\documentclass[12pt, two]{article}
\usepackage{}

\textwidth15cm \textheight22cm
\usepackage{CJK}
\usepackage{amsthm}
\usepackage{amsmath,amssymb,mathrsfs}
\usepackage{amsfonts}
\usepackage{yhmath}
\usepackage{graphics}
\usepackage{amssymb}
\usepackage{epsfig}
\usepackage{latexsym}
\usepackage{indentfirst}
\marginparwidth 100pt \oddsidemargin  0pt \evensidemargin 0pt
\marginparsep 0pt \topmargin -0.8truecm \textheight 23.0truecm
\textwidth 16.0truecm
\usepackage[top=1in, bottom=1in, left=1.25in, right=1.25in]{geometry}
\numberwithin{equation}{section}
\usepackage{CJK}
\allowdisplaybreaks

\newtheorem{theorem}{Theorem}[section]
\newtheorem{lemma}{Lemma}[section]
\newtheorem{proposition}{Proposition}[section]
\newtheorem{corollary}{Corollary}[section]
\newtheorem{remark}{Remark}[section]

\numberwithin{equation}{section} 

\title{
$L^2$ curvature pinching theorems and  vanishing theorems on complete Riemannian manifolds
\footnotetext[0]{2010 Mathematics Subject Classification. Primary: 53C20, 53C21, 53C25. }
\footnotetext[0]{ ${}^*$Supported by NSFC grant No. 11271071 and LMNS, Fudan.}
\footnotetext[0]{ ${}^{**}$Supported by NSFC grant No. 11401099.}
\footnotetext[0]{ ${}^{***}$Supported by NSF Award No DMS-1447008 and the OU Arts and Sciences TAP Fund.}
\footnotetext[0]{{\em Key words and phrases. conformally flat, vanishing theorems, $L^2$ harmonic $p$-forms, ends, Liouville theorems.}}
}

\author{Yuxin Dong$^{*}$, Hezi Lin$^{**}$ and Shihshu Walter Wei$^{***}$}
\date{}
\begin{document}


\maketitle
\begin{abstract}
In this paper, by  using monotonicity formulas for vector bundle-valued $p$-forms satisfying the conservation law, we first obtain general $L^2$  global rigidity theorems  for locally conformally flat (LCF) manifolds with constant  scalar curvature, under curvature pinching  conditions. Secondly, we prove  vanishing results for $L^2$ and some non-$L^2$ harmonic $p$-forms on LCF manifolds, by assuming that the underlying manifolds satisfy  pointwise or integral curvature conditions. Moreover, by a Theorem of Li-Tam for harmonic functions, we show that the underlying manifold must  have only   one end. Finally, we obtain Liouville theorems for $p$-harmonic functions  on LCF manifolds under pointwise Ricci curvature conditions.
\end{abstract}

\section{Introduction}

In the study of Riemannian geometry,  locally conformally flat  manifolds  play an important
role. Let us recall that an $n$-dimensional Riemannian manifold $(M^n, g)$  is said to be locally conformally flat (LCF) if it admits a coordinate covering $\{U_\alpha, \varphi_\alpha\}$ such that the map $\varphi_\alpha: (U_\alpha, g_{\alpha})\rightarrow (S^n, g_0)$ is a conformal map, where $g_0$ is the standard metric on $S^n$.
 A locally conformally flat manifold may be regarded as a higher dimensional generalization of a Riemann surface. But not every higher dimensional manifold admits a locally conformally flat structure, and it is  an interesting problem to give a good classification of locally conformally flat manifolds.
By assuming various geometric situations, many partial classification results have been given. (see, for examples,  \cite{CH, Che, CZ, PRS1, PV, SY, XZ, Zh}, etc.)

In the first part, we use the stress-energy tensor to study the rigidity of LCF manifolds.  In \cite{DW}, the authors   presented a unified method to establish monotonicity formulas
and vanishing theorems for vector-bundled valued $p$-forms satisfying a conservation law,
by means of the stress-energy tensors of various energy functionals in geometry and physics.
 Later, the authors in \cite{DL} established similar monotonicity formulas by using various exhaustion functions. As applications, they  proved the Ricci flatness of a K\"ahler manifold with constant scalar curvature under growth conditions for the Ricci form, and obtained Bernstein type theorems for submanifolds in Euclidean spaces with parallel mean curvature under growth conditions on the second fundamental form.
In this paper, we attempt  to use monotonicity formulas to study rigidity properties of    LCF metric with constant scalar curvature. For these aims, we may interpret the Riemannian (resp. Ricci) curvature tensor as a $2$-form (resp. $1$-form) with values in the bundle of symmetric endomorphisms of $T(M)$ endowed with its canonical structure of Riemannian vector bundle. For   LCF manifolds with constant scalar curvature,    the  $1$-forms  corresponding to the Ricci curvature tensor and to the traceless Ricci curvature tensor also satisfy conservation laws. Hence we can establish monotonicity formulas for those one forms,  from which $L^2$ curvature pinching theorems are deduced.

On the other hand,
it is an interesting problem in geometry and topology to find sufficient conditions
on a LCF manifold $M$ for the vanishing  of  harmonic forms.  When $M$ is compact, the Hodge theory states that the space of harmonic $p$-forms on $M$ is isomorphic to its $p$-th de Rham cohomology group.
In \cite{Bo}, Bourguignon  proved that  a compact, $2m$-dimensional, LCF manifold of positive scalar curvature has no non-zero harmonic $m$-forms, hence its $m$-th Betti number $\beta_m=0$.
 Guan, Lin and Wang \cite{GLW} obtained a cohomology vanishing theorem on compact LCF manifolds under a positivity assumption on the Schouten tensor. For the non-compact case, the Hodge theory is no longer true in general.  However, it is known that $L^2$ Hodge theory remains valid for complete non-compact manifolds.  Hence it is important to investigate $L^2$ harmonic forms.  In \cite{PRS2}, Pigola,  Rigoli and  Setti  showed a vanishing result for bounded harmonic forms of middle degree on complete  non-compact LCF manifolds, by adding suitable conditions on scalar curvature and volume growth.
 In \cite{Li}, Lin proved some vanishing and finiteness theorems for $L^2$ harmonic $1$-forms on complete non-compact LCF manifolds under integral  curvature pinching conditions.

 Since the Riemannian curvature of a LCF manifold can be expressed by its Ricci curvature and scalar curvature, we can compute explicitly the Weitzenb\"{o}ck formula for harmonic $p$-forms. Based on this  formula, together with $L^2$-Sobolev inequality or weighted Poincar\'{e}
inequality, we shall establish vanishing results for $L^2$ harmonic $p$-forms under various $L^n$-integral  curvature or pointwise curvature pinching conditions. In particular, we show that if the  Ricci tensor is sufficiently near  zero in the integral sense, then $H^p(L^2(M))= \{0\}$ for all $0 \leq p \leq n$, where $H^p(L^2(M))$ denote the space of all $L^2$ harmonic $p$-forms on  $M$. Moreover,  according to the nonexistence of nontrivial $L^2$ harmonic $1$-forms, we  deduce that $M$ has only one end by Li-Tam's harmonic functions theory.

Finally we also consider $p$-harmonic functions on LCF manifolds. When the scalar curvature of a  LCF manifold is negative,  it is known that a weighted   Poincar\'{e} inequality holds. Hence we can use the results of Chang-Chen-Wei \cite{CCW} to derive some Liouville theorems for $p$-harmonic functions, by assuming pointwise Ricci curvature bounds.

\section{Preliminaries}
Let $(M, g)$ be a complete manifold of dimension $n\geq 3$. Let $R_{ijkl}$ and $W_{ijkl}$ denote respectively the components of the Riemannian curvature tensor and  the Weyl curvature tensor  of $(M, g)$.  A fundamental result in Riemannian geometry is that
\begin{align}
\nonumber W_{ijkl}=&  R_{ijkl}- \frac{1}{n-2}(R_{ik}\delta_{jl}- R_{il}\delta_{jk} + R_{jl}\delta_{ik} - R_{jk}\delta_{il})\\
&+ \frac{R}{(n-1)(n-2)}(\delta_{ik}\delta_{jl}- \delta_{il}\delta_{jk}), \label{Weyl}
\end{align}
where $R_{ik}$ and $R$ denote the Ricci tensor and the scalar curvature respectively.  The associated Schouten tensor $A$ with respect to $g$ is defined by
\begin{equation*}
A:=\frac{1}{n-2} \left(\mbox{Ric}- \frac{R}{2(n-1)}g \right).
\end{equation*}
 It is well known that
 if $n = 3$, then $W_{ijkl}=0$, and $(M^3, g)$ is locally conformally flat if and only if
the Schouten tensor  is  Codazzi, i.e., $A_{ik,j} - A_{ij,k}=0$, where $A_{ij}$ is the components of  the Schouten tensor $A$. If $n  \geq 4$, then $(M^n, g)$ is locally conformally flat if and only if the Weyl tensor  vanishes, i.e., $W_{ijkl}=0$.
The local conformal flatness and the equation (\ref{Weyl}) yield
\begin{align}
\nonumber R_{ijkl}=&  \frac{1}{n-2}(R_{ik}\delta_{jl}- R_{il}\delta_{jk} + R_{jl}\delta_{ik} - R_{jk}\delta_{il})
-\frac{R}{(n-1)(n-2)}(\delta_{ik}\delta_{jl}\\
&- \delta_{il}\delta_{jk}). \label{LCF-decomposition}
\end{align}
Thus, a locally conformally flat manifold has
 constant sectional curvature if and only if it is Einstein, that is, $\mbox{Ric}=\frac{R}{n}g$.
  As a consequence, by the Hopf classification theorem, space forms are the only locally conformally flat Einstein manifolds.

Suppose $R$ is constant, by (2.1) and the second Bianchi identities,  we immediately obtain that the Ricci tensor is  Codazzi, that is,
$R_{ij,k}= R_{ik,j}$.
Therefore, the traceless Ricci tensor
$E= \mbox{Ric} - \frac{R}{n}g$
is  Codazzi too.

In order  to get vanishing results for $L^2$ harmonic $p$-forms on LCF manifols, we  need the following
$L^2$-Sobolev inequality. It is known that a  simply connected, LCF manifold $M^{n}$ ($n\geq 3$)  has a conformal immersion into $\mathbb{S}^{n}$, and according to \cite{SY}, the Yamabe constant of $M^n$ satisfies $Q(M^n)=Q(\mathbb{S}^n)= \frac{n(n-2)\omega_n^{\frac{2}{n}}}{4}$, where $\omega_n$ is the volume of the unit sphere in $\mathbb{R}^n$. Therefore  the following inequality
\begin{equation}
Q(\mathbb{S}^n)\left(\int_M f^{\frac{2n}{n-2}}dv\right)^{\frac{n-2}{n}}  \leq  \int_M |\nabla f|^2dv +
            \frac{n-2}{4(n-1)}\int_M Rf^2dv \label{Yamabe}
\end{equation}
holds for all $f \in C_0^{\infty}(M)$. If we assume  $R \leq 0$, then it follows that
\begin{equation}
Q(\mathbb{S}^n)\left(\int_M f^{\frac{2n}{n-2}}dv\right)^{\frac{n-2}{n}}  \leq  \int_M |\nabla f|^2dv, \ \ \forall f \in C_0^{\infty}(M). \label{R-negative1}
\end{equation}
On the other hand, if
$\int_M |R|^{\frac n 2}dv < \infty$,
then we can choose a compact set $\Omega\subset M $  large enough such that
\begin{equation*}
\left(\int_{M\setminus \Omega} |R|^{\frac n 2}dv\right)^{\frac 2n} \leq \frac{4\epsilon(n-1) Q(\mathbb{S}^n)}{n-2}
\end{equation*}
for some $\epsilon$ satisfying $0< \epsilon< 1$.
By the H\"{o}lder inequality, the term involving the scalar curvature can be
absorbed into the left-hand side of (\ref{Yamabe}) to yield
\begin{equation*}
(1-\epsilon)Q(\mathbb{S}^n)\left(\int_{M\setminus \Omega} f^{\frac{2n}{n-2}}dv\right)^{\frac{n-2}{n}}  \leq  \int_{M\setminus \Omega} |\nabla f|^2dv,   \ \ \forall  f \in C^\infty_0(M\setminus \Omega).
\end{equation*}
From the work of G. Carron \cite{Ca}  (one can also consult Theorem 3.2 of \cite{PST}), the following $L^2$-Sobolev inequality
\begin{equation}
C_s\left(\int_{M} f^{\frac{2n}{n-2}}dv\right)^{\frac{n-2}{n}}  \leq  \int_{M} |\nabla f|^2dv, \ \ \forall  f \in C^\infty_0(M) \label{finite-scalar}
\end{equation}
holds  for some uniform constant $C_s>0$, which implies a uniform lower bound on the volume of geodesic balls
\begin{equation}
\mbox{vol}(B_x(\rho))\geq C \rho^n,\ \ \forall x\in M \label{v-growth}
\end{equation}
for some constant $C>0$. Therefore, each end of $M$ has infinite volume.

\section{Monotonicity formulas for curvature tensor and vanishing results}
Let $(M, g)$  be a Riemannian manifold and $\xi:E\rightarrow M$ be a smooth Riemannian vector bundle
   over $M$   with compatible connection $\nabla^E$. Set $A^p(\xi)=\Gamma (\Lambda^pT^*M \otimes E)$ the space of smooth
   $p$-forms on $M$ with values in the vector bundle $\xi:E\rightarrow M$. The exterior covariant
   differentiation $d^\nabla:A^p(\xi)\rightarrow A^{p+1}(\xi)$ relative to  $\nabla^E$ is defined by
   \begin{equation*}
   (d^\nabla\omega)(X_1,\cdots,X_{p+1})=\overset{p+1}{\underset{i=1}{\sum}}(-1)^{i+1}(\nabla_{X_i}
   \omega)(X_1,\cdots,\widehat{X_i},\cdots,X_{p+1}).
   \end{equation*}
   The codifferential operator $\delta^\nabla:A^p(\xi)\rightarrow A^{p-1}(\xi)$ characterized as the adjoint of $d^\nabla$ if $M$ is compact or $\omega$ has a compact support, and is defined by
 \begin{equation*}
(\delta^{\nabla}\omega)(X_1,\cdots,X_{p-1})=-\underset{i}{\sum}(\nabla_{e_i}\omega)(e_i,X_1,\cdots,X_{p-1}),
\end{equation*}
where $\{e_i\}$ is an orthonormal basis of $TM$.
The energy functional of  $\omega\in A^p(\xi)$ is defined by
   $E(\omega)=\frac{1}{2}\int_M |\omega|^2dv_g$.
  Its stress-energy tensor is
\begin{equation}
S_{\omega} (X,Y)= \frac{|\omega|^2}{2}g(X,Y)-(\omega \odot \omega)(X,Y), \label{stress-energy}
\end{equation}
where $\omega \odot \omega\in \Gamma(A^p(\xi)\otimes A^p(\xi)) $ is a symmetric tensor defined by
\begin{equation}
(\omega \odot \omega)(X,Y)=\langle i_X\omega,i_Y\omega \rangle. \label{symmetric-tensor}
\end{equation}
Here $i_X\omega\in A^{p-1}(\xi)$ denotes the interior multiplication by $X\in \Gamma(TM)$.
 The divergence of $S_{\omega}$ is given by (cf. \cite{Xi, Ba})
\begin{equation}
(\mbox{div}S_\omega)(X)=\langle \delta^\nabla\omega,i_X\omega \rangle + \langle i_Xd^\nabla\omega,\omega \rangle.
\label{divergence}
\end{equation}
We say that a $2$-tensor field $T\in \Gamma(T^*M\otimes T^*M)$ is a Codazzi tensor if $T$ satisfies
\begin{equation*}
(\nabla_Z T)(X,Y)=(\nabla_YT)(X,Z)
\end{equation*}
for any vector field $X$, $Y$ and $Z$. One may regard $T \in \Gamma(T^*M \otimes T^*M)$ as a $1$-form $\omega_T$ with values in $T^*M$ as follows
\begin{equation}
\omega_T(X)=T(\cdot, X), \label{1-form}
\end{equation}
that is, $\omega_T \in A^1(T^*M)$. Note that the covariant derivative of $\omega_T$ is given by
\begin{align}
\nonumber \big ((\nabla_X\omega_T)(Y)\big )(e) =& \bigg (\nabla_X\big (\omega_T(Y)\big )- \omega_T(\nabla_XY)\bigg )(e)\\
\nonumber =& \bigg (\nabla_X\big ( \omega_T(Y)\big )\bigg )(e)  - T(e, \nabla_XY)\\
\nonumber =& \nabla_X\big (\omega_T(Y)(e)\big ) - \omega_T(Y)(\nabla_Xe) - T(e, \nabla_XY)\\
\nonumber =& \nabla_X \big (T(e, Y)\big ) - T(\nabla_Xe, Y) - T(e, \nabla_XY)\\
=& (\nabla_X T)(e, Y) \label{CT}
\end{align}
for any $X$, $Y\in \Gamma(TM)$ and $e \in T_xM$. Therefore $T$ is a Codazzi tensor if and only if
\begin{equation}
(\nabla_X \omega_T)(Y) = (\nabla_Y \omega_T)(X). \label{Codazzi-tensor}
\end{equation}
\begin{lemma}
The $2$-tensor field $T$ is a Codazzi tensor if and only if $d^\nabla \omega_T =0$.
\end{lemma}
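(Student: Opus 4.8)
The plan is to reduce the claim to a one-line comparison between the definition of $d^\nabla$ specialized to degree one and the reformulation of the Codazzi property already obtained in (\ref{Codazzi-tensor}). The computation (\ref{CT}) has shown that $(\nabla_X\omega_T)(Y)$ coincides, as an element of $T^*_xM$, with $(\nabla_X T)(\cdot,Y)$; consequently $T$ being Codazzi, i.e.\ $(\nabla_Z T)(X,Y)=(\nabla_Y T)(X,Z)$, is equivalent to the identity (\ref{Codazzi-tensor}), namely $(\nabla_X\omega_T)(Y)=(\nabla_Y\omega_T)(X)$ for all $X,Y$. So the substance of the lemma is to see that the antisymmetrized quantity appearing in (\ref{Codazzi-tensor}) is precisely what controls $d^\nabla\omega_T$.

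Next I would specialize the defining formula for the exterior covariant derivative to $p=1$ with $\omega=\omega_T$. Reading off the two terms $i=1,2$ with signs $(-1)^{i+1}$ gives
\begin{equation*}
(d^\nabla\omega_T)(X,Y)=(\nabla_X\omega_T)(Y)-(\nabla_Y\omega_T)(X).
\end{equation*}
Thus $d^\nabla\omega_T=0$ holds if and only if $(\nabla_X\omega_T)(Y)=(\nabla_Y\omega_T)(X)$ for every pair of vector fields $X,Y$, which is exactly (\ref{Codazzi-tensor}). Combining this with the equivalence between (\ref{Codazzi-tensor}) and the Codazzi condition established via (\ref{CT}) closes the argument in both directions.

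There is essentially no analytic obstacle here: the only point requiring care is the bookkeeping of signs and of the slot that is being contracted. In particular one must be consistent about which argument of $T$ becomes the free slot of $\omega_T$ under the identification (\ref{1-form}), and must track that the alternation built into $d^\nabla$ reproduces the difference $(\nabla_X\omega_T)(Y)-(\nabla_Y\omega_T)(X)$ without introducing an extra sign. Once the degree-one formula for $d^\nabla$ is written out explicitly, the equivalence is immediate and holds pointwise, so no completeness or integrability hypothesis on $M$ is required.
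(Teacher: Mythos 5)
Your proof is correct and follows exactly the paper's own argument: specialize the definition of $d^\nabla$ to degree one to get $(d^\nabla\omega_T)(X,Y)=(\nabla_X\omega_T)(Y)-(\nabla_Y\omega_T)(X)$, then invoke the equivalence between the Codazzi condition and (\ref{Codazzi-tensor}) already established via the computation (\ref{CT}). Nothing further is needed.
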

\begin{proof}
By the definition of $d^\nabla$, we have
\begin{equation*}
(d^\nabla \omega_T)(X, Y) = (\nabla_X \omega_T)(Y)- (\nabla_Y \omega_T)(X), \ \ \forall X, Y \in TM.
\end{equation*}
Thus,  for any $X, Y \in TM$, $(\nabla_X \omega_T)(Y)=(\nabla_Y \omega_T)(X)$ is equivalent to $d^\nabla \omega_T=0$.
\end{proof}
\begin{remark}
There are many well-known examples of Codazzi tensors. These include any constant scalar multiple of the metric, and more generally any parallel self-adjoint $(1,1)$ tensor, such as the second fundamental form of submanifolds with parallel mean curvature in a space of constant sectional curvature. Furthermore, the Ricci tensor of a Riemannian manifold $M$ is Codazzi if and only if the curvature tensor of $M$ is harmonic. This is the case, for example, $M$
is an Einstein manifold.
\end{remark}

 Now we compute the codifferentiation of $\omega_T$. Choose an orthonormal frame field $\{ e_i\}^n_{i=1}$ around a point $x \in M$
 such that $(\nabla e_i)_x =0$. By (\ref{CT}), one gets
 \begin{equation}
\delta^{\nabla} \omega_T = - \underset{i=1}{\overset{n}{\sum}} (\nabla_{e_i}\omega_T)( e_i)
         = -\underset{i=1}{\overset{n}{\sum}} (\nabla_{e_i}T)(\cdot, e_i). \label{delta}
\end{equation}
\begin{lemma}
Let $T$ be a symmetric Codazzi $2$-tensor field. If $tr T$ is constant, then $\delta^\nabla \omega_T =0$.
\end{lemma}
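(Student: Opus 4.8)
The plan is to evaluate the $1$-form $\delta^\nabla\omega_T$ on an arbitrary tangent vector and to rewrite the resulting contraction as the derivative of $\mathrm{tr}\,T$, which vanishes by hypothesis. Concretely, I would fix a point $x\in M$, work with the normal orthonormal frame $\{e_i\}$ satisfying $(\nabla e_i)_x=0$ used above, and start from the formula $\delta^\nabla\omega_T=-\sum_{i=1}^n(\nabla_{e_i}T)(\cdot,e_i)$ of (\ref{delta}). Applying this to an arbitrary $X\in T_xM$ gives $(\delta^\nabla\omega_T)(X)=-\sum_i(\nabla_{e_i}T)(X,e_i)$, and the whole point is to turn each summand into $(\nabla_XT)(e_i,e_i)$.

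The transformation proceeds in two elementary moves. First, because $T$ is symmetric its covariant derivative $\nabla_{e_i}T$ is again symmetric in its two arguments, so $(\nabla_{e_i}T)(X,e_i)=(\nabla_{e_i}T)(e_i,X)$. Second, I would invoke the Codazzi identity in the displayed form $(\nabla_ZT)(X,Y)=(\nabla_YT)(X,Z)$, which fixes the first slot and interchanges the differentiation direction with the second slot; applying it with first slot equal to $e_i$, differentiation direction $Z=e_i$ and $Y=X$ yields $(\nabla_{e_i}T)(e_i,X)=(\nabla_XT)(e_i,e_i)$. Chaining the two identities gives $(\nabla_{e_i}T)(X,e_i)=(\nabla_XT)(e_i,e_i)$.

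It then remains to sum over $i$. Since $(\nabla e_i)_x=0$, the sum $\sum_i(\nabla_XT)(e_i,e_i)$ equals $\nabla_X(\mathrm{tr}\,T)$ at $x$, so that $(\delta^\nabla\omega_T)(X)=-\nabla_X(\mathrm{tr}\,T)$. Because $\mathrm{tr}\,T$ is constant this derivative is zero, and as $X$ and $x$ were arbitrary we conclude $\delta^\nabla\omega_T=0$.

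I do not expect any serious obstacle here: the computation is short and purely algebraic once (\ref{delta}) is in hand. The only point that demands care is the bookkeeping in the second move. The Codazzi relation as stated keeps the \emph{first} argument fixed and swaps the other two, so one must first exploit the symmetry of $T$ to bring $X$ into the second slot before the Codazzi identity can be applied to produce the trace derivative. Getting the slots into the correct positions is the single place where a sign or index slip could occur.
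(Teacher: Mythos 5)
Your proof is correct and follows essentially the same route as the paper: evaluate $\delta^\nabla\omega_T$ on $X$ via (\ref{delta}), convert $(\nabla_{e_i}T)(X,e_i)$ into $(\nabla_XT)(e_i,e_i)$, and identify the sum with $X(\operatorname{tr}T)=0$. Your explicit intermediate use of the symmetry of $T$ to move $X$ into the second slot before invoking the Codazzi identity is a point the paper's one-line justification ("Since $T$ is Codazzi") leaves implicit, so your version is if anything slightly more careful.
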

\begin{proof}
Note that $\delta^\nabla \omega_T \in \Gamma(T^*M)$. For any vector $X \in TM$, we get from (\ref{delta}) that
\begin{align*}
(\delta^\nabla \omega_T)(X) =& - \underset{i=1}{\overset{n}{\sum}}(\nabla_{e_i}T)(X, e_i).
\end{align*}
Since $T$ is Codazzi, it follows that
\begin{align*}
(\delta^\nabla \omega_T)(X) = - \underset{i=1}{\overset{n}{\sum}}(\nabla_{X}T)(e_i, e_i)
= - X (\underset{i=1}{\overset{n}{\sum}} T(e_i, e_i))=0.
\end{align*}
\end{proof}

Therefore, by (\ref{1-form}), Lemma 3.1 and Lemma 3.2, we have the following proposition:
\begin{proposition}\label{pro-3.1}
Suppose $T$ is a symmetric Codazzi 2-tensor with constant trace. Then $\omega_T$ satisfies a conservation law, that is, $\mbox{div}S_{\omega_T} =0$ as defined in $(\ref{divergence})$.
\end{proposition}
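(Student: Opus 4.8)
The plan is to feed $\omega_T$ directly into the divergence identity (\ref{divergence}) and then kill both terms on the right-hand side using the two lemmas just established. Concretely, for an arbitrary vector field $X$, formula (\ref{divergence}) applied to $\omega = \omega_T$ reads
\begin{equation*}
(\mbox{div}S_{\omega_T})(X) = \langle \delta^\nabla \omega_T, i_X \omega_T \rangle + \langle i_X d^\nabla \omega_T, \omega_T \rangle.
\end{equation*}
So the entire matter reduces to showing that each of these two inner products is zero, for which the hypotheses on $T$ have been tailored precisely.

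First I would dispatch the second summand. Since $T$ is assumed to be a Codazzi tensor, Lemma 3.1 gives $d^\nabla \omega_T = 0$; hence $i_X d^\nabla \omega_T = 0$, and the term $\langle i_X d^\nabla \omega_T, \omega_T \rangle$ vanishes identically. Next I would dispose of the first summand: because $T$ is symmetric, Codazzi, and has constant trace, Lemma 3.2 yields $\delta^\nabla \omega_T = 0$, so $\langle \delta^\nabla \omega_T, i_X \omega_T \rangle = 0$ as well. Combining the two observations gives $(\mbox{div}S_{\omega_T})(X) = 0$ for every $X$, and therefore $\mbox{div}S_{\omega_T} = 0$, which is exactly the asserted conservation law.

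I do not anticipate a genuine obstacle in this proposition itself, since it is a clean assembly of the two preceding lemmas through the divergence formula. The substantive work has already been carried out upstream: the identification (\ref{CT}) of $(\nabla_X \omega_T)(Y)$ with $(\nabla_X T)(e, Y)$, which translates the Codazzi property of $T$ into the closedness $d^\nabla \omega_T = 0$ (Lemma 3.1), and the constant-trace hypothesis, which forces the coclosedness $\delta^\nabla \omega_T = 0$ (Lemma 3.2). The one point I would be mildly careful about is simply that the divergence formula (\ref{divergence}) is a pointwise identity valid for all $X$, so that vanishing of both scalar terms for every $X$ legitimately yields the vanishing of the tensor $\mbox{div}S_{\omega_T}$ as a $1$-form; this is immediate but worth stating to close the argument cleanly.
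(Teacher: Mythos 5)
Your proposal is correct and is exactly the argument the paper intends: the proposition is stated as an immediate consequence of Lemma 3.1 (Codazzi $\Leftrightarrow d^\nabla\omega_T=0$) and Lemma 3.2 (constant trace $\Rightarrow \delta^\nabla\omega_T=0$) plugged into the divergence identity (\ref{divergence}). Nothing is missing.
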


For any given vector field $X$, there corresponds to a  dual one form $\theta_X$ such that
\begin{equation*}
\theta_X(Y)=g(X,Y), \ \ \forall\,  Y\in \Gamma(TM).
\end{equation*}
The covariant derivative of $\theta_X $ gives a 2-tensor field $\nabla\theta_X$:
\begin{equation*}
(\nabla\theta_X)(Y,Z)=(\nabla_Z\theta_X)(Y)=g(\nabla_Z X,Y), \ \  \forall\, Y,Z\in TM.
\end{equation*}
If $X=\nabla\psi$ is the gradient of some  smooth function $\psi$ on $M$, then $\theta_X=d\psi$ and $\nabla\theta_X=\mbox{Hess}(\psi)$.
A direct computation yields (cf. \cite{Xi} or Lemma 2.4 of \cite{DW}):
\begin{equation}
\mbox{div}(i_XS_{\omega})=\langle S_{\omega},\nabla\theta_X \rangle+(\mbox{div} S_{\omega})(X), \ \ \forall\, X \in TM. \label{divergence-2}
\end{equation}
Let $D$ be any bounded domain of $M$ with $C^1$ boundary. By (\ref{divergence-2}) and using the divergence theorem, we immediately have
\begin{equation}
\int_{\partial D}S_{\omega}(X,\nu)ds_g=\int_{D}\big (\langle S_{\omega},\nabla \theta_X \rangle +(\mbox{div} S_{\omega})(X)\big )dv_g,
\end{equation}
where $\nu$ is the unit outward normal vector field along $\partial D$. In particular, if $\omega$ satisfies the conservation
law, i.e. $\mbox{div} S_{\omega}=0$,
 then
\begin{equation}
\int_{\partial D}S_{\omega}(X,\nu)ds_g= \int_{D}\langle S_{\omega},\nabla \theta_X \rangle dv_g. \label{integral-formula}
\end{equation}

Let $r(x)$ be the geodesic distance function of $x$ relative to some fixed point $x_0$ and $B_{x_0}(r)$ be the geodesic ball centered at $x_0$
with radius $r$. Denote by $\lambda_1(x)\leq \lambda_2(x)\leq \cdots \leq  \lambda_n(x)$  the eigenvalues of $\mbox{Hess}(r^2)$.
Let
\begin{equation}
\tau(p)=\frac{1}{2}\underset{x\in M}{\inf}\{\lambda_1(x)+\cdots+\lambda_{n-p}(x)-\lambda_{n-p+1}(x)-\cdots - \lambda_n(x)\}
\end{equation}
be a function depending  only on the integer $p$, $1 \leq p \leq n$.
\begin{proposition}
\label{pro-monot}
 Let $(M, g)$ be an $n$-dimensional complete Riemannian manifold
 and let $\xi:E \rightarrow M$ be a Riemannian vector bundle on $M$.  If $\tau(p)>0$ and $\omega\in A^p(\xi)$ satisfies the conservation law,
that is, $\mbox{div} S_{\omega}=0$, then
\begin{equation}
\frac{1}{\rho_1^ {\sigma}} \int_{B_{x_0}(\rho_1)}
|\omega|^2dv \leq \frac{1}{\rho_2^{\sigma}}
\int_{B_{x_0}(\rho_2)}|\omega|^2dv
\end{equation}
for any $0<\rho_1 \leq \rho_2$ and $ 0< \sigma \leq \tau(p)$.
\end{proposition}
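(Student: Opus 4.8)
The plan is to test the integral formula (\ref{integral-formula}) against the radial vector field $X=\tfrac12\nabla r^2=r\nabla r$ on the geodesic balls $D=B_{x_0}(\rho)$, and then to integrate the resulting differential inequality for the function $g(\rho):=\int_{B_{x_0}(\rho)}|\omega|^2\,dv$. Since $\omega$ satisfies the conservation law $\mbox{div}\,S_\omega=0$, formula (\ref{integral-formula}) reads $\int_{\partial B_{x_0}(\rho)}S_\omega(X,\nu)\,ds=\int_{B_{x_0}(\rho)}\langle S_\omega,\nabla\theta_X\rangle\,dv$, where $\nabla\theta_X=\tfrac12\mbox{Hess}(r^2)$. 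On $\partial B_{x_0}(\rho)$ one has $\nu=\nabla r$ and $X=\rho\,\nu$, so by (\ref{stress-energy}) and (\ref{symmetric-tensor}) we get $S_\omega(X,\nu)=\rho\big(\tfrac{|\omega|^2}{2}-|i_\nu\omega|^2\big)\le\tfrac{\rho}{2}|\omega|^2$; this controls the boundary term from above by $\tfrac{\rho}{2}\int_{\partial B_{x_0}(\rho)}|\omega|^2\,ds$.

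The heart of the argument is a pointwise lower bound for the interior integrand. Fix $x$ and choose an orthonormal frame $\{e_i\}$ diagonalizing $\mbox{Hess}(r^2)$ with eigenvalues $\lambda_1\le\cdots\le\lambda_n$. Then $\langle S_\omega,\nabla\theta_X\rangle=\tfrac12\sum_{i}\lambda_i\,S_\omega(e_i,e_i)=\tfrac12\sum_i\lambda_i\big(\tfrac{|\omega|^2}{2}-|i_{e_i}\omega|^2\big)$. Writing $a_i:=|i_{e_i}\omega|^2$, the algebraic relations for $p$-forms, $\sum_i a_i=p|\omega|^2$ and $0\le a_i\le|\omega|^2$, hold. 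Since the $\lambda_i$ are increasing, the quantity $\sum_i\lambda_i(\tfrac{|\omega|^2}{2}-a_i)$ is minimized by concentrating the weight $a_i=|\omega|^2$ on the $p$ largest eigenvalues, which yields $\langle S_\omega,\nabla\theta_X\rangle\ge\tfrac{|\omega|^2}{4}\big[(\lambda_1+\cdots+\lambda_{n-p})-(\lambda_{n-p+1}+\cdots+\lambda_n)\big]\ge\tfrac{\tau(p)}{2}|\omega|^2$, by the definition of $\tau(p)$. I expect this constrained optimization, which uses precisely the ordering of the eigenvalues together with the degree-$p$ normalization $\sum_i a_i=p|\omega|^2$, to be the main obstacle; everything else is bookkeeping.

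Combining the two estimates yields $\tau(p)\,g(\rho)\le\rho\,g'(\rho)$, where $g'(\rho)=\int_{\partial B_{x_0}(\rho)}|\omega|^2\,ds$ by the coarea formula. For any $0<\sigma\le\tau(p)$ this gives $\tfrac{d}{d\rho}\log\!\big(g(\rho)/\rho^\sigma\big)=\tfrac{g'(\rho)}{g(\rho)}-\tfrac{\sigma}{\rho}\ge0$, so $\rho\mapsto g(\rho)/\rho^\sigma$ is nondecreasing and the claimed inequality follows at once. The only technical subtlety is the failure of $r$ to be smooth across the cut locus; this is handled in the usual way, since the cut locus has measure zero and $\mbox{Hess}(r^2)$ may be interpreted in the barrier sense, so that the pointwise estimate, and hence the integral inequality, persist almost everywhere.
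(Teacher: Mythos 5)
Your proposal is correct and follows essentially the same route as the paper: the vector field $X=r\nabla r$ in the integral formula, the upper bound $S_\omega(X,\nu)\le \tfrac{\rho}{2}|\omega|^2$ on the boundary, the eigenvalue lower bound $\langle S_\omega,\nabla\theta_X\rangle\ge\tfrac{|\omega|^2}{4}(\lambda_1+\cdots+\lambda_{n-p}-\lambda_{n-p+1}-\cdots-\lambda_n)$ in the interior, and integration of the resulting logarithmic differential inequality. The only cosmetic difference is that you obtain the key estimate by a constrained optimization over $a_i=|i_{e_i}\omega|^2$ with $\sum_i a_i=p|\omega|^2$ and $0\le a_i\le|\omega|^2$, whereas the paper expands $\sum_i(\omega\odot\omega)(e_i,e_i)\lambda_i$ over multi-indices and bounds each $\sum_{j=1}^p\lambda_{i_j}$ by the sum of the $p$ largest eigenvalues; these are the same inequality.
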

\begin{proof}
The proof is similar to that of \cite{DW}. We will provide the argument
here for completeness of the paper.
Take a smooth vector field $X= r\nabla r$ on $M$.
Obviously, $\frac{\partial}{\partial r}$ is an outward unit normal vector field
along $\partial B_{x_0}(r)$.
Take an orthonormal  basis $\{e_i\}_{i=1}^n$ which diagonalizes  $\mbox{Hess}(r^2)$,
then
\begin{align}
\nonumber \langle S_\omega,\nabla\theta_X \rangle =& \frac{1}{2}\overset{n}{\underset{i,j=1}{\sum}}
 S_\omega(e_i,e_j)\mbox{Hess}(r^2)(e_i,e_j)\\
\nonumber =& \frac{1}{4} \overset{n}{\underset{i,j=1}{\sum}}|\omega|^2 \mbox{Hess}(r^2)(e_i,e_j)\delta_{ij}
  -\frac{1}{2}\overset{n}{\underset{i,j=1}{\sum}}(\omega \odot \omega)(e_i,e_j) \mbox{Hess}(r^2)(e_i,e_j)\\
  =& \frac{|\omega|^2}{4} \overset{n}{\underset{i=1}{\sum}} \lambda_i
  -\frac{1}{2}\overset{n}{\underset{i=1}{\sum}}(\omega \odot \omega)(e_i,e_i) \lambda_i. \label{SW}
\end{align}
For the second term, by (\ref{symmetric-tensor}), we have
\begin{align*}
\overset{n}{\underset{i=1}{\sum}}(\omega \odot \omega)(e_i,e_i)\lambda_i
=& \overset{n}{\underset{s=1}{\sum}}\langle i_{e_s}\omega, i_{e_s}\omega \rangle \lambda_{s}\\
=& \frac{1}{p!}\overset{p}{\underset{j=1}{\sum}}\overset{}{\underset{i_1,\cdots, i_p}{\sum}}\langle \omega(e_{i_1}, \cdots), \omega(e_{i_1}, \cdots) \rangle \lambda_{i_j}\\
\leq& \frac{1}{p!}\overset{}{\underset{i_1,\cdots, i_p}{\sum}}\langle \omega(e_{i_1}, \cdots), \omega(e_{i_1}, \cdots) \rangle \overset{n}{\underset{j=n-p+1}{\sum}} \lambda_{j}\\
=& |\omega|^2 \overset{n}{\underset{j=n-p+1}{\sum}} \lambda_{j}.
\end{align*}
Substituting into (\ref{SW}), it follows that
\begin{align}
 \langle S_\omega,\nabla\theta_X \rangle
 \geq \frac{|\omega|^2}{4}(\lambda_1+\cdots+\lambda_{n-p}-\lambda_{n-p+1}-\cdots - \lambda_n). \label{right}
\end{align}
By the definition of $S_{\omega}$, we have
\begin{align}
\nonumber S_{\omega}(X,\frac{\partial}{\partial r})=& \frac{|\omega|^2}{2}g(X,\frac{\partial}{\partial r})-(\omega \odot \omega)(X,\frac{\partial}{\partial r})\\
 \nonumber =& \frac{1}{2}r|\omega|^2g(\frac{\partial}{\partial r},\frac{\partial}{\partial r})
-\frac{1}{2}r |i_{\frac{\partial}{\partial r}}\omega |^2\\
\leq& \frac{r|\omega|^2}{2}   \quad  \mbox{on} \ \partial B_{x_0}(r). \label{left}
 \end{align}
Since  $\mbox{div}S_{\omega}=0$,  we get from (\ref{integral-formula}), (\ref{right}) and (\ref{left}) that
 $$\frac{1}{2}\underset{x\in M}{\inf}(\lambda_1+\cdots+\lambda_{n-p}-\lambda_{n-p+1}-\cdots - \lambda_n) \int_{ B_{x_0}(r)} |\omega|^2 dv \leq  r\int_{\partial B_{x_0}(r)} |\omega|^2 dv.$$
Using co-area formula, we have
\begin{eqnarray*}
\tau(p) \int_{B_{x_0}(r)}|\omega|^2dv  \leq r\frac{d}{dr} \int_{B_{x_0}(r)}|\omega|^2dv,
\end{eqnarray*}
thus
\begin{eqnarray*}
 \frac{\frac{d}{dr} \int_{B_{x_0}(r)}|\omega|^2dv}
        {\int_{B_{x_0}(r)}|\omega|^2dv}\geq
 \frac{\sigma}{r}
\end{eqnarray*}
for any $\sigma \leq \tau(p)$. Integrating the above formula on $[\rho_1,\rho_2]$ yields
  $$\frac{1}{\rho_1^ { \sigma}} \int_{B_{x_0}(\rho_1)}
|\omega|^2dv \leq \frac{1}{\rho_2^{\sigma}}
\int_{B_{x_0}(\rho_2)}|\omega|^2dv.$$
\end{proof}

In the following, we shall use Proposition \ref{pro-monot} to deduce  monotonicity formulas and vanishing results for  the curvature tensor of LCF manifolds.  For this purpose, we  collect the following Lemmas.

\begin{lemma} \label{lem-Hessian} $(\cite{GW, DW, HLRW})$
Let $(M, g)$ be a complete Riemannian manifold with a pole  $x_0$ and let $r$ be the distance function relative to $x_0$.
Denote by $K$ the radial curvature of $M$.

$(i)$ If $-\frac{A}{(1+r^2)^{1+\epsilon}} \leq K \leq \frac{B}{(1+r^2)^{1+\epsilon}}$ with $\epsilon>0$, $A \geq 0$, $0\leq B < 2\epsilon$,
then
$$\frac{1-\frac{B}{2\epsilon}}{r} [g-dr \otimes dr] \leq \mbox{Hess}(r) \leq \frac{e^{\frac{A}{2\epsilon}}}{r}[g-dr \otimes dr]. $$

$(ii)$ If $-\frac{a^2}{1+r^2} \leq K \leq \frac{b^2}{1+r^2}$ with $a\geq 0$, $b^2 \in[0,1/4]$, then
$$\frac{1+\sqrt{1-4b^2}}{2r} [g-dr \otimes dr] \leq \mbox{Hess}(r) \leq \frac{1+\sqrt{1+4a^2}}{2r}[g-dr \otimes dr].$$

$(iii)$ If $-\alpha^2\leq K \leq -\beta^2$ with $\alpha> 0$, $\beta> 0$, then
$$\beta \coth(\beta r)[g-dr \otimes dr] \leq \mbox{Hess}(r) \leq \alpha \coth(\alpha r)[g-dr \otimes dr].$$

\end{lemma}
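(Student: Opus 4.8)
The plan is to reduce the three pointwise Hessian estimates to one–dimensional comparison by invoking the standard Hessian comparison theorem for the distance function from a pole. Since $x_0$ is a pole, $r$ is smooth on $M\setminus\{x_0\}$, and along each unit–speed radial geodesic the restriction of $\mbox{Hess}(r)$ to $(\nabla r)^\perp$ coincides with the shape operator of the geodesic spheres, which obeys a matrix Riccati equation driven by the radial curvature $K$. Thus if $K \leq G_2(r)$ then $\mbox{Hess}(r) \geq (f_2'/f_2)\,[g - dr\otimes dr]$, and if $K \geq G_1(r)$ then $\mbox{Hess}(r) \leq (f_1'/f_1)\,[g-dr\otimes dr]$, where $f_i$ solves the scalar Jacobi equation $f_i'' + G_i f_i = 0$ with $f_i(0)=0$, $f_i'(0)=1$, provided $f_i>0$ on $(0,\infty)$. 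Everything then reduces to solving or estimating these Jacobi equations for the three prescribed bounds, and the hypotheses $B < 2\epsilon$ and $b^2 \leq 1/4$ are exactly what guarantee the relevant comparison functions remain positive, so that no conjugate points obstruct the argument.

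For case $(iii)$ the bounds are constant, so the model functions are explicit: $f'' - c^2 f = 0$ with $f(0)=0,\ f'(0)=1$ gives $f = \sinh(cr)/c$ and $f'/f = c\coth(cr)$; taking $c = \beta$ for the upper curvature bound and $c = \alpha$ for the lower one yields the two stated inequalities. For case $(ii)$ I would exploit the Euler structure of the model equation: the power $\psi = r^s$ satisfies $\psi''= s(s-1)r^{s-2}$, and choosing $s_{-} = \tfrac{1}{2}(1 + \sqrt{1 - 4b^2})$, resp. $s_{+}=\tfrac{1}{2}(1 + \sqrt{1 + 4a^2})$, makes $\psi$ a sub- or supersolution of the corresponding equation because $r^{-2} \geq (1+r^2)^{-1}$. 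A Riccati comparison for the logarithmic derivative $u = f'/f$, which solves $u' = -G - u^2$, then propagates the sub/supersolution inequality to give $f'/f \geq s_-/r$ and $f'/f \leq s_+/r$, which are precisely the asserted bounds.

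The remaining and most delicate case is $(i)$, where the curvature bounds decay like $(1+r^2)^{-(1+\epsilon)}$. Here I would avoid solving the Jacobi equation explicitly and argue directly from integral estimates, the crucial input being $\int_0^\infty s(1+s^2)^{-(1+\epsilon)}\,ds = \tfrac{1}{2\epsilon}$. For the lower Hessian bound, $f'' = -\kappa f$ with $\kappa = B(1+r^2)^{-(1+\epsilon)} \geq 0$ forces $f/r$ to be non-increasing, hence $f \leq r$; combined with $f'(r) = 1 - \int_0^r \kappa f \geq 1 - \int_0^r s\kappa(s)\,ds \geq 1 - \tfrac{B}{2\epsilon}$, this yields $f'/f \geq (1 - \tfrac{B}{2\epsilon})/r$, and $B < 2\epsilon$ keeps the constant positive. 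For the upper bound, $f'' = \kappa f \geq 0$ with $\kappa = A(1+r^2)^{-(1+\epsilon)}$ gives $g := rf' - f \geq 0$ and $(g/f)' \leq r\kappa$, whence $rf'/f = 1 + g/f \leq 1 + \int_0^r s\kappa(s)\,ds \leq 1 + \tfrac{A}{2\epsilon} \leq e^{A/2\epsilon}$, which is the claimed estimate. I expect the main obstacles to be bookkeeping the sign and monotonicity facts about $f$ cleanly and verifying the positivity (no–conjugate–point) conditions that license the Hessian comparison theorem in the non–constant–curvature cases; the constants themselves fall out of the single integral $\tfrac{1}{2\epsilon}$ together with the elementary inequality $1 + x \leq e^x$.
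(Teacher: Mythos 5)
Your argument is correct, but note that the paper does not prove this lemma at all: it is quoted verbatim from the cited sources (Greene--Wu, Dong--Wei, Han--Li--Ren--Wei), so there is no internal proof to compare against. Your reduction is exactly the standard one used in those references: since $x_0$ is a pole, the Greene--Wu Hessian comparison theorem converts a radial curvature bound $K\le G$ (resp.\ $K\ge G$) into the lower (resp.\ upper) bound $\mbox{Hess}(r)\gtrless \frac{f'}{f}\,[g-dr\otimes dr]$ with $f''+Gf=0$, $f(0)=0$, $f'(0)=1$, and the three cases become Jacobi/Riccati estimates. Your case $(iii)$ is the textbook constant-curvature computation; your case $(ii)$ Euler-equation sub/supersolution trick works because $s(s-1)=\mp b^2,\,a^2$ and $r^{-2}\ge(1+r^2)^{-1}$, with the initial comparison near $r=0$ secured by $s_-\le 1\le s_+$; and your case $(i)$ integral estimates are sound -- in fact for the upper bound you obtain $rf'/f\le 1+\frac{A}{2\epsilon}$, which is strictly sharper than the stated $e^{A/2\epsilon}$ and implies it via $1+x\le e^x$. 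The only points you should make explicit in a final write-up are the bootstrap showing the model solutions stay positive on all of $(0,\infty)$ (so the comparison theorem applies globally: on the maximal interval where $f>0$ one has $f'/f>0$, hence $f$ is increasing and cannot vanish), and the verification that the Riccati comparison initial condition holds as $r\to 0^+$; both are routine and you have correctly identified where the hypotheses $B<2\epsilon$ and $b^2\le 1/4$ enter.
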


Using Lemma \ref{lem-Hessian}, by a direct calculation  we have the following result.

 \begin{lemma} \label{lem-alpha}$(\cite{DW, DL})$
Let $M^n$ be a complete  manifold of  dimension $n$ with a pole $x_0$. Assume that the radial curvature of $M$
satisfies one of the following  conditions:

$(i)$ $-\frac{A}{(1+r^2)^{1+\epsilon}} \leq K \leq \frac{B}{(1+r^2)^{1+\epsilon}}$ with $\epsilon>0$, $A \geq 0$, $0 \leq B < 2\epsilon$
and $n -(n-1)\frac{B}{2\epsilon}-2pe^{A/2\epsilon} > 0$;

$(ii)$ $-\frac{a^2}{1+r^2} \leq K \leq \frac{b^2}{1+r^2}$ with $a\geq 0$, $b^2 \in[0,1/4]$ and
$1+\frac{n-1}{2}(1+\sqrt{1-4b^2})-p(1+\sqrt{1+4a^2}) > 0$;

$(iii)$ $-\alpha^2\leq K \leq -\beta^2$ with  $\alpha> 0$, $\beta> 0$ and $(2n-3)\beta-2(p-1)\alpha \geq 0$.
Then
\begin{equation*}
 \tau(p)\geq \sigma(p)=: \begin{cases}
n -(n-1)\frac{B}{2\epsilon}-2pe^{A/2\epsilon} & \text{if $K$ satisfies $(i)$,} \\
1+\frac{n-1}{2}(1+\sqrt{1-4b^2})-p(1+\sqrt{1+4a^2}) & \text{if $K$ satisfies $(ii)$,}\\
n-1-(p-1)\frac{\alpha}{\beta} & \text{if $K$ satisfies $(iii)$.}
\end{cases}
\end{equation*}
\end{lemma}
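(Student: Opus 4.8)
The plan is to reduce the estimate of $\tau(p)$ to a pointwise control of the eigenvalues of $\mbox{Hess}(r^2)$, to which Lemma \ref{lem-Hessian} applies directly. Since $r^2$ is smooth and $\nabla(r^2)=2r\nabla r$, one has the identity $\mbox{Hess}(r^2)=2\,dr\otimes dr+2r\,\mbox{Hess}(r)$. Because $|\nabla r|=1$ and $\mbox{Hess}(r)(\nabla r,\cdot)=0$, the radial direction $\partial/\partial r$ is an eigenvector of $\mbox{Hess}(r^2)$ with eigenvalue $2$, while on the orthogonal complement $(\partial/\partial r)^{\perp}$ the eigenvalues of $\mbox{Hess}(r^2)$ are exactly $2r$ times those of $\mbox{Hess}(r)$. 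Writing the bounds of Lemma \ref{lem-Hessian} in the form $c_1(r)[g-dr\otimes dr]\le\mbox{Hess}(r)\le c_2(r)[g-dr\otimes dr]$, the $n-1$ tangential eigenvalues of $\mbox{Hess}(r^2)$ therefore lie in the interval $[\,2rc_1(r),\,2rc_2(r)\,]$, and the remaining eigenvalue equals $2$.

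With this eigenvalue information I would estimate the alternating sum defining $\tau(p)$. The clean way is to write
\begin{equation*}
\lambda_1+\cdots+\lambda_{n-p}-\lambda_{n-p+1}-\cdots-\lambda_n=\mathrm{tr}\,\mbox{Hess}(r^2)-2\sum_{i=n-p+1}^{n}\lambda_i,
\end{equation*}
then bound the trace from below by $2+(n-1)\cdot 2rc_1(r)$, and the sum of the $p$ largest eigenvalues from above by $p$ times the maximal eigenvalue, namely $2prc_2(r)$ (the largest eigenvalue being the tangential upper bound, since $2rc_2(r)\ge 2$ in all three cases). This produces a lower bound for the bracket of the shape $2+2(n-1)rc_1(r)-4prc_2(r)$, and hence a lower bound for $\tau(p)=\tfrac12\inf_{x}\{\cdots\}$ as soon as the quantities $rc_1(r)$ and $rc_2(r)$ are controlled.

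For cases $(i)$ and $(ii)$ these factors are \emph{constants}: the $1/r$ appearing in Lemma \ref{lem-Hessian}$(i)$--$(ii)$ cancels the $2r$, so the tangential eigenvalues of $\mbox{Hess}(r^2)$ already lie in a fixed interval independent of $r$. Substituting $c_1,c_2$ then reproduces exactly $\sigma(p)$ on those two lines, and the stated sign hypotheses are precisely what force $\sigma(p)>0$. The delicate case, and where I expect the main difficulty, is $(iii)$: here $rc_1(r)=r\beta\coth(\beta r)$ and $rc_2(r)=r\alpha\coth(\alpha r)$ are genuinely $r$-dependent and unbounded, so the naive trace estimate above does not immediately close. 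To extract the $r$-independent constant $n-1-(p-1)\tfrac{\alpha}{\beta}$ I would exploit the monotonicity of $\coth$: since $\alpha\ge\beta$ one has $\coth(\alpha r)\le\coth(\beta r)$, hence $c_2(r)\le\tfrac{\alpha}{\beta}c_1(r)$, which is the tool for comparing the two radial-dependent terms and absorbing the $r$-dependence, together with the exhaustion adapted to negative curvature used in \cite{DL}; the assumption $(2n-3)\beta-2(p-1)\alpha\ge0$ is then exactly the inequality that keeps the resulting constant nonnegative. Carrying out this last bookkeeping—tracking which eigenvalue sits in the positive block and which in the negative block once the $\coth$ factors are in play—is the technical heart of the argument, whereas cases $(i)$ and $(ii)$ follow routinely by comparison with \cite{DW}.
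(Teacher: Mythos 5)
Your treatment of cases $(i)$ and $(ii)$ is correct and is essentially the ``direct calculation'' the paper alludes to (the text gives no proof beyond citing \cite{DW, DL}): the identity $\mbox{Hess}(r^2)=2\,dr\otimes dr+2r\,\mbox{Hess}(r)$ shows the radial eigenvalue is $2$ while the tangential eigenvalues lie in $[2rc_1(r),2rc_2(r)]$; in those two cases $2rc_1(r)$ and $2rc_2(r)$ are constants sandwiching $2$, so your estimate
\begin{equation*}
\lambda_1+\cdots+\lambda_{n-p}-\lambda_{n-p+1}-\cdots-\lambda_n\;\geq\;2+2(n-1)rc_1(r)-4prc_2(r)
\end{equation*}
reproduces $\sigma(p)$ exactly after dividing by $2$.

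The genuine gap is case $(iii)$, and it is not a bookkeeping issue that the tools you name can repair: with $\tau(p)$ defined through $\mbox{Hess}(r^2)$, the bound $\tau(p)\ge n-1-(p-1)\alpha/\beta$ cannot be reached by any grouping of eigenvalues, because it already fails on hyperbolic space. Take $K\equiv-\beta^2$, so $\alpha=\beta$ and the hypothesis $(2n-3)\beta-2(p-1)\alpha\ge0$ holds for every $p\le n-1$. Then $\mbox{Hess}(r^2)$ has eigenvalues $2$ and, with multiplicity $n-1$, $t(r):=2r\beta\coth(\beta r)$, which increases from $2$ to $+\infty$; hence
\begin{equation*}
\lambda_1+\cdots+\lambda_{n-p}-\lambda_{n-p+1}-\cdots-\lambda_n=2+(n-2p-1)\,t(r),
\end{equation*}
whose infimum over $r$ is $-\infty$ when $p>\frac{n-1}{2}$ and equals $2(n-2p)$ otherwise; either way $\tau(p)\le n-2p<n-p=\sigma(p)$. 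Your inequality $\coth(\alpha r)\le\coth(\beta r)$ is true, but inserting it only turns the coefficient of the unbounded factor $2r\coth(\beta r)$ into $(n-p-1)\beta-p\alpha$, which the hypothesis of $(iii)$ does not force to be nonnegative, so the infimum still escapes to $-\infty$ in general. The constant $n-1-(p-1)\alpha/\beta$ in \cite{DL} actually arises from a different radial vector field/exhaustion function adapted to negative curvature (of $\cosh(\beta r)$ type), for which the radial eigenvalue is comparable to the tangential ones; that replaces the quantity being infimized, i.e.\ one proves a monotonicity formula with a different weight rather than the stated lower bound on the $\tau(p)$ built from $\mbox{Hess}(r^2)$. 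As written, your argument for $(iii)$ does not close, and no refinement of the eigenvalue bookkeeping within the $r^2$-framework can make it close.
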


 Let $(M^n, g)$ be a Riemannian manifold of dimension $n$, and let $V\rightarrow M$ be the vector bundle of skew-symmetric endomorphisms of $TM$ endowed with its canonical Riemannian structure. Then the curvature tensor $Rm$ can be seen as a $V$-valued $2$-form and thus the second Bianchi identity can be equivalently expressed as  $dRm=0$. Actually, using moving frame method, we may compute
\begin{align*}
(d R_{ij})_{klm}=& R_{ijlm, k} -R_{ijkm, l}+ R_{ijkl, m}\\
=& R_{ijlm, k} + R_{ijmk, l}+ R_{ijkl, m}=0.
\end{align*}

\begin{lemma}
 Let $(M^n, g)$, $n\geq 3$, be a LCF Riemannian manifold with constant scalar curvature. Then the curvature tensor $Rm$ is a harmonic $V$-valued $2$-form and thus $Rm$ satisfies a conservation law, that is, $\mbox{div} S_{Rm} = 0$ as defined in $(\ref{divergence})$.
\end{lemma}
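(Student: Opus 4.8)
The plan is to verify that $Rm$, regarded as a $V$-valued $2$-form, is \emph{harmonic}, that is, annihilated by both $d^\nabla$ and $\delta^\nabla$; once this is known the conservation law is immediate, since substituting $\omega = Rm$ into the divergence formula (\ref{divergence}) gives
\begin{equation*}
(\mbox{div}\,S_{Rm})(X) = \langle \delta^\nabla Rm, i_X Rm\rangle + \langle i_X d^\nabla Rm, Rm\rangle = 0
\end{equation*}
for every $X \in TM$. The condition $d^\nabla Rm = 0$ is exactly the second Bianchi identity, already checked just above the statement in the componentwise form $(dR_{ij})_{klm} = R_{ijlm,k} + R_{ijmk,l} + R_{ijkl,m} = 0$. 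Hence the only thing left to establish is $\delta^\nabla Rm = 0$.

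To compute the codifferential I would fix $x \in M$ and an orthonormal frame $\{e_i\}$ with $(\nabla e_i)_x = 0$. As in (\ref{delta}), $\delta^\nabla$ contracts one of the two form indices of $Rm$ against the covariant derivative, so in components $(\delta^\nabla Rm)_{ij,l} = -\sum_k \nabla_{e_k} R_{ijkl}$; that is, $\delta^\nabla Rm$ is, up to sign, the divergence of the curvature tensor taken on a form index. The decisive step is the once-contracted second Bianchi identity: contracting the cyclic identity $R_{ijlm,k} + R_{ijmk,l} + R_{ijkl,m} = 0$ and invoking the pair symmetry $R_{ijkl} = R_{klij}$ yields $\sum_k \nabla_{e_k} R_{ijkl} = \nabla_i R_{jl} - \nabla_j R_{il}$ (up to the usual sign conventions), which is precisely the Codazzi defect of the Ricci tensor.

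Finally I would invoke the fact, recorded in the Preliminaries, that a locally conformally flat manifold with constant scalar curvature has Codazzi Ricci tensor, i.e. $R_{ij,k} = R_{ik,j}$, equivalently $\nabla_i R_{jl} = \nabla_j R_{il}$. Consequently the divergence of the curvature vanishes, so $\delta^\nabla Rm = 0$ and $Rm$ is harmonic, which finishes the argument. Alternatively one may bypass the explicit computation by quoting the remark following Lemma 3.1, where it is observed that the curvature tensor is harmonic precisely when the Ricci tensor is Codazzi. The only genuinely delicate point is the bookkeeping of index placement and signs, so that the form-index contraction defining $\delta^\nabla$ is matched correctly with the contraction of the second Bianchi identity; everything else is formal.
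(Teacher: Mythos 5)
Your proposal is correct and follows essentially the same route as the paper: $d^\nabla Rm=0$ is the second Bianchi identity, and $\delta^\nabla Rm$ is computed via the pair symmetry and the contracted second Bianchi identity to equal the Codazzi defect of the Ricci tensor, which vanishes for an LCF manifold with constant scalar curvature as recorded in the Preliminaries. The conservation law then follows from (\ref{divergence}) exactly as you state.
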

\begin{proof}
We only need to prove that $d Rm=0$ and $\delta Rm=0$. We have already pointed out that the first property is just the second Bianchi identity. In terms of the condition that $M$ is a LCF manifold with constant scalar curvature, we find that
\begin{align*}
(\delta Rm)_{jkl}=& R_{ijkl, i}= \nabla_i R_{ijkl}= \nabla_i R_{klij}\\
=& -\nabla_k R_{ijli} - \nabla_l R_{ijik}\\
=& \nabla_k R_{jl} - \nabla_l R_{jk}=0.
\end{align*}
\end{proof}
\begin{remark}
By the relation $(\ref{Weyl})$, the Weyl curvature tensor $W$ of an Einstein manifold is also a harmonic $V$-valued $2$-form. Thus, $W$ also satisfies a conservation law.
\end{remark}

For the Ricci tensor  $\mbox{Ric}$, we can consider   $\mbox{Ric}$  to be  a $1$-form $\mbox{Ric}^\sharp$ with values in the tangent vector bundle at every point $x \in M$, that is, for every $X \in T_xM$, $\mbox{Ric}^\sharp(X)$ satisfies
\begin{equation*}
\langle \mbox{Ric}^\sharp(X), Y \rangle = \mbox{Ric}(X, Y), \ \forall\,  Y \in T_xM.
\end{equation*}
$E^\sharp$ satisfies $\langle E^\sharp(X), Y \rangle = E(X, Y),  \ \forall Y \in T_xM$, where E is the traceless Ricci tensor given by $E= \mbox{Ric} - \frac{R}{n}g$.
Thus if $M$  is a conformally flat Riemannian manifold with constant scalar curvature,
then by Proposition \ref{pro-3.1}, $\mbox{Ric}^\sharp$ and $E^\sharp$ satisfy conservation laws, that is, $\mbox{div} S_{\mbox{Ric}^\sharp} = 0$  and $\mbox{div} S_{E^\sharp} = 0$ as defined in (\ref{divergence}).
Let $|\operatorname{Ric}|$ be the norm of Ricci tensor $\operatorname{Ric}$ and $|E|$ be the norm of the traceless Ricci tensor $E$ given by $|\operatorname{Ric}| = ( \overset{n}{\underset{i,j=1}{\sum}} R_{ij}^2 )^{\frac {1}{2}}\, $ and $|E| = \bigg ( \overset{n}{\underset{i,j=1}{\sum}}(R_{ij} - \frac {R}{n} \delta _{ij})^2 \bigg)^{\frac {1}{2}}\, $ respectively. Summarizing the previous discussions, we have the following results.
\begin{theorem} \label{thm-Rie}
 Let $(M^n, g)$ be a complete, locally conformally flat Riemannian manifold with constant scalar curvature. Assume that the radial curvature of $M$ satisfies the conditions of Lemma \ref{lem-alpha}.
 Then for any $0<\rho_1 \leq \rho_2$,

 \begin{equation*}
\frac{1}{\rho_1^ { \sigma(2)}} \int_{B_{x_0}(\rho_1)}
|Rm|^2dv \leq \frac{1}{\rho_2^{\sigma(2)}}
\int_{B_{x_0}(\rho_2)}|Rm|^2dv,
\end{equation*}
and
\begin{equation*}
\frac{1}{\rho_1^ { \sigma(1)}} \int_{B_{x_0}(\rho_1)}
|\mbox{Ric}|^2dv \leq \frac{1}{\rho_2^{\sigma(1)}}
\int_{B_{x_0}(\rho_2)}|\mbox{Ric}|^2dv,
\end{equation*}
and
\begin{equation*}
\frac{1}{\rho_1^ { \sigma(1)}} \int_{B_{x_0}(\rho_1)}
|E|^2dv \leq \frac{1}{\rho_2^{\sigma(1)}}
\int_{B_{x_0}(\rho_2)}|E|^2dv,
\end{equation*}
where
\begin{equation*}
\sigma(p)\big{|}_{p=1,2}= \begin{cases}
n -(n-1)\frac{B}{2\epsilon}-2pe^{A/2\epsilon} & \text{if $K$ satisfies $(i)$,} \\
1+\frac{n-1}{2}(1+\sqrt{1-4b^2})-p(1+\sqrt{1+4a^2}) & \text{if $K$ satisfies $(ii)$,}\\
n-1-(p-1)\frac{\alpha}{\beta} & \text{if $K$ satisfies $(iii)$.}
\end{cases}
\end{equation*}
\end{theorem}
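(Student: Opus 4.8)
The plan is to treat the statement as three parallel applications of the monotonicity formula in Proposition \ref{pro-monot}. The key observation is that each of the three tensors in question --- the curvature tensor $Rm$, the Ricci tensor, and the traceless Ricci tensor $E$ --- can be viewed as a vector-bundle-valued form satisfying the conservation law $\mbox{div}\,S_\omega = 0$, so the whole argument reduces to feeding each one into Proposition \ref{pro-monot} with the correct degree $p$ and the exponent $\sigma(p)$ supplied by Lemma \ref{lem-alpha}.

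First I would record the three conservation laws. Regarding $Rm$ as a $V$-valued $2$-form, we have already seen that $dRm = 0$ (the second Bianchi identity) and $\delta Rm = 0$ (using local conformal flatness together with constant scalar curvature), whence $\mbox{div}\,S_{Rm} = 0$; here the relevant degree is $p = 2$. For the Ricci and traceless Ricci tensors I would view them as the $TM$-valued $1$-forms $\mbox{Ric}^\sharp$ and $E^\sharp$. Since $M$ is LCF with constant scalar curvature, the Preliminaries show that $\mbox{Ric}$ is a symmetric Codazzi tensor with constant trace $R$, and consequently $E = \mbox{Ric} - \frac{R}{n}g$ is a symmetric Codazzi tensor of trace zero. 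Proposition \ref{pro-3.1} then gives $\mbox{div}\,S_{\mbox{Ric}^\sharp} = 0$ and $\mbox{div}\,S_{E^\sharp} = 0$; here the relevant degree is $p = 1$.

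Next I would supply the exponents. Under any of the radial-curvature hypotheses $(i)$, $(ii)$, $(iii)$, Lemma \ref{lem-alpha} yields $\tau(p) \geq \sigma(p)$, and the pinching inequalities attached to those hypotheses are exactly the conditions forcing $\sigma(p) > 0$; for instance in case $(iii)$ the assumption $(2n-3)\beta - 2(p-1)\alpha \geq 0$ gives $\sigma(p) = n - 1 - (p-1)\frac{\alpha}{\beta} \geq \frac{1}{2} > 0$. Thus $\tau(p) > 0$ and $0 < \sigma(p) \leq \tau(p)$, so Proposition \ref{pro-monot} applies with $\sigma = \sigma(p)$ and yields the monotonicity of $\rho^{-\sigma(p)}\int_{B_{x_0}(\rho)} |\omega|^2 \, dv$ for each of the three forms $\omega$, giving the three displayed inequalities upon taking $\rho = \rho_1$ and $\rho = \rho_2$.

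To obtain the formulas exactly as stated, I would finally identify the bundle-valued form norms with the tensor norms: with the conventions introduced earlier in this section one checks $|\omega|^2 = |Rm|^2$ for $\omega = Rm$, $|\mbox{Ric}^\sharp|^2 = \sum_{i,j} R_{ij}^2 = |\mbox{Ric}|^2$, and $|E^\sharp|^2 = \sum_{i,j}(R_{ij} - \frac{R}{n}\delta_{ij})^2 = |E|^2$ (and any incidental constant factor would cancel on both sides of the inequality). There is no deep obstacle here: all the analytic work already resides in Proposition \ref{pro-monot} and in the Hessian comparison underlying Lemma \ref{lem-alpha}. The only points that demand care are the degree bookkeeping --- handling $Rm$ with $p = 2$ but the two Ricci-type forms with $p = 1$, which is why the three inequalities carry the exponents $\sigma(2)$, $\sigma(1)$, $\sigma(1)$ respectively --- and verifying that each curvature hypothesis in Lemma \ref{lem-alpha} genuinely delivers $\sigma(p) > 0$, so that the monotonicity formula is not vacuous.
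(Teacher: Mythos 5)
Your proposal is correct and follows exactly the route the paper intends: the theorem is stated as a summary of the preceding discussion, namely the conservation laws for $Rm$ (as a $V$-valued $2$-form, via Lemma 3.3) and for $\mbox{Ric}^\sharp$, $E^\sharp$ (as Codazzi $1$-forms with constant trace, via Proposition \ref{pro-3.1}), combined with Proposition \ref{pro-monot} and the lower bound $\tau(p)\geq \sigma(p)>0$ from Lemma \ref{lem-alpha}. Your degree bookkeeping ($p=2$ for $Rm$, $p=1$ for the Ricci-type forms) and the positivity check of $\sigma(p)$ match the paper's argument.
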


\begin{corollary} \label{cor3.3}
Let $M^n$, $n\geq 3$, be a complete, locally conformally flat Riemannian manifold with a pole $x_0$ and with  constant scalar curvature. Assume   the radial curvature of $M$
satisfies one of the following  conditions:

$(i)$ $-\frac{A}{(1+r^2)^{1+\epsilon}} \leq K \leq \frac{B}{(1+r^2)^{1+\epsilon}}$ with $\epsilon>0$, $A \geq 0$, $0 \leq B < 2\epsilon$ and $n -(n-1)\frac{B}{2\epsilon}-2e^{A/2\epsilon}>0$;

$(ii)$ $-\frac{a^2}{1+r^2} \leq K \leq \frac{b^2}{1+r^2}$ with $a\geq 0$, $b^2 \in[0,1/4]$ and
$1+\frac{n-1}{2}(1+\sqrt{1-4b^2})-(1+\sqrt{1+4a^2})> 0$.\\
Assume further that
\begin{equation*}
\int_{B_{x_0}(\rho)} |\mbox{Ric}|^2 dv_g = o(\rho^{\sigma(1)})  \ \ \mbox{as} \ \ \rho \rightarrow +\infty.
\end{equation*}
Then $M$ is flat.
\end{corollary}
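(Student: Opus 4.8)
The plan is to read off the conclusion directly from the $L^2$ monotonicity formula for the Ricci tensor established in Theorem \ref{thm-Rie}, combined with the prescribed growth hypothesis. First I would observe that the pinching conditions $(i)$ and $(ii)$ listed in the corollary are precisely the requirements $\sigma(1)>0$ coming from Lemma \ref{lem-alpha} with $p=1$; this is exactly what is needed for the second monotonicity inequality of Theorem \ref{thm-Rie} to be valid. (Note that case $(iii)$ of Lemma \ref{lem-alpha} is deliberately absent here, since there $K\le -\beta^2<0$ rules out flatness a priori.) Thus, since $M$ is LCF with constant scalar curvature, $\mbox{Ric}^\sharp$ satisfies a conservation law and we obtain, for any $0<\rho\le \rho_2$,
\begin{equation*}
\frac{1}{\rho^{\sigma(1)}}\int_{B_{x_0}(\rho)}|\mbox{Ric}|^2\,dv_g \le \frac{1}{\rho_2^{\sigma(1)}}\int_{B_{x_0}(\rho_2)}|\mbox{Ric}|^2\,dv_g.
\end{equation*}

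Next I would fix $\rho$ and let $\rho_2\to+\infty$ on the right-hand side. By the hypothesis $\int_{B_{x_0}(\rho_2)}|\mbox{Ric}|^2\,dv_g=o(\rho_2^{\sigma(1)})$, the right-hand side tends to $0$. Since the left-hand side is independent of $\rho_2$ and nonnegative, this forces
\begin{equation*}
\int_{B_{x_0}(\rho)}|\mbox{Ric}|^2\,dv_g=0
\end{equation*}
for every $\rho>0$. As $|\mbox{Ric}|^2\ge 0$ and the geodesic balls exhaust $M$, I conclude $\mbox{Ric}\equiv 0$ on $M$.

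Finally I would feed $\mbox{Ric}\equiv 0$ into the local-conformal-flatness decomposition (\ref{LCF-decomposition}). Taking the trace, the scalar curvature $R=\sum_i R_{ii}$ also vanishes, so both terms on the right-hand side of (\ref{LCF-decomposition}) vanish and hence $R_{ijkl}\equiv 0$, i.e. $M$ is flat. I expect no serious obstacle in this argument: all the analytic content has already been absorbed into Theorem \ref{thm-Rie}, and the only points requiring care are verifying that the stated hypotheses genuinely yield $\sigma(1)>0$ (so that the monotonicity formula is available) and that the \emph{little-o} growth is strong enough to annihilate the right-hand limit — a borderline $O(\rho^{\sigma(1)})$ bound would only give boundedness of the normalized energy, not its vanishing, and hence would not suffice to conclude $\mbox{Ric}\equiv 0$.
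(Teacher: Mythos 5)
Your proposal is correct and follows exactly the route of the paper's own (very terse) proof: apply the monotonicity inequality of Theorem \ref{thm-Rie} for $|\mbox{Ric}|^2$, let the outer radius tend to infinity so that the $o(\rho^{\sigma(1)})$ hypothesis forces $\mbox{Ric}\equiv 0$, and then conclude flatness from the decomposition (\ref{LCF-decomposition}). The extra details you supply --- checking that conditions $(i)$ and $(ii)$ give $\sigma(1)>0$, and that tracing $\mbox{Ric}\equiv 0$ kills the scalar curvature term --- are exactly the steps the paper leaves implicit.
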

\begin{proof}
It follows from Theorem \ref{thm-Rie} and the growth condition for $|\mbox{Ric}|^2$ that $(M,g)$ is Ricci-flat. Then (\ref{LCF-decomposition}) implies immediately that $Rm=0$.
\end{proof}
\begin{remark}
If the locally conformally flat manifold $M$ is scalar flat, then its sectional curvature can be controlled only by its Ricci curvature. Hence in this case, the curvature conditions $(i)$ and $(ii)$ in Corollary \ref{cor3.3} can be replaced by corresponding Ricci curvature.
\end{remark}
\begin{corollary}
Let $M^n$, $n\geq 3$,  be a complete, locally conformally flat Riemannian manifold  with  constant scalar curvature. Assume  $M$ has a pole $x_0$ and its radial curvature satisfies
$-\alpha^2\leq K \leq -\beta^2$ with  $\alpha \geq \beta> 0$.
If
\begin{equation*}
\int_{B_{x_0}(\rho)} |E|^2 dv_g = o(\rho^{n-1}) \ \ \mbox{as} \ \ \rho \rightarrow +\infty,
\end{equation*}
then $M$ is of constant curvature $\frac{R}{n(n-1)}$.
\end{corollary}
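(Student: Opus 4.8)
The plan is to apply the monotonicity formula for $|E|^2$ from Theorem \ref{thm-Rie}, specialized to the radial-curvature hypothesis (iii) and to degree $p=1$. First I would check that the admissibility condition of Lemma \ref{lem-alpha}(iii) holds for $p=1$: the inequality $(2n-3)\beta - 2(p-1)\alpha \geq 0$ reduces to $(2n-3)\beta \geq 0$, which is automatic since $\beta > 0$ and $n \geq 3$. Hence $\tau(1) \geq \sigma(1) = n-1-(1-1)\tfrac{\alpha}{\beta} = n-1$. Since $M$ is locally conformally flat with constant scalar curvature, the traceless Ricci tensor $E = \mbox{Ric} - \tfrac{R}{n}g$ is a symmetric Codazzi tensor with vanishing (hence constant) trace, so Proposition \ref{pro-3.1} guarantees that $E^\sharp$ satisfies the conservation law. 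Theorem \ref{thm-Rie} then delivers
\[
\frac{1}{\rho_1^{n-1}} \int_{B_{x_0}(\rho_1)} |E|^2\, dv_g \;\leq\; \frac{1}{\rho_2^{n-1}} \int_{B_{x_0}(\rho_2)} |E|^2\, dv_g
\]
for all $0 < \rho_1 \leq \rho_2$.

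Next I would exploit the growth hypothesis. Fixing $\rho_1$ and letting $\rho_2 \to +\infty$, the right-hand side tends to $0$ precisely because $\int_{B_{x_0}(\rho_2)} |E|^2\, dv_g = o(\rho_2^{\,n-1})$. Therefore $\int_{B_{x_0}(\rho_1)} |E|^2\, dv_g = 0$ for every $\rho_1 > 0$, which forces $E \equiv 0$ on $M$. Thus $M$ is Einstein, with $\mbox{Ric} = \tfrac{R}{n}g$.

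Finally, since $M$ is simultaneously locally conformally flat and Einstein, I would substitute $R_{ij} = \tfrac{R}{n}\delta_{ij}$ into the curvature decomposition (\ref{LCF-decomposition}). A short algebraic collapse of the Ricci and scalar terms yields
\[
R_{ijkl} = \frac{R}{n(n-1)}(\delta_{ik}\delta_{jl} - \delta_{il}\delta_{jk}),
\]
which is exactly the curvature tensor of a space of constant sectional curvature $\tfrac{R}{n(n-1)}$ (consistent with the negativity forced by $-\alpha^2 \leq K \leq -\beta^2$). This completes the identification.

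As for difficulty: once the monotonicity machinery of the preceding sections is available, this corollary is essentially a direct application, with no serious analytic obstacle. The only points requiring genuine care are confirming that the degree-$1$ exponent $\sigma(1)$ equals exactly $n-1$ under condition (iii) — so that it matches the threshold in the growth hypothesis — and verifying the admissibility inequality of Lemma \ref{lem-alpha}(iii) for $p=1$. Everything else is the standard \emph{monotone quantity of sublinear growth must vanish} argument, followed by the elementary identity characterizing locally conformally flat Einstein metrics as space forms.
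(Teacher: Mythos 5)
Your proposal is correct and follows exactly the route the paper intends: condition (iii) of Lemma \ref{lem-alpha} with $p=1$ gives $\sigma(1)=n-1$, Theorem \ref{thm-Rie} supplies the monotonicity of $\rho^{-(n-1)}\int_{B_{x_0}(\rho)}|E|^2\,dv_g$, the $o(\rho^{n-1})$ hypothesis forces $E\equiv 0$, and substituting the Einstein condition into (\ref{LCF-decomposition}) yields constant sectional curvature $\frac{R}{n(n-1)}$. This mirrors the paper's proof of the preceding corollary (Ricci-flat case), with the traceless Ricci tensor in place of the full Ricci tensor, and all the admissibility checks you flag are handled correctly.
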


\begin{remark}
It is easy to see that the above rigidity results for LCF manifolds with constant scalar curvature also hold for the curvature tensor and the Weyl curvature tensor of Einstein manifolds.
\end{remark}

\section{Vanishing theorems for $L^2$ harmonic $p$-forms on LCF manifolds}

Let $(M^n, g)$ be a complete, locally conformally flat Riemannian manifold, and let $\triangle$ be the Hodge Laplace-Beltrami operator of $M^n$ acting on the space of differential $p$-forms.
 Given two $p$-forms $\omega$ and $\theta$, we define a pointwise inner product
\begin{equation*}
\langle \omega, \theta \rangle=\underset{i_1, \cdots, i_p=1}{\overset{n}{\sum}} \omega(e_{i_1}, \cdots, e_{i_p}) \theta(e_{i_1}, \cdots, e_{i_p}).
\end{equation*}
Here we are omitting the normalizing factor $\frac{1}{p!}$. The Weitzenb\"{o}ck formula (\cite{Wu}) gives
\begin{equation}
\triangle= \nabla^2 -\mathcal {R}_p, \label{Weit}
\end{equation}
where $\nabla^2 $ is the Bochner Laplacian and $\mathcal {R}_p$ is an endomorphism depending upon the curvature tensor of $M^n$. Using an orthonormal basis $\{\theta^1, \ldots, \theta^n\}$ dual to $\{e_1, \ldots, e_n\}$,  the curvature term $\mathcal {R}_p$ can be  expressed as
\begin{equation*}
\langle \mathcal {R}_p(\omega), \omega \rangle = \langle \underset{j,k=1}{\overset{n}{\sum}} \theta^k \wedge i_{e_j}R(e_k, e_j)\omega, \omega \rangle
\end{equation*}
for any $p$-form $\omega$.
Let $\omega$ be any harmonic $p$-form, which may be expressed in a local coordinate system as
\begin{equation*}
\omega=\alpha_{i_1,\cdots,i_p}dx^{i_1}\wedge \cdots \wedge dx^{i_p}.
\end{equation*}
By (\ref{Weit}), we deduce that
\begin{align}
\nonumber \frac{1}{2}\triangle |\omega|^2 =&|\nabla \omega|^2 +\langle \underset{j,k=1}{\overset{n}{\sum}} \theta^k \wedge i_{e_j}R(e_k, e_j)\omega, \omega \rangle\\
  =&|\nabla \omega|^2 + p F(\omega), \label{laplacian1}
\end{align}
where
\begin{equation*}
 F(\omega) = R_{ij}\alpha^{ii_2 \cdots i_p}\alpha^j_{i_2 \cdots i_p} -\frac{p-1}{2}R_{ijkl}\alpha^{iji_3 \cdots i_p}\alpha^{kl}_{i_3 \cdots i_p}.
\end{equation*}
Substituting  (\ref{LCF-decomposition}) into the above equality, we obtain
\begin{align}
\frac{1}{2}\triangle |\omega|^2 =&|\nabla \omega|^2 + p\Big[\frac{n-2p}{n-2}R_{ij}\alpha^{ii_2 \cdots i_p}\alpha^j_{i_2 \cdots i_p}+ \frac{(p-1)}{(n-1)(n-2)}R |\omega|^2 \Big] \label{F0}\\
 =& |\nabla \omega|^2 + p\Big[\frac{n-2p}{n-2}\Big(R_{ij} -\frac{R}{n}\delta_{ij}  \Big)\alpha^{ii_2 \cdots i_p}\alpha^j_{i_2 \cdots i_p} + \frac{n-p}{n(n-1)}R |\omega|^2 \Big]. \label{F}
\end{align}

Using the  method of Lagrange multipliers, one has the following lemma.
\begin{lemma} $\cite{SW}$ \label{lagrange}
Let $(a_{ij})_{n\times n}$ be a real symmetric matrix with $\overset {n}{\underset{i=1}{\sum}} a_{ii}=0$, then
\begin{equation*}
\overset {n}{\underset{i, j=1}{\sum}} a_{ij}x_i x_j \geq -\sqrt{\frac{n-1}{n}}\big(\overset {n}{\underset{i, j=1}{\sum}} a^2_{ij} \big)^{\frac{1}{2}}\sum_{i=1}^{n} x^2_i
\end{equation*}
where $x_i \in \mathbb{R}$.
\end{lemma}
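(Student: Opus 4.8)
The plan is to reduce the inequality to a sharp lower bound on the smallest eigenvalue of the matrix $A=(a_{ij})$, and then to extract that bound from the two constraints $\operatorname{tr}A=0$ and $\sum_{i,j}a_{ij}^2=|A|^2$. Since $A$ is real symmetric, the quadratic form $Q(x)=\sum_{i,j}a_{ij}x_ix_j$ restricted to the unit sphere $\{\sum_i x_i^2=1\}$ is a smooth function on a compact set, so it attains a minimum. Applying the method of Lagrange multipliers to $Q$ with the constraint $\sum_i x_i^2=1$, every critical point $x$ satisfies $Ax=\mu x$ for the multiplier $\mu$, so the constrained minimum of $Q$ equals the least eigenvalue $\mu_1$ of $A$. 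Homogenizing, this yields $Q(x)\ge \mu_1\sum_i x_i^2$ for all $x\in\mathbb{R}^n$, and it therefore suffices to prove $\mu_1\ge -\sqrt{\tfrac{n-1}{n}}\,|A|$.

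Next I would pass to the eigenvalues $\mu_1\le\mu_2\le\cdots\le\mu_n$ of $A$. Orthogonal invariance of the Frobenius norm gives $\sum_{i,j}a_{ij}^2=\sum_i\mu_i^2=|A|^2$, while the trace condition becomes $\sum_i\mu_i=0$; in particular $\mu_1\le 0$ (the case $A=0$ being trivial). The key estimate comes from isolating $\mu_1=-\sum_{i=2}^n\mu_i$ and applying the Cauchy--Schwarz inequality to the remaining $n-1$ eigenvalues:
\begin{equation*}
\mu_1^2=\Big(\sum_{i=2}^n\mu_i\Big)^2\le (n-1)\sum_{i=2}^n\mu_i^2=(n-1)\big(|A|^2-\mu_1^2\big).
\end{equation*}
Rearranging gives $n\,\mu_1^2\le(n-1)|A|^2$, hence $|\mu_1|\le\sqrt{\tfrac{n-1}{n}}\,|A|$, and since $\mu_1\le 0$ this is exactly $\mu_1\ge-\sqrt{\tfrac{n-1}{n}}\,|A|$. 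Combining with the first step completes the proof.

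There is no serious analytic obstacle here: the content is the Cauchy--Schwarz step, which is also what pins down the sharp constant $\sqrt{(n-1)/n}$ (equality forcing $\mu_2=\cdots=\mu_n$, i.e.\ a matrix with one distinct eigenvalue and an $(n-1)$-fold degenerate one). The only points that require a line of care are justifying that the constrained minimum over the sphere is genuinely an eigenvalue rather than merely a critical value, and noting that the Frobenius norm is unchanged under the orthogonal diagonalization of $A$; both are standard. If one prefers to stay closer to the phrase ``method of Lagrange multipliers,'' the same constant can instead be obtained by minimizing the coordinate $\mu_1$ directly subject to $\sum_i\mu_i=0$ and $\sum_i\mu_i^2=|A|^2$, where the stationarity conditions force $\mu_2=\cdots=\mu_n$ and reproduce the extremal configuration above.
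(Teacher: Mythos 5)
Your proof is correct. Note that the paper itself gives no proof of this lemma: it is quoted from the reference [SW] with only the remark that it follows from ``the method of Lagrange multipliers,'' so there is nothing to compare against line by line. Your argument --- reduce to the bound $Q(x)\ge\mu_1\sum_i x_i^2$ via diagonalization, then bound the least eigenvalue using $\mu_1=-\sum_{i\ge2}\mu_i$ together with Cauchy--Schwarz and the orthogonal invariance of the Frobenius norm --- is a complete and clean derivation, consistent in spirit with the Lagrange-multiplier hint (your closing remark correctly identifies the extremal configuration $\mu_2=\cdots=\mu_n$ that a direct constrained optimization would produce), and it correctly pins down the constant $\sqrt{(n-1)/n}$.
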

By Lemma \ref{lagrange}, it follows from (\ref{F}) that
\begin{align}
 \frac{1}{2}\triangle |\omega|^2 \geq |\nabla \omega|^2 -\frac{p|n-2p|}{n-2}\sqrt{\frac{n-1}{n}}|E||\omega|^2+ \frac{p(n-p)}{n(n-1)}R |\omega|^2, \label{laplacian2}
\end{align}
where $|E|$ is the norm of the traceless Ricci tensor $E\, .$ On the other hand, we have
\begin{equation*}
 \frac{1}{2}\triangle |\omega|^2  = |\omega| \triangle |\omega| + |\nabla|\omega||^2.
\end{equation*}
Combining this with (\ref{laplacian2}) and the refined  Kato's inequality (\cite{CGH})
  \begin{equation}
 |\nabla \omega|^2 - |\nabla|\omega||^2 \geq  K_p |\nabla|\omega||^2,   \label{Koto}
 \end{equation}
  where
\begin{equation*}
K_p =\begin{cases}
\frac{1}{n-p}  & \text{if $1 \leq p \leq n/2 $},\\
\frac{1}{p}    & \text{if $n/2 \leq p \leq n-1 $},
\end{cases}
\end{equation*}
we conclude that
\begin{equation}
 |\omega| \triangle |\omega| \geq  K_p |\nabla|\omega||^2 -\frac{p|n-2p|}{n-2}\sqrt{\frac{n-1}{n}}|E||\omega|^2+ \frac{p(n-p)}{n(n-1)}R |\omega|^2. \label{laplacian3}
\end{equation}

Now, using the  inequality (\ref{laplacian2}) to compact locally conformally flat Riemannian manifold, we have the following theorem, generalizing Bourguignon's result \cite{Bo}, for the case $R(x) > 0$ for every $x \in M$ and $p=m= \frac{n}{2}\, .$
\begin{theorem}
\label{the-compact}
 Let $(M^n, g)$ be a compact  locally conformally flat Riemannian manifold satisfying
\begin{equation}
R (x) \geq \sqrt{\frac{n-1}{n}}\frac{n(n-1)|n-2p|}{(n-p)(n-2)}|E|(x) \label{R-E}
\end{equation}
for every  $x \in M$, $1 \leq p \leq n$. Assume that $(\ref{R-E})$ is  strict at some point. Then  the Betti number $\beta_p(M)= 0$. In particular, if  $M$ is a $2m$-dimensional compact LCF Riemannian manifold with nonnegative scalar curvature  $R\geq  0$, and $R  > 0$ holds at some point, then $\beta_m(M)= 0$.
\end{theorem}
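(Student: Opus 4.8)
The plan is to combine the pointwise Weitzenb\"ock estimate (\ref{laplacian2}) with Hodge theory and integration over the closed manifold. Since $M$ is compact, the Hodge theorem identifies $\beta_p(M)$ with the dimension of the space of harmonic $p$-forms, so it suffices to show that every harmonic $p$-form $\omega$ vanishes identically. Fix such an $\omega$. Because $M$ is closed, integrating (\ref{laplacian2}) and using the divergence theorem to annihilate the left-hand side, $\int_M \tfrac12\triangle|\omega|^2\,dv=0$, yields
\begin{equation*}
\int_M |\nabla\omega|^2\,dv + \int_M \left[\frac{p(n-p)}{n(n-1)}R - \frac{p|n-2p|}{n-2}\sqrt{\frac{n-1}{n}}\,|E|\right]|\omega|^2\,dv \leq 0 .
\end{equation*}

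The key observation is that the pinching hypothesis (\ref{R-E}) is precisely the assertion that the bracketed coefficient above is pointwise nonnegative. Indeed, multiplying (\ref{R-E}) by the positive factor $\tfrac{p(n-p)}{n(n-1)}$ and invoking the algebraic identity
\begin{equation*}
\frac{p(n-p)}{n(n-1)}\cdot \sqrt{\frac{n-1}{n}}\,\frac{n(n-1)|n-2p|}{(n-p)(n-2)} = \frac{p|n-2p|}{n-2}\sqrt{\frac{n-1}{n}}
\end{equation*}
shows that $\tfrac{p(n-p)}{n(n-1)}R - \tfrac{p|n-2p|}{n-2}\sqrt{\tfrac{n-1}{n}}\,|E| \geq 0$ everywhere. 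Hence the displayed integrand is a sum of two nonnegative terms whose total integral is $\leq 0$, forcing both terms to vanish identically. From $|\nabla\omega|^2\equiv 0$ we conclude that $\omega$ is parallel, so $|\omega|$ is constant on $M$. By assumption (\ref{R-E}) is strict at some point $x_0$, where the bracketed coefficient is strictly positive; the vanishing of the zeroth-order term there forces $|\omega|(x_0)=0$, and the constancy of $|\omega|$ then gives $\omega\equiv 0$. Thus $\beta_p(M)=0$.

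I expect no serious analytic obstacle once (\ref{laplacian2}) is available; the only points demanding care are verifying the constant identity above so that the pinching condition matches the Bochner coefficient exactly, and exploiting the strict point through the parallelism of $\omega$ rather than through any maximum principle. For the final assertion I would specialize to $n=2m$ and $p=m$. Then $n-2p=0$, the traceless Ricci term drops out, and (\ref{R-E}) reduces to $R\geq 0$ with strictness becoming $R>0$ at some point; the general statement then yields $\beta_m(M)=0$, recovering Bourguignon's theorem.
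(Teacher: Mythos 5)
Your proposal is correct and follows essentially the same route as the paper: both rest on the Weitzenb\"ock estimate (\ref{laplacian2}) together with the observation that the pinching hypothesis (\ref{R-E}) makes the zeroth-order coefficient $\frac{p(n-p)}{n(n-1)}R-\frac{p|n-2p|}{n-2}\sqrt{\frac{n-1}{n}}|E|$ pointwise nonnegative, and both exploit the strict inequality at one point to kill the (constant) norm $|\omega|$. The only cosmetic difference is that you close the argument by integrating the subharmonicity inequality over the closed manifold and using the divergence theorem, whereas the paper applies the maximum principle to the subharmonic function $|\omega|^2$; these are equivalent here.
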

\begin{proof}
For any given harmonic $p$-form $\omega$, we have via (\ref{laplacian2}) and the hypothesis (\ref{R-E}) on the scalar curvature $R$,
\begin{align}
\frac{1}{2}\triangle |\omega|^2 \ge |\nabla \omega|^2 +\Big[ \frac{p(n-p)}{n(n-1)}R-\frac{p|n-2p|}{n-2}\sqrt{\frac{n-1}{n}}|E|\Big] |\omega|^2 \geq 0. \label{thm01}
\end{align}
 By the compactness of   $M$  and the maximum principle, $|\omega|=\mbox{constant}$. Substituting this into (\ref{thm01}) and using the  hypothesis on $R$ again,  we have $\omega=0$. Therefore, by Hodge's Theorem, $\beta_p(M)= 0$.
\end{proof}

\begin{remark}
It is well known that  a compact orientable conformally flat Riemannian
manifold with positive Ricci curvature must satisfy $\beta_p(M)= 0$ for all $1\leq p \leq  n-1$.
\end{remark}
\begin{theorem}
\label{thm-Ric}
 Let $(M^n, g)$, $n \geq 3$,  be a complete non-compact, simply connected,    locally conformally flat Riemannian manifold. Then there exists a constant $C>0$ such that if
\begin{equation}
\int_M |\mbox{Ric}|^{\frac{n}{2}}dv< C, \label{Lp-Ric}
\end{equation}
then for every $0 \leq p \leq n$, every harmonic $p$-form  $\omega$ on $M$ with
$\underset{r\rightarrow \infty}{\liminf} \frac{1}{r^2} \int_{B_{x_0}(r)} |\omega|^2 dv =0$
vanishes identically. In particular,  $H^p(L^2(M))= \{0\}$.
\end{theorem}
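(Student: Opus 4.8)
The plan is to run a Bochner argument from the refined inequality (\ref{laplacian3}), use a Sobolev inequality to absorb the curvature term, and then a cut-off argument to exploit the growth hypothesis. Write $u=|\omega|$. Since $|E|\le|\mbox{Ric}|$ and $R^2=n(|\mbox{Ric}|^2-|E|^2)\le n|\mbox{Ric}|^2$, both the traceless Ricci norm and $|R|$ are dominated by $|\mbox{Ric}|$, so (\ref{laplacian3}) reduces to
\begin{equation*}
u\triangle u \ \ge\ K_p|\nabla u|^2 - c_{n,p}\,|\mbox{Ric}|\,u^2,
\end{equation*}
where $c_{n,p}=\frac{p|n-2p|}{n-2}\sqrt{\frac{n-1}{n}}+\frac{p(n-p)}{n(n-1)}\sqrt{n}$ depends only on $n$ and $p$. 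For $p=0$ the curvature term is absent and $p=n$ reduces to $p=0$ by Hodge duality (the star operator is an isometry preserving harmonicity and pointwise norm), so I would concentrate on $1\le p\le n-1$, where the refined Kato constant $K_p$ in (\ref{Koto}) is available.

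First I would produce a clean Sobolev inequality. Because $M$ is simply connected and LCF, the Yamabe inequality (\ref{Yamabe}) holds with the sharp constant $Q(\mathbb{S}^n)$. Applying H\"older to the scalar-curvature term, $\frac{n-2}{4(n-1)}\int_M R f^2\le \frac{(n-2)\sqrt n}{4(n-1)}\big(\int_M|\mbox{Ric}|^{\frac n2}\big)^{\frac2n}\big(\int_M f^{\frac{2n}{n-2}}\big)^{\frac{n-2}{n}}$, so once $\int_M|\mbox{Ric}|^{\frac n2}$ is small this term is absorbed on the left and yields
\begin{equation*}
C_s\Big(\int_M f^{\frac{2n}{n-2}}dv\Big)^{\frac{n-2}{n}}\ \le\ \int_M|\nabla f|^2\,dv,\qquad \forall\, f\in C_0^\infty(M),
\end{equation*}
with the explicit positive constant $C_s=Q(\mathbb{S}^n)-\frac{(n-2)\sqrt n}{4(n-1)}\big(\int_M|\mbox{Ric}|^{\frac n2}\big)^{\frac2n}$ depending only on $n$ and the (small) Ricci integral. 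This is an inequality of the form (\ref{finite-scalar}), so the volume lower bound (\ref{v-growth}) holds, $\mbox{vol}(B_{x_0}(\rho))\ge C_0\rho^n$; in particular $M$ has infinite volume.

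Next I would multiply the reduced Bochner inequality by $\phi^2$ for $\phi\in C_0^\infty(M)$, integrate by parts via $\int_M\phi^2 u\triangle u=-\int_M\phi^2|\nabla u|^2-2\int_M\phi u\langle\nabla\phi,\nabla u\rangle$, and apply Young's inequality to the cross term, giving for small $\epsilon>0$
\begin{equation*}
(1+K_p-\epsilon)\int_M\phi^2|\nabla u|^2\ \le\ c_{n,p}\int_M|\mbox{Ric}|(\phi u)^2+\tfrac1\epsilon\int_M u^2|\nabla\phi|^2.
\end{equation*}
Bounding $\int_M|\mbox{Ric}|(\phi u)^2$ by H\"older and then the Sobolev inequality above, and expanding $|\nabla(\phi u)|^2\le(1+\delta)\phi^2|\nabla u|^2+(1+\tfrac1\delta)u^2|\nabla\phi|^2$, the Ricci term contributes a multiple of $\tfrac{c_{n,p}}{C_s}\big(\int_M|\mbox{Ric}|^{\frac n2}\big)^{\frac2n}$ times $\int_M\phi^2|\nabla u|^2$. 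Choosing $\epsilon,\delta$ small and requiring $\int_M|\mbox{Ric}|^{\frac n2}<C$ for a threshold $C=C(n)$ (the minimum over the finitely many $p$) that keeps this coefficient strictly below $1+K_p-\epsilon$, I can absorb the gradient term to obtain $c_1\int_M\phi^2|\nabla u|^2\le c_2\int_M u^2|\nabla\phi|^2$ with $c_1>0$.

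Finally I would insert the cut-off $\phi\equiv1$ on $B_{x_0}(r)$, $\phi\equiv0$ off $B_{x_0}(2r)$, $|\nabla\phi|\le 2/r$, yielding $\int_{B_{x_0}(r)}|\nabla u|^2\le\frac{4c_2}{c_1 r^2}\int_{B_{x_0}(2r)}u^2$. The hypothesis $\liminf_{r\to\infty}\frac1{r^2}\int_{B_{x_0}(r)}u^2=0$ drives the right-hand side to $0$ along a sequence $r\to\infty$, so $\nabla u\equiv0$ and $u$ is a constant $c\ge0$; if $c>0$ the volume bound gives $\frac1{r^2}\int_{B_{x_0}(r)}u^2\ge c^2C_0r^{n-2}\to\infty$, contradicting the growth hypothesis, whence $u\equiv0$ and $\omega\equiv0$. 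Since any $\omega\in H^p(L^2(M))$ satisfies $\frac1{r^2}\int_{B_{x_0}(r)}|\omega|^2\le\frac1{r^2}\|\omega\|_{L^2}^2\to0$, this gives $H^p(L^2(M))=\{0\}$. The main obstacle is the simultaneous absorption in the third step: the refined Kato inequality (supplying the extra $K_p|\nabla u|^2$) together with the sharp Yamabe constant must be combined so that, below an explicit $n$-dependent threshold on $\int_M|\mbox{Ric}|^{\frac n2}$, both the Sobolev-controlled Ricci term and the gradient error are dominated by $(1+K_p)\int_M\phi^2|\nabla u|^2$; tracking $\epsilon,\delta$ and the dependence of $C_s$ on the Ricci integral is the delicate bookkeeping, with the borderline degrees $p=0,n$ needing the separate/duality argument noted above.
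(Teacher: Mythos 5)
Your proposal is correct and the core of the argument --- the Weitzenb\"ock formula with the refined Kato inequality, a H\"older--Sobolev absorption of the Ricci term, a Caccioppoli cut-off estimate, and the volume lower bound $\mbox{vol}(B_{x_0}(r))\geq Cr^n$ to rule out a nonzero constant $|\omega|$ --- is the same as in the paper; the paper likewise treats $p=0$ separately (citing a Liouville result for harmonic functions) and $p=n$ by Hodge duality. The one genuine divergence is how you obtain the $L^2$-Sobolev inequality: the paper only uses $\int_M|R|^{n/2}<\infty$ to absorb the scalar curvature term of the Yamabe inequality outside a large compact set, and then invokes Carron's theorem to extend the Sobolev inequality to all of $M$ with a uniform but non-explicit constant $C_s$ --- which is precisely why the paper remarks that the threshold $C$ in the hypothesis cannot be computed explicitly. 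You instead exploit the smallness of $\int_M|\mbox{Ric}|^{n/2}$ (which is the hypothesis anyway) to absorb the scalar curvature term globally, getting $C_s=Q(\mathbb{S}^n)-\frac{(n-2)\sqrt n}{4(n-1)}\bigl(\int_M|\mbox{Ric}|^{n/2}\bigr)^{2/n}$ directly; this avoids Carron's theorem and, since $C_s\to Q(\mathbb{S}^n)$ as the Ricci integral shrinks, there is no circularity in choosing the threshold, so your route actually yields an explicit admissible constant $C(n)$ where the paper's does not. The remaining differences (starting from the inequality in terms of $|E|$ and $R$ rather than the one in terms of $\mbox{Ric}$, hence a slightly different constant $c_{n,p}$, and the exact bookkeeping of $\epsilon$ and $\delta$) are cosmetic.
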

\begin{proof}
Let $\omega $ be a harmonic $p$-form on $M$ for $0 \leq p \leq n$ with $\underset{r\rightarrow \infty}{\liminf} \,  \frac{1}{r^2} \int_{B_{x_0}(r)} |\omega|^2 dv =0$.
 When $1 \leq p \leq n-1$, by (\ref{F0}) and using the fact that $R^2 \leq n|\mbox{Ric}|^2 $, we have
\begin{align}
\nonumber \frac{1}{2}\triangle |\omega|^2 \geq  |\nabla \omega|^2- \frac{p |n-2p|}{n-2}|\mbox{Ric}| |\omega|^2 -\frac{p(p-1)\sqrt{n}}{(n-1)(n-2)}|\mbox{Ric}||\omega|^2.
\end{align}
Combining this with (\ref{Koto}), we deduce that
\begin{align}
 |\omega| \triangle |\omega| + \frac{p}{n-2}\bigg (|n-2p| +\frac{(p-1)\sqrt{n}}{n-1}\bigg )|\mbox{Ric}||\omega|^2 \geq K_p|\nabla|\omega||^2. \label{Ricci-laplacian}
\end{align}
 Fix a point $x_0\in M$ and let $\rho(x)$ be the geodesic distance on $M$ from $x_0$ to $x$. Let us choose  $\eta \in C^{\infty}_0(M)$ satisfying
\begin{equation*}
\eta(x)= \begin{cases}
1  & \text{if\  $\rho(x) \leq r$},\\
0   & \text{if \  $2 r < \rho(x) $}
\end{cases}
\end{equation*}
and
\begin{equation}
  | \nabla \eta|(x) \leq \frac{1}{r} \ \ \mbox{if} \ \  r < \rho(x) \leq 2r \label{cut-off}
\end{equation}
for $r>0$. Multiplying (\ref{Ricci-laplacian}) by $\eta^2$ and integrating by parts  over $M$, we obtain
\begin{align}
\nonumber 0 \leq& \int_M (\eta^2 |\omega| \triangle |\omega|  - K_p \eta^2 |\nabla |\omega||^2)dv
+ \frac{p}{n-2}\bigg (|n-2p| +\frac{(p-1)\sqrt{n}}{n-1}\bigg )\int_M |\mbox{Ric}|\eta^2|\omega|^2 dv\\
\nonumber =& -2\int_M \eta |\omega| \langle \nabla\eta, \nabla |\omega| \rangle dv - (1+ K_p) \int_M \eta^2 |\nabla |\omega||^2dv \\
& + \frac{p}{n-2}\bigg (|n-2p| +\frac{(p-1)\sqrt{n}}{n-1}\bigg )\int_M |\mbox{Ric}|\eta^2|\omega|^2 dv. \label{thm40}
\end{align}
By the hypothesis (\ref{Lp-Ric}), we have
\begin{equation*}
\int_M |R|^{\frac n 2}dv \leq n^{n/4}\int_M |\mbox{Ric}|^{\frac n 2}dv< \infty,
\end{equation*}
which implies that  the $L^2$-Sobolev inequality (\ref{finite-scalar}) holds  for some constant $C_s>0$. Hence it follows from (\ref{finite-scalar}) and the H\"{o}lder  inequality that
\begin{align}
\nonumber \int_M |\mbox{Ric}|\eta^2|\omega|^2 dv \leq& \left(\int_M |\mbox{Ric}|^{\frac{n}{2}}dv\right)^{\frac{2}{n}}
     \left(\int_M (\eta |\omega|)^{\frac{2n}{n-2}}dv\right)^{\frac{n-2}{n}}\\
\nonumber \leq&  R(\eta) \int_M |\nabla (\eta |\omega|)|^2dv\\
\nonumber =&  R(\eta)\int_M (\eta^2|\nabla |\omega||^2+ |\omega|^2|\nabla \eta|^2) dv \\
& +2 R(\eta) \int_M \eta|\omega|\langle \nabla\eta, \nabla |\omega|\rangle dv, \label{L-2Ric}
\end{align}
where $R(\eta) =\frac {1}{C_s}\left(\int_M |\mbox{Ric}|^{\frac{n}{2}}dv\right)^{\frac{2}{n}}$.
Substituting (\ref{L-2Ric}) into (\ref{thm40}) yields
\begin{align}
\nonumber 0 \leq&  (2A-2) \int_M \eta |\omega| \langle \nabla\eta, \nabla |\omega| \rangle dv - (1+ K_p- A) \int_M \eta^2 |\nabla |\omega||^2dv + A\int_M  |\omega|^2|\nabla \eta|^2 dv\\
\nonumber \leq& (- 1- K_p+ A + |A-1|\epsilon) \int_M \eta^2 |\nabla |\omega||^2dv + \left(A + \frac{|A-1|}{\epsilon}\right)\int_M  |\omega|^2|\nabla \eta|^2 dv
\end{align}
for all $\epsilon>0$, where
\begin{equation*}
A=\frac{p}{n-2}\bigg (|n-2p| +\frac{(p-1)\sqrt{n}}{n-1}\bigg ) R(\eta) .
\end{equation*}
Now let us choose  the integral bound $C$ in (\ref{Lp-Ric}) satisfying
\begin{equation*}
C^{\frac{2}{n}}=\frac{n-2}{p}\bigg (|n-2p| +\frac{(p-1)\sqrt{n}}{n-1}\bigg )^{-1}(1+ K_p)C_s.
\end{equation*}
Then we can take sufficiently small $\epsilon >0$ such that $1+ K_p- A- |A-1|\epsilon >0$. Therefore,
\begin{align*}
\nonumber    (1+ K_p- A- |A-1|\epsilon) \int_{B_{x_0}(r)} |\nabla |\omega||^2dv \leq&
(1+ K_p- A-|A-1|\epsilon) \int_M \eta^2 |\nabla |\omega||^2dv \\
\nonumber  \leq& \left(A + \frac{|A-1|}{\epsilon}\right)\int_M  |\omega|^2|\nabla \eta|^2 dv\\
\leq& \frac{\epsilon A + |A-1|}{\epsilon r^2}\int_{B_{x_0}(2r)}  |\omega|^2 dv.
\end{align*}
 Letting $r \rightarrow \infty$, we have $\nabla |\omega|=0$ on $M$, i.e., $|\omega|$ is constant. Since $\underset{r\rightarrow \infty}{\liminf}  \frac{1}{r^2} \int_{B_{x_0}(r)} |\omega|^2 dv =0$ and the volume growth (\ref{v-growth}) implies $\frac{\mbox{vol}(B_{x_0}(r))}{r^2} \geq  C r^{n-2} \rightarrow \infty$ as $r \rightarrow \infty$, we conclude   that $\omega=0$.

When $p=0$,  let $f$ be a harmonic function with $\underset{r\rightarrow \infty}{\liminf} \,  \frac{1}{r^2} \int_{B_{x_0}(r)} |\omega|^2 dv =0$. According to \cite{W1}, $f$ is  constant. Since $\frac{\mbox{vol}(B_{x_0}(r))}{r^2} \geq  C r^{n-2}$, we have $f=0$.
  When $p=n$, we consider $\ast \omega$, where $\ast$ is the Hodge Star. Then $\ast \omega$ is a harmonic function
with $|\omega|= |\ast \omega|$. By the previous result, $\ast \omega=0$ and so is $\omega=0$. It follows that
$H^p(L^2(M))=\{0\}$ for all $0 \leq p \leq n$. This completes the proof.
\end{proof}
\begin{remark}
Since the  constant $C_s$ in the   Sobolev inequality $(\ref{finite-scalar})$ can not be explicitly computed,
we can't also give the explicit value of $C$ in $(\ref{Lp-Ric})$.
\end{remark}

\begin{theorem}
\label{thm-global-pinching}
 Let $(M^n, g)$, $n\geq 3$, be a complete non-compact, simply connected, locally conformally flat Riemannian manifold with $R \geq 0$. Assume that
 \begin{equation}
\Big(\int_M |E|^{\frac{n}{2}}dv\Big)^{\frac{2}{n}} < C(p), \label{gap}
\end{equation}
where $C(p)=\frac{(n-2)\sqrt{n}}{p|n-2p|\sqrt{n-1}} \min \Big\{1+ K_p, \frac{4p(n-p)}{n(n-2)}\Big\} Q(\mathbb{S}^n)$ for every $1\leq p \leq n-1$ but $p\neq \frac{n}{2}$.  Then every harmonic $p$-form  $\omega$ on $M$ with
$\underset{r\rightarrow \infty}{\liminf} \,  \frac{1}{r^2} \int_{B_{x_0}(r)} |\omega|^2 dv =0$
vanishes identically. In particular,  $H^p(L^2(M))= \{0\}$ for $1\leq p \leq n-1$ but $p\neq \frac{n}{2}$.
\end{theorem}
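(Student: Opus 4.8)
The plan is to mimic the proof of Theorem \ref{thm-Ric}, but to retain and exploit the nonnegative scalar-curvature term instead of discarding $R$, and to invoke the full Yamabe inequality (\ref{Yamabe}) in place of the pure Sobolev inequality (\ref{finite-scalar}). Write $a=\frac{p|n-2p|}{n-2}\sqrt{\frac{n-1}{n}}$ and $b=\frac{p(n-p)}{n(n-1)}$ for the coefficients of the $|E|$- and $R$-terms in the refined Bochner inequality (\ref{laplacian3}), and set $\Lambda=\big(\int_M|E|^{\frac n2}dv\big)^{\frac 2n}$. Fix a harmonic $p$-form $\omega$ with $1\le p\le n-1$, $p\neq \frac n2$, satisfying the growth hypothesis, and take the cut-off $\eta$ used in the proof of Theorem \ref{thm-Ric}, i.e. $\eta\equiv1$ on $B_{x_0}(r)$, $\mathrm{supp}\,\eta\subset B_{x_0}(2r)$ and $|\nabla\eta|\le 1/r$ as in (\ref{cut-off}).

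First I would multiply (\ref{laplacian3}) by $\eta^2$, integrate over $M$, and integrate by parts on the left exactly as in (\ref{thm40}), arriving at $a\int_M\eta^2|E||\omega|^2\ge(1+K_p)\int_M\eta^2|\nabla|\omega||^2+2\int_M\eta|\omega|\langle\nabla\eta,\nabla|\omega|\rangle+b\int_M R\eta^2|\omega|^2$. Next I would estimate the left-hand side by H\"older's inequality followed by the Yamabe inequality (\ref{Yamabe}) applied to $f=\eta|\omega|$; this bounds $\big(\int_M(\eta|\omega|)^{\frac{2n}{n-2}}dv\big)^{\frac{n-2}{n}}$ by $Q(\mathbb{S}^n)^{-1}\big[\int_M|\nabla(\eta|\omega|)|^2+\frac{n-2}{4(n-1)}\int_M R\eta^2|\omega|^2\big]$ and introduces the single parameter $\mu:=a\Lambda/Q(\mathbb{S}^n)$. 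Expanding $|\nabla(\eta|\omega|)|^2$ and collecting terms yields $(1+K_p-\mu)\int_M\eta^2|\nabla|\omega||^2+2(1-\mu)\int_M\eta|\omega|\langle\nabla\eta,\nabla|\omega|\rangle+\big(b-\mu\frac{n-2}{4(n-1)}\big)\int_M R\eta^2|\omega|^2\le\mu\int_M|\omega|^2|\nabla\eta|^2$. The whole purpose of $C(p)$ is that the hypothesis (\ref{gap}) is exactly the pair of strict inequalities $\mu<1+K_p$ and $\mu<\frac{4p(n-p)}{n(n-2)}$, the latter being equivalent to $b-\mu\frac{n-2}{4(n-1)}>0$; since $R\ge0$, that scalar term is nonnegative and may be dropped. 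A Young estimate on the cross term, with $\epsilon$ chosen so small that $1+K_p-\mu-\epsilon|1-\mu|>0$, then gives $\int_{B_{x_0}(r)}|\nabla|\omega||^2\le \mathrm{const}\cdot r^{-2}\int_{B_{x_0}(2r)}|\omega|^2$; letting $r\to\infty$ and using $\liminf_{r\to\infty} r^{-2}\int_{B_{x_0}(r)}|\omega|^2=0$ forces $\nabla|\omega|\equiv0$, so $|\omega|$ is a constant.

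The main obstacle is the final step, concluding that this constant vanishes. Unlike the case $R\le0$, where one discards $R$ in (\ref{Yamabe}) to obtain the pure Sobolev inequality (\ref{finite-scalar}) and hence the Euclidean bound (\ref{v-growth}), here the scalar term blocks that route. The key observation I would use is that constancy of $|\omega|$ feeds back into the pointwise Bochner inequality (\ref{laplacian2}): since $\triangle|\omega|^2=0$ and $|\nabla\omega|^2\ge0$, supposing $\omega\not\equiv0$ and dividing by $|\omega|^2$ one reads off $bR\le a|E|$, i.e. $R\le\frac{a}{b}|E|$ everywhere on $M$. Substituting this into the $R$-term of (\ref{Yamabe}) and using H\"older once more absorbs the scalar curvature and produces a genuine $L^2$-Sobolev inequality $\big(Q(\mathbb{S}^n)-\frac{n-2}{4(n-1)}\frac{a}{b}\Lambda\big)\big(\int_M f^{\frac{2n}{n-2}}dv\big)^{\frac{n-2}{n}}\le\int_M|\nabla f|^2$ for all $f\in C_0^\infty(M)$, whose constant is positive precisely because (\ref{gap}) gives $\Lambda<\frac{4(n-p)}{|n-2p|\sqrt{n(n-1)}}Q(\mathbb{S}^n)$, the same second entry of the minimum in $C(p)$. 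This is an inequality of the form (\ref{finite-scalar}), so (\ref{v-growth}) yields $\mathrm{vol}(B_{x_0}(r))\ge Cr^n$; then $r^{-2}\int_{B_{x_0}(r)}|\omega|^2=|\omega|^2\,\mathrm{vol}(B_{x_0}(r))/r^2\ge C|\omega|^2r^{n-2}\to\infty$ as $r\to\infty$, contradicting the growth hypothesis. Hence $\omega\equiv0$, and the statement $H^p(L^2(M))=\{0\}$ follows since every $L^2$ harmonic form satisfies $\liminf_{r\to\infty} r^{-2}\int_{B_{x_0}(r)}|\omega|^2=0$. The only remaining work is the two routine algebraic verifications that (\ref{gap}) translates into the stated bounds on $\mu$ and into positivity of the Sobolev constant.
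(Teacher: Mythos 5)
Your proposal is correct, and its main engine --- the Caccioppoli-type estimate obtained by multiplying (\ref{laplacian3}) by $\eta^2$, integrating by parts, and absorbing the $|E|$-term via H\"older plus the Yamabe inequality (\ref{Yamabe}) applied to $\eta|\omega|$ --- is exactly the paper's argument; your $\mu$ is the paper's $\frac{p|n-2p|}{n-2}\sqrt{\tfrac{n-1}{n}}\,T(\eta)$, and your translation of (\ref{gap}) into the two strict inequalities on $\mu$ matches the paper's conditions $B>0$ and $D<0$. The one place you genuinely diverge is the endgame. The paper does \emph{not} discard the scalar term: it keeps $D\int R\eta^2|\omega|^2$ with $D<0$ in the final inequality, so that letting $r\to\infty$ yields \emph{both} $\nabla|\omega|=0$ and $R|\omega|=0$ at once; then a nonzero constant $|\omega|$ would force $R\equiv 0$, whence the clean Sobolev inequality (\ref{R-negative1}) and the volume growth (\ref{v-growth}) give the contradiction. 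You instead drop the (nonnegative) scalar term, obtain only $\nabla|\omega|=0$, and recover the missing information from the pointwise Bochner inequality (\ref{laplacian2}): for a nonzero constant $|\omega|$ it forces $\frac{p(n-p)}{n(n-1)}R\le \frac{p|n-2p|}{n-2}\sqrt{\tfrac{n-1}{n}}|E|$ everywhere, which you feed back into (\ref{Yamabe}) with one more H\"older step to manufacture a Sobolev inequality with positive constant --- the positivity being precisely the second entry of the minimum in $C(p)$, the same entry the paper uses to get $D<0$. Both routes are valid and use the hypothesis in equivalent ways; the paper's is a line shorter, while yours makes transparent why that second entry of the minimum is the sharp threshold for the volume-growth contradiction. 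The remaining algebraic verifications you defer are routine and check out.
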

\begin{proof}
Let $\omega $ be a harmonic $p$-form on $M$  with $\underset{r\rightarrow \infty}{\liminf} \,  \frac{1}{r^2} \int_{B_{x_0}(r)} |\omega|^2 dv =0$.  Let $\eta \in C^{\infty}_0(M)$ be a smooth function on $M$ with compact support.
Multiplying (\ref{laplacian3}) by $\eta^2$ and integrating over $M$, we obtain
\begin{align}
\nonumber \int_M \eta^2 |\omega| \triangle |\omega|dv \geq&  K_p \int_M \eta^2 |\nabla |\omega||^2 dv
                 - \frac{p|n-2p|}{n-2}\sqrt{\frac{n-1}{n}}\int_M |E|\eta^2 |\omega|^2 dv\\
  &+ \frac{p(n-p)}{n(n-1)}\int_M R \eta^2 |\omega|^2 dv. \label{integral}
\end{align}
Integrating by parts and using the Cauchy-Schwarz inequality gives
\begin{align}
\nonumber \int_M \eta^2 |\omega| \triangle |\omega|dv =& -2\int_M \eta |\omega| \langle \nabla\eta, \nabla |\omega| \rangle dv- \int_M \eta^2 |\nabla |\omega||^2dv \\
 \nonumber \leq &  (b- 1 )\int_M \eta^2 |\nabla |\omega||^2dv + \frac{1}{b}\int_M |\omega|^2 |\nabla \eta|^2dv
\end{align}
for all $b>0$.
Substituting the above inequality into (\ref{integral}) yields
\begin{align}
\nonumber ( 1+ K_p-b )\int_M \eta^2 |\nabla |\omega||^2dv  \leq &   \frac{1}{b}\int_M |\omega|^2 |\nabla \eta|^2dv
    +\frac{p|n-2p|}{n-2}\sqrt{\frac{n-1}{n}}\int_M |E|\eta^2 |\omega|^2 dv\\
    &        - \frac{p(n-p)}{n(n-1)}\int_M R \eta^2 |\omega|^2 dv. \label{414}
\end{align}
 On the other hand, using (\ref{Yamabe}) together with  the H\"{o}lder   and  Cauchy-Schwarz inequalities, we have
\begin{align*}
\nonumber \int_M |E|\eta^2 |\omega|^2 dv \leq& \left(\int_{\mbox{supp}(\eta)} |E|^{\frac{n}{2}}dv\right)^{\frac{2}{n}} \left(\int_M (\eta
           |\omega|)^{\frac{2n}{n-2}}dv\right)^{\frac{n-2}{n}}\\
\nonumber \leq&  \frac{1}{Q(\mathbb{S}^n)}\left(\int_{\mbox{supp}(\eta)} |E|^{\frac{n}{2}}dv \right)^{\frac{2}{n}}
    \int_M \Big[|\nabla (\eta |\omega|)|^2 + \frac{n-2}{4(n-1)}R\eta^2 |\omega|^2 \Big]dv \\
\nonumber=& T(\eta)\int_M \Big[\eta^2|\nabla |\omega||^2+|\omega|^2|\nabla \eta|^2 +\frac{n-2}{4(n-1)} R\eta^2 |\omega|^2\Big] dv\\
\nonumber &+2T(\eta) \int_M \eta |\omega|\langle \nabla\eta, \nabla |\omega| \rangle dv\\
 \leq & T(\eta)\int_M \Big[(1+ \gamma)\eta^2|\nabla |\omega||^2+ \big(1+ \frac{1}{\gamma}\big)|\omega|^2|\nabla \eta|^2 +\frac{n-2}{4(n-1)} R\eta^2 |\omega|^2\Big] dv
\end{align*}
for all $\gamma>0$, where $\mbox{supp}(\eta)$ is the support of $\eta$ on $M$, and $T(\eta)= \frac{1}{Q(\mathbb{S}^n)}(\int_{\mbox{supp}(\eta)} |E|^{\frac{n}{2}}dv)^{\frac{2}{n}}$.
Substituting the above inequality into (\ref{414}), we conclude that
\begin{align}
B \int_M \eta^2 |\nabla |\omega||^2dv \leq C \int_M  |\omega|^2|\nabla \eta|^2 dv+ D \int_M R \eta^2 |\omega|^2 dv, \label{cacciop}
\end{align}
where
\begin{align*}
 B=& 1 + K_p- b-\frac{p|n-2p|}{n-2}\sqrt{\frac{n-1}{n}}T(\eta)(1+\gamma),\\
C=& \frac{1}{b} + \frac{p|n-2p|}{n-2}\sqrt{\frac{n-1}{n}}T(\eta)\big(1+ \frac{1}{\gamma}\big),\\
D=& \frac{p|n-2p|}{4(n-1)}\sqrt{\frac{n-1}{n}}T(\eta)-\frac{p(n-p)}{n(n-1)}.
\end{align*}
It follows from  the hypothesis (\ref{gap}) that for $1\leq p \leq n-1$ but $p\neq \frac{n}{2}\, ,$
 \begin{equation*}
T(\eta)= \frac{1}{Q(\mathbb{S}^n)}\Big(\int_{\mbox{supp}(\eta)} |E|^{\frac{n}{2}}dv\Big)^{\frac{2}{n}}
  < \frac{n-2}{p|n-2p|}\sqrt{\frac{n}{n-1}} \min \Big\{1+ K_p, \frac{4p(n-p)}{n(n-2)}\Big \},
\end{equation*}
which implies that $D< 0$ and $1+ K_p- \frac{p|n-2p|}{n-2}\sqrt{\frac{n-1}{n}}T(\eta)>0$. Hence we can choose $\gamma$  and $b$ small enough such that
\begin{equation*}
B= 1 + K_p- b-\frac{p|n-2p|}{n-2} \sqrt{\frac{n-1}{n}}T(\eta)(1+\gamma)>0.
\end{equation*}

Let $\eta$ be the cut-off function defined by (\ref{cut-off}).
Substituting $\eta$ into  (\ref{cacciop}) and noting the hypothesis $R \geq 0$, we have
\begin{align*}
  B \int_{B_{x_0}(r)}  |\nabla |\omega||^2dv \leq&  B \int_M \eta^2 |\nabla |\omega||^2dv \\
   \leq& \frac{C}{r^2} \int_{B_{x_0}(2r)} |\omega|^2  dv+ D \int_{B_{x_0}(2r)} R |\omega|^2 dv. \label{416}
\end{align*}
Letting $r \rightarrow \infty$, and noting  $\underset{r\rightarrow \infty}{\liminf}\,  \frac{1}{r^2} \int_{B_{x_0}(r)} |\omega|^2 dv =0$, we conclude that
\begin{equation*}
\nabla |\omega|=0 \ \ \ \mbox{and} \ \ \ R|\omega|=0
\end{equation*}
on $M\, .$ Hence, $|\omega|=\mbox{constant}$. If $|\omega|$ is not identically
zero, then $R=0$,  which implies that the $L^2$-Sobolev  inequality (\ref{R-negative1}) holds,  and $\frac{\mbox{vol}(B_{x_0}(r))}{r^2} \geq  C r^{n-2} \rightarrow \infty$ as $r \rightarrow \infty$. This would contradict $\underset{r\rightarrow \infty}{\liminf} \,  \frac{1}{r^2} \int_{B_{x_0}(r)} |\omega|^2 dv =0$. Therefore,  $\omega=0$. It follows that
$H^p(L^2(M))=\{0\}$  for $1\leq p \leq n-1$ but $p\neq \frac{n}{2}$. This completes the proof.
\end{proof}

For the middle degree case, we deduce the following vanishing theorem without  assumptions on $E$.

\begin{theorem}
\label{thm-gap-m}
 Let $(M^n, g)$, $n=2m>3$, be a complete non-compact, simply connected, locally conformally flat Riemannian manifold with $R\geq 0$. Then every harmonic $m$-form  $\omega$ on $M$ with
$\underset{r\rightarrow \infty}{\liminf} \,  \frac{1}{r^2} \int_{B_{x_0}(r)} |\omega|^2 dv =0$
vanishes identically. In particular,  $H^m(L^2(M))= \{0\}$.
\end{theorem}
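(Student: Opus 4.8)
The plan is to specialize the Bochner--Kato inequality $(\ref{laplacian3})$ to the middle degree $p=m=n/2$ and to observe that the traceless Ricci term disappears of its own accord. Indeed, at $p=m=n/2$ one has $|n-2p|=|n-2m|=0$, so the coefficient $\frac{p|n-2p|}{n-2}\sqrt{\frac{n-1}{n}}$ standing in front of $|E|$ vanishes identically; meanwhile the refined Kato constant becomes $K_m=\frac{1}{n-m}=\frac{2}{n}$ and the scalar-curvature coefficient becomes $\frac{p(n-p)}{n(n-1)}=\frac{n}{4(n-1)}$. Hence $(\ref{laplacian3})$ collapses to the clean inequality
\begin{equation*}
|\omega|\,\triangle|\omega| \;\geq\; \frac{2}{n}\,|\nabla|\omega||^2 + \frac{n}{4(n-1)}\,R\,|\omega|^2 ,
\end{equation*}
in which, thanks to $R\geq 0$, both terms on the right-hand side are nonnegative. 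This cancellation is precisely why no integral or pointwise smallness assumption on $E$ is needed here; in fact it is the whole point of the theorem, so I do not expect any genuine analytic obstacle beyond executing the now-familiar Bochner scheme and handling the borderline case $R\equiv 0$ at the end.

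I would then run the same cut-off and integration-by-parts argument used in the proof of Theorem \ref{thm-global-pinching}. Choosing $\eta\in C_0^\infty(M)$, multiplying the displayed inequality by $\eta^2$, integrating over $M$, and integrating the left-hand side by parts exactly as there, one obtains after applying Young's inequality with a parameter $b>0$ to the cross term a Caccioppoli-type estimate
\begin{equation*}
\Big(1+\tfrac{2}{n}-b\Big)\int_M \eta^2|\nabla|\omega||^2\,dv + \frac{n}{4(n-1)}\int_M R\,\eta^2|\omega|^2\,dv \;\leq\; \frac{1}{b}\int_M |\omega|^2|\nabla\eta|^2\,dv .
\end{equation*}
Fixing any $b\in(0,\,1+\tfrac{2}{n})$ makes the leading coefficient positive, and since $R\geq 0$ the second term on the left is retained as a nonnegative quantity.

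Finally I would insert the cut-off function $\eta$ from $(\ref{cut-off})$, for which $|\nabla\eta|\leq 1/r$ on $B_{x_0}(2r)\setminus B_{x_0}(r)$ and $\eta\equiv 1$ on $B_{x_0}(r)$, yielding
\begin{equation*}
\Big(1+\tfrac{2}{n}-b\Big)\int_{B_{x_0}(r)}|\nabla|\omega||^2\,dv \;\leq\; \frac{1}{b\,r^2}\int_{B_{x_0}(2r)}|\omega|^2\,dv .
\end{equation*}
Letting $r\to\infty$ along a sequence realizing the $\liminf$ and using $\liminf_{r\to\infty}\frac{1}{r^2}\int_{B_{x_0}(r)}|\omega|^2\,dv=0$ forces $\nabla|\omega|\equiv 0$ and $\int_M R|\omega|^2\,dv=0$; thus $|\omega|$ is constant and $R|\omega|\equiv 0$. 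The one delicate point is to rule out a nonzero constant $|\omega|$: in that case $R\equiv 0$, so the Sobolev inequality $(\ref{R-negative1})$ applies and gives the Euclidean-type volume lower bound $(\ref{v-growth})$, whence $\frac{1}{r^2}\int_{B_{x_0}(r)}|\omega|^2\,dv\geq C|\omega|^2 r^{n-2}\to\infty$, contradicting the hypothesis. Therefore $\omega\equiv 0$. For the concluding assertion, every $\omega\in H^m(L^2(M))$ automatically satisfies the $\liminf$ condition, since $\frac{1}{r^2}\int_{B_{x_0}(r)}|\omega|^2\,dv\leq \frac{1}{r^2}\|\omega\|_{L^2(M)}^2\to 0$, and hence $H^m(L^2(M))=\{0\}$.
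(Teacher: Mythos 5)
Your proposal is correct and follows essentially the same route as the paper: specializing the Bochner--Kato inequality $(\ref{laplacian3})$ to $p=m$ so that the $|E|$-term vanishes, deriving the Caccioppoli estimate with the standard cut-off, and ruling out a nonzero constant $|\omega|$ via $R\equiv 0$, the Sobolev inequality $(\ref{R-negative1})$, and the volume lower bound $(\ref{v-growth})$. The only cosmetic difference is your choice of Young's-inequality parameter, which changes the constants but not the argument.
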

\begin{proof}
Taking $p=m=\frac{n}{2}$ in (\ref{laplacian3}), we have
\begin{equation}
 |\omega|\triangle |\omega| \geq \frac{1}{m}|\nabla|\omega||^2 +\frac{m}{2(2m-1)}R|\omega|^2. \label{middle-laplacian}
\end{equation}
Let $\eta$ be the cut-off function defined by (\ref{cut-off}). Multiplying (\ref{middle-laplacian}) by $\eta^2$  and integrating by parts  over $M$, we obtain
\begin{align*}
&\frac{m+1}{m} \int_M  |\nabla|\omega||^2 \eta^2 dv  + \frac{m}{2(2m-1)}\int_M R |\omega|^2 \eta^2 dv\\
&\leq    \frac{1}{2}\int_M \triangle |\omega|^2 \eta^2 dv \\
&= -2 \int_M \langle |\omega|\nabla \eta,  \eta \nabla |\omega| \rangle dv\\
&\leq  m \int_M |\omega|^2 |\nabla \eta|^2dv + \frac{1}{m} \int_M  |\nabla|\omega||^2 \eta^2 dv,
\end{align*}
which implies that
\begin{align*}
\int_{B_{x_0}(r)}  |\nabla|\omega||^2  dv  + \frac{m}{2(2m-1)}\int_{B_{x_0}(r)} R |\omega|^2 dv
 \leq &   m \int_M |\omega|^2 |\nabla \eta|^2dv\\
 \leq &  \frac{m}{r^2} \int_{B_{x_0}(2r)} |\omega|^2 dv.
\end{align*}
Having established this fact, the rest of the proof is completely analogous to that  of Theorem \ref{thm-global-pinching}.
\end{proof}
\begin{remark}
Pigola, Rigoli and Setti $\cite{PRS2}$ proved a vanishing theorem for bounded harmonic $m$-form on a $2m$-dimensional complete LCF manifold by putting some assumptions on scalar curvature and volume growth.
\end{remark}

Combining Theorems \ref{thm-global-pinching} and \ref{thm-gap-m}, we immediately have
\begin{corollary}
 Let $(M^n, g)$, $n\geq 3$, be a complete non-compact, simply connected, locally conformally flat Riemannian manifold with $R \geq 0$. Then there exists a positive constant $C$ such that if
 \begin{equation*}
\int_M |E|^{\frac{n}{2}}dv < C,
\end{equation*}
Then every harmonic $p$-form  $\omega$ on $M$ with
$\underset{r\rightarrow \infty}{\liminf}\,   \frac{1}{r^2} \int_{B_{x_0}(r)} |\omega|^2 dv =0$
vanishes identically for  every $1 \leq p \leq n-1$. In particular,  $H^p(L^2(M))= \{0\}$ for  every $1 \leq p \leq n-1$.
\end{corollary}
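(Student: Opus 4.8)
The plan is to assemble the corollary from the two preceding vanishing theorems, which between them cover every degree $p$ with $1\le p\le n-1$ once the pinching threshold is made uniform in $p$. First I would fix the constant
\begin{equation*}
C=\Big(\min_{\substack{1\le p\le n-1\\ p\neq n/2}} C(p)\Big)^{n/2},
\end{equation*}
where $C(p)=\frac{(n-2)\sqrt{n}}{p|n-2p|\sqrt{n-1}}\min\{1+K_p,\frac{4p(n-p)}{n(n-2)}\}\,Q(\mathbb{S}^n)$ is the degree-dependent bound appearing in Theorem \ref{thm-global-pinching}. Since only finitely many integers $p$ are admissible, the minimum is attained and $C>0$.

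Next I would split the range of $p$ into the two regimes handled by the two theorems. For each $p$ with $1\le p\le n-1$ and $p\neq n/2$, the hypothesis $\int_M|E|^{\frac n2}dv<C$ gives $(\int_M|E|^{\frac n2}dv)^{\frac2n}<C^{\frac2n}\le C(p)$, so the pinching condition (\ref{gap}) of Theorem \ref{thm-global-pinching} holds; that theorem then forces any harmonic $p$-form $\omega$ with $\underset{r\to\infty}{\liminf}\,\frac1{r^2}\int_{B_{x_0}(r)}|\omega|^2dv=0$ to vanish. When $n=2m$ is even, the one remaining middle degree $p=m=n/2$ is instead covered by Theorem \ref{thm-gap-m}, which assumes nothing about $E$ (only $R\ge0$ and $n=2m>3$, both in force) and yields $\omega=0$ under the same growth condition. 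If $n$ is odd there is no middle degree, so Theorem \ref{thm-global-pinching} alone disposes of all $1\le p\le n-1$.

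Finally, for the $L^2$ statement I would observe that any $\omega\in H^p(L^2(M))$ satisfies $\int_M|\omega|^2dv<\infty$, hence $\frac1{r^2}\int_{B_{x_0}(r)}|\omega|^2dv\le\frac1{r^2}\int_M|\omega|^2dv\to0$ as $r\to\infty$; thus the $\liminf$ growth hypothesis is automatically met, and the previous paragraph gives $\omega=0$, so $H^p(L^2(M))=\{0\}$ for every $1\le p\le n-1$.

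There is essentially no serious obstacle: the corollary is a repackaging of Theorems \ref{thm-global-pinching} and \ref{thm-gap-m}, and the only points deserving care are taking the uniform constant $C$ as the minimum of the thresholds $C(p)$ over the finite admissible index set, and noting that the single degree excluded from Theorem \ref{thm-global-pinching} is recovered from Theorem \ref{thm-gap-m} with no pinching assumption at all.
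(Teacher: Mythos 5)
Your proposal is correct and matches the paper's approach: the paper states this corollary as an immediate consequence of Theorems \ref{thm-global-pinching} and \ref{thm-gap-m}, exactly the combination you carry out, with the uniform constant obtained by minimizing the finitely many thresholds $C(p)$ over the admissible degrees and the middle degree (when $n$ is even) handled by Theorem \ref{thm-gap-m} without any pinching on $E$.
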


\begin{theorem}
\label{thm-m-R}
 Let $(M^n, g)$ be a complete non-compact, simply connected, locally conformally flat Riemannian manifold of dimension $n=2m>3$. Then there exists  $C>0$ such that if
 \begin{equation*}
\int_M |R|^{m}dv< C,
\end{equation*}
 then every harmonic $m$-form  $\omega$ on $M$ with
$\underset{r\rightarrow \infty}{\liminf}\,  \frac{1}{r^2} \int_{B_{x_0}(r)} |\omega|^2 dv =0$
vanishes identically. In particular,  $H^m(L^2(M))= \{0\}$.
\end{theorem}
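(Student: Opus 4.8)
The plan is to specialize the refined Bochner inequality (\ref{laplacian3}) to the middle degree $p = m = \frac{n}{2}$ and then run a Caccioppoli-type argument modelled on the proof of Theorem \ref{thm-Ric}, with the scalar curvature $R$ playing the role the Ricci tensor plays there. The decisive simplification is that for $p = m$ the factor $n - 2p$ vanishes, so the traceless-Ricci term in (\ref{laplacian3}) disappears and one is left with (\ref{middle-laplacian}),
\begin{equation*}
|\omega|\triangle|\omega| \geq \frac{1}{m}|\nabla|\omega||^2 + \frac{m}{2(2m-1)}R|\omega|^2.
\end{equation*}
Unlike Theorem \ref{thm-gap-m}, here I do not assume $R \geq 0$, so I would simply bound $R \geq -|R|$ to get
\begin{equation*}
|\omega|\triangle|\omega| + \frac{m}{2(2m-1)}|R||\omega|^2 \geq \frac{1}{m}|\nabla|\omega||^2.
\end{equation*}
Since $m = \frac{n}{2}$, the hypothesis $\int_M |R|^m dv < C$ is exactly a smallness bound on $\int_M |R|^{n/2}dv$; in particular this integral is finite, so by Carron's theorem the $L^2$-Sobolev inequality (\ref{finite-scalar}) holds for some $C_s > 0$, and with it the volume lower bound (\ref{v-growth}). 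Thus the argument trades the sign condition on $R$ in Theorem \ref{thm-gap-m} for the smallness condition that drives Theorem \ref{thm-Ric}.

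Next I would take the cut-off function $\eta$ from (\ref{cut-off}), multiply the displayed inequality by $\eta^2$, and integrate by parts over $M$ exactly as in the derivation leading to (\ref{thm40}). The curvature term is then controlled by H\"older's inequality followed by (\ref{finite-scalar}),
\begin{equation*}
\int_M |R|\eta^2|\omega|^2 dv \leq \frac{1}{C_s}\Big(\int_M |R|^{n/2}dv\Big)^{2/n}\int_M |\nabla(\eta|\omega|)|^2 dv,
\end{equation*}
after which I would expand $|\nabla(\eta|\omega|)|^2 = \eta^2|\nabla|\omega||^2 + |\omega|^2|\nabla\eta|^2 + 2\eta|\omega|\langle\nabla\eta,\nabla|\omega|\rangle$ and apply Cauchy-Schwarz to the cross terms, just as in Theorem \ref{thm-Ric}. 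Choosing the constant $C$ in the hypothesis small enough that $\frac{m}{2(2m-1)}C_s^{-1}C^{2/n}$ is strictly dominated by the gradient coefficient $1 + \frac{1}{m}$ produced by the integration by parts, I would absorb all $\eta^2|\nabla|\omega||^2$ contributions to the left and arrive at a Caccioppoli inequality
\begin{equation*}
B\int_{B_{x_0}(r)}|\nabla|\omega||^2 dv \leq \frac{C'}{r^2}\int_{B_{x_0}(2r)}|\omega|^2 dv
\end{equation*}
for some $B > 0$ and $C' > 0$.

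Finally, letting $r \to \infty$ and invoking $\liminf_{r\to\infty}\frac{1}{r^2}\int_{B_{x_0}(r)}|\omega|^2 dv = 0$ forces $\nabla|\omega| \equiv 0$, so $|\omega|$ equals a constant $c$. The volume lower bound (\ref{v-growth}) then gives $\frac{1}{r^2}\int_{B_{x_0}(r)}|\omega|^2 dv = c^2\,\frac{\mbox{vol}(B_{x_0}(r))}{r^2} \geq c^2 C r^{n-2} \to +\infty$ unless $c = 0$; comparison with the $\liminf$ hypothesis yields $\omega = 0$, whence $H^m(L^2(M)) = \{0\}$. The only genuinely delicate point is the bookkeeping that pins down an admissible $C$ ensuring $B > 0$, and since the Sobolev constant $C_s$ in (\ref{finite-scalar}) is not explicit, the resulting $C$ will likewise be implicit, precisely as in the remark following Theorem \ref{thm-Ric}. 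Everything else is a direct transcription of the $p = m$ case of the machinery already in place.
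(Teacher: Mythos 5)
Your proposal is correct and matches the paper's own (very terse) proof: the paper likewise derives $|\omega|\triangle|\omega| + \frac{m}{2(2m-1)}|R||\omega|^2 \geq \frac{1}{m}|\nabla|\omega||^2$ from (\ref{middle-laplacian}) and then invokes the same Caccioppoli/Sobolev absorption machinery, with the smallness of $\int_M |R|^{n/2}dv$ both supplying Carron's Sobolev inequality (\ref{finite-scalar}) and making the curvature term absorbable, exactly as you spell out. Your write-up is in fact more explicit than the paper's, which simply says the rest is analogous to the earlier theorems.
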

\begin{proof}
 It follows from (\ref{middle-laplacian}) that
\begin{equation*}
 |\omega|\triangle |\omega| + \frac{m}{2(2m-1)}|R| |\omega|^2\geq \frac{1}{m}|\nabla|\omega||^2.
\end{equation*}
By an analogue argument of Theorem \ref{thm-global-pinching}, we immediately complete the proof.
\end{proof}

Let us recall that a Riemannian manifold $M$ is said to have nonnegative isotropic curvature if
 \begin{equation*}
R_{1313} + R_{1414} + R_{2323} + R_{2424} -2R_{1234}\geq 0
\end{equation*}
for every orthonormal $4$-frame $\{e_1, e_2, e_3, e_4\}$. From \cite{MN}, we know that if $M$ is conformally flat and has nonnegative isotropic curvature, then $F(\omega) \geq 0$ for any $2 \leq p \leq [\frac{n}{2}]$. Thus, it follows from the relations (\ref{laplacian1}) and (\ref{Koto}) that
 \begin{equation*}
 |\omega| \triangle |\omega|  \geq \frac{1}{n-p}|\nabla|\omega||^2.
 \end{equation*}
 Therefore, using the previous argument and the duality generated by the star operator $\ast$, we have the following result.
\begin{theorem}
\label{iso1}
Let $(M^n, g)$,  $n\geq 4$, be a complete locally conformally flat Riemannian manifold with
  nonnegative isotropic curvature.
 Then for every $2 \leq p \leq n-2\, ,$ $(i)$ if $\underset{r\rightarrow \infty}{\liminf} \, \frac{\operatorname{vol} (B_{x_0}(r))}{r^2} > 0\, ,$ then every harmonic $p$-form  $\omega$ on $M$ with
$\underset{r\rightarrow \infty}{\liminf} \, \frac{1}{r^2} \int_{B_{x_0}(r)} |\omega|^2 dv =0$
vanishes identically. $(ii)$ if $M$ has infinite volume then  $H^p(L^2(M))= \{0\}\, .$ \end{theorem}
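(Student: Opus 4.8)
The plan is to establish the pointwise Bochner-type inequality first, then run the cutoff-function integration argument that by now has become routine in this paper. Under the hypothesis that $M$ is conformally flat with nonnegative isotropic curvature, the result of Micallef-Moore quoted just above the statement gives $F(\omega)\geq 0$ for any harmonic $p$-form with $2\leq p\leq [\frac n2]$. Combining this with the Weitzenb\"ock formula $(\ref{laplacian1})$, which reads $\frac12\triangle|\omega|^2=|\nabla\omega|^2+pF(\omega)$, and dropping the nonnegative curvature term, I get $\frac12\triangle|\omega|^2\geq|\nabla\omega|^2$. Feeding in the refined Kato inequality $(\ref{Koto})$ together with the identity $\frac12\triangle|\omega|^2=|\omega|\triangle|\omega|+|\nabla|\omega||^2$ yields the stated pointwise bound $|\omega|\triangle|\omega|\geq\frac{1}{n-p}|\nabla|\omega||^2$ for $2\leq p\leq[\frac n2]$.

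Next, to treat the full range $2\leq p\leq n-2$, I would invoke Hodge duality: if $\omega$ is a harmonic $p$-form then $\ast\omega$ is a harmonic $(n-p)$-form with $|\ast\omega|=|\omega|$, so the inequality proved for degrees $\leq[\frac n2]$ transfers to degrees $\geq[\frac n2]$, giving $|\omega|\triangle|\omega|\geq K_p|\nabla|\omega||^2$ with the appropriate Kato constant $K_p$ throughout $2\leq p\leq n-2$. With the pointwise differential inequality in hand, I would multiply by $\eta^2$ for the standard logarithmic cutoff $\eta$ satisfying $(\ref{cut-off})$, integrate by parts over $M$, and apply Cauchy-Schwarz exactly as in the proof of Theorem \ref{thm-global-pinching}. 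This produces a Caccioppoli-type estimate of the shape
\begin{equation*}
(K_p+1-b)\int_M\eta^2|\nabla|\omega||^2\,dv\leq \frac1b\int_M|\omega|^2|\nabla\eta|^2\,dv,
\end{equation*}
valid for small $b>0$, and hence
\begin{equation*}
\int_{B_{x_0}(r)}|\nabla|\omega||^2\,dv\leq \frac{C}{r^2}\int_{B_{x_0}(2r)}|\omega|^2\,dv.
\end{equation*}

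For part $(i)$, letting $r\to\infty$ and using $\underset{r\rightarrow\infty}{\liminf}\,\frac{1}{r^2}\int_{B_{x_0}(r)}|\omega|^2\,dv=0$ forces $\nabla|\omega|=0$, so $|\omega|$ is constant; the hypothesis $\underset{r\rightarrow\infty}{\liminf}\,\frac{\operatorname{vol}(B_{x_0}(r))}{r^2}>0$ then rules out a nonzero constant (a positive constant times a volume growing faster than $r^2$ would violate the liminf condition), giving $\omega=0$. For part $(ii)$, an $L^2$ harmonic $p$-form automatically has $\frac{1}{r^2}\int_{B_{x_0}(r)}|\omega|^2\,dv\to 0$; since $|\omega|$ is then a constant that is $L^2$ over a manifold of infinite volume, the constant must vanish, so $H^p(L^2(M))=\{0\}$. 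The main point requiring care — rather than a genuine obstacle — is verifying that Hodge duality correctly carries the Micallef-Moore positivity and the Kato constant across the middle degree so that the estimate is uniform on the whole range $2\leq p\leq n-2$; the integration-by-parts machinery itself is entirely parallel to the earlier theorems and needs no new ideas.
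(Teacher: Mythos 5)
Your proposal is correct and follows essentially the same route as the paper: the positivity $F(\omega)\geq 0$ from the conformally flat, nonnegative isotropic curvature hypothesis (due to Mercuri--Noronha \cite{MN}, not Micallef--Moore), combined with the Weitzenb\"ock formula and the refined Kato inequality, gives $|\omega|\triangle|\omega|\geq \frac{1}{n-p}|\nabla|\omega||^2$ for $2\leq p\leq [\frac n2]$, and the paper likewise invokes Hodge duality plus the cutoff integration argument of the earlier theorems to conclude. The only discrepancy is the misattributed reference; the mathematics matches the paper's (much terser) argument.
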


 For a LCF Riemannian manifold, a direct computation from (\ref{LCF-decomposition}) gives
\begin{equation*}
 R_{ijkl}=0
 \end{equation*}
 and
 \begin{equation*}
 R_{ijij}=\frac{1}{n-2}\left(\mbox{Ric}_{ii}+ \mbox{Ric}_{jj}- \frac{R}{n-1}\right)
 \end{equation*}
for all distinct $i,j, k, l$. If
\begin{equation*}
\mbox{Ric} \geq  \frac{R}{2(n-1),}
 \end{equation*}
 then $R_{ijij}\geq 0$,  and $M$ has nonnegative isotropic curvature. Applying Theorem \ref{iso1}, we have the following corollary.
\begin{corollary}
 Let $(M^n, g)$, $n\geq 4$, be a complete  non-compact  locally conformally flat Riemannian manifold. Assume that
 \begin{equation*}
\mbox{Ric}(x) \geq  \frac{1}{2(n-1)}R(x)
 \end{equation*}
 for all $x \in M$. Then   $H^p(L^2(M))= \{0\}$ for  all $2 \leq p \leq n-2$.
\end{corollary}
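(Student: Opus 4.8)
The plan is to verify that the pointwise bound $\mbox{Ric} \geq \frac{1}{2(n-1)}R\,g$ forces $M$ to have nonnegative isotropic curvature, and then to invoke Theorem \ref{iso1}. The one genuinely new ingredient is that the corollary does not explicitly assume $M$ has infinite volume, which part $(ii)$ of Theorem \ref{iso1} requires; I would extract this from the curvature hypothesis itself.

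First I would fix $x \in M$ and an arbitrary orthonormal $4$-frame $\{e_1, e_2, e_3, e_4\}$ at $x$ and evaluate the isotropic curvature sum. From the displayed LCF identities preceding the corollary, $R_{1234} = 0$ because the four indices are distinct, while for distinct $i,j$ one has
\[
R_{ijij} = \frac{1}{n-2}\Big(\mbox{Ric}_{ii} + \mbox{Ric}_{jj} - \frac{R}{n-1}\Big).
\]
The pointwise hypothesis gives $\mbox{Ric}_{ii} = \mbox{Ric}(e_i, e_i) \geq \frac{R}{2(n-1)}$ for every $i$, so each term obeys $R_{ijij} \geq \frac{1}{n-2}\big(\frac{R}{n-1} - \frac{R}{n-1}\big) = 0$. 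Hence $R_{1313} + R_{1414} + R_{2323} + R_{2424} - 2R_{1234} \geq 0$ for every orthonormal $4$-frame, i.e. $M$ has nonnegative isotropic curvature. No diagonalization of $\mbox{Ric}$ is needed here, since the off-diagonal Ricci components are killed by the Kronecker deltas in the component formula.

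Next I would establish that $M$ has infinite volume, so that part $(ii)$ of Theorem \ref{iso1} applies. Tracing the hypothesis over an orthonormal basis yields $R = \sum_i \mbox{Ric}_{ii} \geq \frac{n}{2(n-1)}R$, whence $\frac{n-2}{2(n-1)}R \geq 0$ and therefore $R \geq 0$ since $n \geq 4$; substituting back gives $\mbox{Ric}_{ii} \geq \frac{R}{2(n-1)} \geq 0$, so $\mbox{Ric} \geq 0$ on all of $M$. As $M$ is complete and non-compact, Yau's theorem then guarantees at least linear volume growth, and in particular $\mbox{vol}(M) = \infty$.

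Finally, with $M$ complete, LCF, of nonnegative isotropic curvature, and of infinite volume, Theorem \ref{iso1}$(ii)$ applies and delivers $H^p(L^2(M)) = \{0\}$ for all $2 \leq p \leq n-2$. The main obstacle is precisely this infinite-volume step: one must recognize that the seemingly one-sided Ricci bound secretly forces $\mbox{Ric} \geq 0$ via its trace, which is what unlocks the unconditional part $(ii)$ rather than the volume-growth-conditional part $(i)$ of Theorem \ref{iso1}. The isotropic curvature computation itself is routine once the LCF component identities are in hand.
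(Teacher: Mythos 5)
Your proposal is correct and follows essentially the same route as the paper: verify nonnegative isotropic curvature from the LCF component identities, deduce $\mbox{Ric}\geq 0$ (hence infinite volume by Yau's theorem for complete non-compact manifolds), and conclude via Theorem \ref{iso1}$(ii)$. Your explicit trace argument showing $R\geq 0$ and hence $\mbox{Ric}\geq 0$ merely spells out a step the paper leaves implicit in the phrase ``$M$ is of nonnegative Ricci curvature.''
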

\begin{proof}
According to the  previous discussion,  $M$ is of  nonnegative Ricci curvature. Since $M$ is complete non-compact, we conclude from \cite{Ya} that $M$ has infinite volume. Hence the conclusion follows immediately from Theorem \ref{iso1} $(ii)$.
\end{proof}

For the four dimensional case, recall that an oriented Riemannian manifold of dimension $4$  is said to be half-conformally flat if  either the self-dual Weyl tensor $W^+=0$ or the anti-self-dual Weyl tensor $W^-=0$. Without loss of generality, we assume that $W^+=0$.

By the property of $W^-$, for any $k, l=1,2, 3,4$, we have
\begin{equation*}
W^-_{12kl}= -W^-_{34kl},\ \ \ W^-_{13kl}= -W^-_{42kl},\ \ \ W^-_{14kl}= -W^-_{23kl}.
\end{equation*}
Combining with the first Bianchi identity, we compute
\begin{align*}
&W^-_{1313} + W^-_{1414} +W^-_{2323}+W^-_{2424}- 2W^-_{1234}\\
&= -W^-_{4213}-W^-_{2314} -W^-_{1423} -W^-_{3124} -2 W^-_{1234}\\
&= -2W^-_{1342}-2W^-_{1423}-2 W^-_{1234}\\
&=0.
\end{align*}
Hence, the assumption $W^+=0$ and the relation (\ref{Weyl}) imply that
\begin{align*}
&R_{1313} + R_{1414} + R_{2323} + R_{2424} -2R_{1234}= \frac{1}{3}R.
\end{align*}
Therefore, from the proof of Theorem 2.1 in \cite{MW}, an analogous argument as Theorem \ref{iso1} yields
\begin{theorem}
 Let $(M^4, g)$ be a complete, half-conformally flat Riemannian manifold with $R\geq 0$ and with infinite volume. Then $H^2(L^2(M))= \{0\}$.
\end{theorem}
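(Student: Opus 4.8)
The plan is to show that a half-conformally flat $4$-manifold with $W^+=0$, $R\ge 0$, and infinite volume admits no nontrivial $L^2$ harmonic $2$-forms, by deriving a Bochner-type estimate for the squared norm of a harmonic $2$-form that is structurally identical to the one already used in Theorem~\ref{thm-gap-m}. The crucial algebraic input is the computation just carried out: under the assumption $W^+=0$, the isotropic-curvature combination equals $\tfrac13 R$, so that the curvature term $F(\omega)$ appearing in (\ref{laplacian1}) is controlled from below purely by the scalar curvature. Concretely, I would first invoke the argument in the proof of Theorem~2.1 of \cite{MW} to convert this identity into a pointwise lower bound of the form $F(\omega)\ge c\,R\,|\omega|^2$ for some positive constant $c$ depending only on the dimension, valid for every harmonic $2$-form $\omega$ on a half-conformally flat $4$-manifold.

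Next I would feed this into the Weitzenb\"ock formula (\ref{laplacian1}) together with the refined Kato inequality (\ref{Koto}). For $n=4$ and $p=m=2$ we have $K_p=\tfrac1p=\tfrac12$, so combining $\tfrac12\triangle|\omega|^2=|\nabla\omega|^2+2F(\omega)$ with $|\nabla\omega|^2-|\nabla|\omega||^2\ge \tfrac12|\nabla|\omega||^2$ and the sign condition $R\ge 0$ yields a differential inequality of exactly the shape
\begin{equation*}
|\omega|\triangle|\omega|\ge \tfrac12|\nabla|\omega||^2 + c'\,R\,|\omega|^2
\end{equation*}
with $c'\ge 0$, mirroring (\ref{middle-laplacian}) in the $2m$-dimensional case. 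From here the remainder of the proof is verbatim the cutoff argument of Theorem~\ref{thm-gap-m}: multiply by $\eta^2$ for the standard logarithmic cutoff (\ref{cut-off}), integrate by parts, absorb the gradient term using Cauchy--Schwarz, and discard the nonnegative scalar-curvature contribution to obtain
\begin{equation*}
\int_{B_{x_0}(r)}|\nabla|\omega||^2\,dv\le \frac{C}{r^2}\int_{B_{x_0}(2r)}|\omega|^2\,dv.
\end{equation*}
Letting $r\to\infty$ under the hypothesis $\liminf_{r\to\infty} r^{-2}\int_{B_{x_0}(r)}|\omega|^2\,dv=0$ forces $|\omega|$ to be constant.

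Finally I would rule out a nonzero constant using the infinite-volume hypothesis. If $|\omega|\equiv c_0\neq 0$, then $\int_{B_{x_0}(r)}|\omega|^2\,dv=c_0^2\,\mathrm{vol}(B_{x_0}(r))$, and since any nonzero constant $2$-form lies in $L^2$ only if $M$ has finite volume, the $L^2$ condition $H^2(L^2(M))$ already excludes this; alternatively the vanishing of $R|\omega|$ forces $R\equiv 0$ on the support, which combined with the structure of the estimate and infinite volume contradicts the $\liminf$ condition exactly as in the closing lines of Theorem~\ref{thm-global-pinching}. Thus $\omega\equiv 0$, giving $H^2(L^2(M))=\{0\}$.

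The step I expect to be the genuine obstacle is the first one: establishing the clean pointwise bound $F(\omega)\ge cR|\omega|^2$ for $2$-forms. The identity $R_{1313}+R_{1414}+R_{2323}+R_{2424}-2R_{1234}=\tfrac13 R$ is only an inequality along one distinguished $4$-frame, whereas $F(\omega)$ is a full contraction of the curvature operator against $\omega$; turning the single-frame scalar identity into a lower bound on the curvature operator $\mathcal R_p$ acting on an arbitrary decomposable (and hence arbitrary) $2$-form is precisely the content imported from \cite{MW}, and one must check that the self-dual/anti-self-dual splitting of $\Lambda^2$ in dimension four, under $W^+=0$, makes the curvature operator nonnegative up to the scalar term on the relevant summand. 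Once that algebraic positivity is in hand, the analytic machinery is entirely routine and already developed in the preceding theorems.
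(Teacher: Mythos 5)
Your proposal follows the same route as the paper's own (very condensed) proof: derive the frame identity $R_{1313}+R_{1414}+R_{2323}+R_{2424}-2R_{1234}=\frac{1}{3}R$ from $W^+=0$, import the Micallef--Wang analysis of the Weitzenb\"ock curvature term for $2$-forms in dimension four, and then run the cutoff argument of Theorems \ref{thm-gap-m} and \ref{iso1}. The analytic half of your write-up (the Kato constant $K_2=\tfrac12$, the integration by parts against the cutoff $\eta$, and the use of infinite volume to rule out a nonzero constant $|\omega|$) is correct and is exactly what the paper does.

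However, the step you yourself flag as the genuine obstacle --- upgrading the single-frame scalar identity to a pointwise bound $F(\omega)\ge c\,R\,|\omega|^2$ for \emph{every} $2$-form --- is not merely delicate; it fails. Put a $2$-form in normal form $\omega=\lambda_1\,e^1\wedge e^2+\lambda_2\,e^3\wedge e^4$ with respect to a positively oriented orthonormal frame. A direct computation from the definition of $F$ gives
\begin{equation*}
F(\omega)=\frac{(\lambda_1+\lambda_2)^2}{2}\bigl(S-2R_{1234}\bigr)+\frac{(\lambda_1-\lambda_2)^2}{2}\bigl(S+2R_{1234}\bigr),\qquad S:=R_{1313}+R_{1414}+R_{2323}+R_{2424}.
\end{equation*}
The hypothesis $W^+=0$ gives $S-2R_{1234}=\frac{R}{3}$, but the companion combination is $S+2R_{1234}=\frac{R}{3}+4W^-_{1234}$, and $W^-$ is completely unconstrained by the hypotheses. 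Since $\lambda_1+\lambda_2$ and $\lambda_1-\lambda_2$ measure the self-dual and anti-self-dual components of $\omega$, the assumptions $W^+=0$ and $R\ge 0$ control the Weitzenb\"ock term only on $\Lambda^+$; your claimed inequality $F(\omega)\ge cR|\omega|^2$ is false for anti-self-dual forms, and the argument actually proves only that $L^2$ \emph{self-dual} harmonic $2$-forms vanish. The same gap is present in the paper's one-line appeal to ``an analogous argument,'' and it cannot be repaired without a hypothesis on $W^-$: the Eguchi--Hanson metric is complete, scalar-flat K\"ahler (hence $W^+=0$ and $R=0$), has infinite volume, and carries a nontrivial anti-self-dual $L^2$ harmonic $2$-form.
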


 If $M^n$ is a locally conformal flat manifold
with $R \leq  0$, then $M$ supports a weighted Poincar\'{e} inequality
 \begin{equation}
 \int_M \Big(|\nabla \phi|^2 -\frac{n-2}{4(n-1)}|R|\phi^2 \Big)dv \geq 0, \ \ \forall \phi \in C^{\infty}_0(M), \label{weighted-P}
\end{equation}
 which is equivalent to the nonnegative eigenvalue of the Schr\"{o}dinger operator $\triangle + \frac{n-2}{4(n-1)}|R|$. Thus under a lower bound condition of Ricci curvature, we can deduce the following vanishing theorem.
\begin{theorem}
\label{thm-pinching}
Let $(M^n, g)$, $n\geq 4$, be a complete, simply connected, locally conformally flat manifold with $R \leq 0$.  Suppose  the Ricci curvature of $M$ satisfies the lower bound
\begin{equation}
\mbox{Ric}(x) \geq \frac{(n-2)^2 - 4p(p-1)}{4p(n-1)(n-2p)} R(x) \label{Ric-lower}
\end{equation}
for $1 \leq p < [\frac{n}{2}]$ at every $x \in M$.  Then  every harmonic $p$-form  $\omega$ on $M$ with
$\underset{r\rightarrow \infty}{\liminf} \, \frac{1}{r^2} \int_{B_{x_0}(r)} |\omega|^2 dv =0$
vanishes identically. In particular,  $H^p(L^2(M))= \{0\}$.
\end{theorem}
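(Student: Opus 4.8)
\textbf{Proof plan for Theorem \ref{thm-pinching}.}
The strategy is to start from the refined Bochner inequality $(\ref{laplacian3})$ and to absorb the scalar curvature term using the weighted Poincar\'{e} inequality $(\ref{weighted-P})$ rather than the Sobolev inequality used in the earlier theorems. First I would take an arbitrary harmonic $p$-form $\omega$ and bound the $|E||\omega|^2$ term in $(\ref{laplacian3})$ from below in terms of $R$. Since $R \leq 0$, the pointwise Ricci lower bound $(\ref{Ric-lower})$ must be converted into a pointwise estimate controlling $-\frac{p|n-2p|}{n-2}\sqrt{\frac{n-1}{n}}|E|$ against $-\frac{p(n-p)}{n(n-1)}R = \frac{p(n-p)}{n(n-1)}|R|$, so that the right-hand side of $(\ref{laplacian3})$ becomes
\begin{equation*}
|\omega|\triangle|\omega| \geq K_p|\nabla|\omega||^2 - c\,|R|\,|\omega|^2
\end{equation*}
for an explicit constant $c>0$. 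The key observation is that $(\ref{Ric-lower})$ is exactly the algebraic condition ensuring $c \leq \frac{(n-2)^2}{4(n-1)}\cdot\frac{1}{(n-1)}$ type bound, i.e.\ that $c$ does not exceed the coefficient $\frac{n-2}{4(n-1)}$ appearing in the weighted Poincar\'{e} inequality after accounting for the Kato constant $K_p$.

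Next I would multiply the resulting inequality by $\eta^2$ for the cut-off function $\eta$ from $(\ref{cut-off})$, integrate by parts exactly as in the proof of Theorem \ref{thm-global-pinching}, and use the Cauchy--Schwarz inequality with a parameter $b>0$ to obtain a Caccioppoli-type estimate of the form
\begin{equation*}
(1+K_p-b)\int_M \eta^2|\nabla|\omega||^2\,dv \leq \frac{1}{b}\int_M |\omega|^2|\nabla\eta|^2\,dv + c\int_M |R|\eta^2|\omega|^2\,dv.
\end{equation*}
At this point, instead of invoking Sobolev, I would apply the weighted Poincar\'{e} inequality $(\ref{weighted-P})$ to the test function $\phi = \eta|\omega|$; this gives
\begin{equation*}
\frac{n-2}{4(n-1)}\int_M |R|\eta^2|\omega|^2\,dv \leq \int_M |\nabla(\eta|\omega|)|^2\,dv,
\end{equation*}
and expanding the right-hand side reproduces the gradient and cut-off terms. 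The pinching condition $(\ref{Ric-lower})$ is designed precisely so that $c \leq \frac{n-2}{4(n-1)}(1+K_p)$, which lets the $|R||\omega|^2$ contribution be dominated by the gradient terms with a strictly positive leading coefficient on $\int \eta^2|\nabla|\omega||^2$.

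Choosing $b$ sufficiently small and combining the two inequalities yields
\begin{equation*}
B\int_{B_{x_0}(r)}|\nabla|\omega||^2\,dv \leq \frac{C}{r^2}\int_{B_{x_0}(2r)}|\omega|^2\,dv
\end{equation*}
with $B>0$. Letting $r\to\infty$ and using $\underset{r\rightarrow\infty}{\liminf}\frac{1}{r^2}\int_{B_{x_0}(r)}|\omega|^2\,dv=0$ forces $\nabla|\omega|=0$, so $|\omega|$ is constant; feeding this back into the weighted Poincar\'{e} step forces either $|\omega|=0$ outright or forces the volume growth to violate the $\liminf$ hypothesis, exactly as at the end of Theorem \ref{thm-global-pinching}. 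The main obstacle I anticipate is purely algebraic: verifying that the coefficient $\frac{(n-2)^2-4p(p-1)}{4p(n-1)(n-2p)}$ in $(\ref{Ric-lower})$ is the sharp threshold making $c \leq \frac{n-2}{4(n-1)}(1+K_p)$ hold, since this requires carefully tracking the Kato constant $K_p = \frac{1}{n-p}$ for $p<n/2$ and correctly translating the Ricci bound into a bound on $|E|$ via $|E|^2 = |\mathrm{Ric}|^2 - R^2/n$ together with Lemma \ref{lagrange}.
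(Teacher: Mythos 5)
Your overall architecture---refined Kato inequality, weighted Poincar\'e inequality $(\ref{weighted-P})$ tested against $\phi=\eta|\omega|$, the cut-off $(\ref{cut-off})$, and the final constancy/volume-growth contradiction via $(\ref{R-negative1})$---is exactly that of the paper. The genuine gap is in your first step: you start from the $|E|$-form of the Bochner inequality $(\ref{laplacian3})$ and propose to convert the pointwise bound $(\ref{Ric-lower})$ into a pointwise estimate $|E|\leq\kappa|R|$. This round trip cannot recover the constant you need. A one-sided bound $\mbox{Ric}\geq\delta R\,g$ together with $\operatorname{tr}\mbox{Ric}=R\leq 0$ only traps the eigenvalues of $\mbox{Ric}$ in $[\delta R,\ (1-(n-1)\delta)R]$, and the extremal configuration (one eigenvalue at the top, the rest at the bottom) gives $|E|=|R|\,|\delta-\tfrac1n|\sqrt{n(n-1)}$; for $p=1$ and $\delta=\frac{n-2}{4(n-1)}$ this makes $\kappa\sim n/4$, so the constant $c$ you feed into the Caccioppoli step is of order $n$, while the admissible threshold $\frac{n-2}{4(n-1)}(1+K_p)$ stays bounded near $\tfrac14$. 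Moreover $(\ref{laplacian3})$ was itself obtained from $(\ref{F})$ by the one-directional estimate of Lemma \ref{lagrange}, so passing from the Ricci bound to $|E|$ and back is strictly lossier than using the Ricci bound where it naturally enters. The hypothesis $(\ref{Ric-lower})$ is simply not calibrated for $(\ref{laplacian3})$.

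The paper instead substitutes $(\ref{Ric-lower})$ directly into $(\ref{F0})$: for $1\leq p<[\tfrac n2]$ the coefficient $\frac{n-2p}{n-2}$ of the quadratic term $R_{ij}\alpha^{ii_2\cdots i_p}\alpha^j_{i_2\cdots i_p}$ is positive, so the Ricci lower bound passes straight to that term, and the algebra collapses exactly, since
\begin{equation*}
\frac{p(n-2p)}{n-2}\cdot\frac{(n-2)^2-4p(p-1)}{4p(n-1)(n-2p)}+\frac{p(p-1)}{(n-1)(n-2)}=\frac{n-2}{4(n-1)},
\end{equation*}
yielding $|\omega|\triangle|\omega|\geq\frac{1}{n-p}|\nabla|\omega||^2+\frac{n-2}{4(n-1)}R|\omega|^2$, i.e.\ precisely the weight appearing in $(\ref{weighted-P})$. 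With this inequality no absorption parameters $b,\gamma$ are needed: plugging $\phi=\eta|\omega|$ into $(\ref{weighted-P})$ and integrating by parts, the two scalar-curvature terms cancel identically and the cross terms combine into $\int_M|\omega|^2|\nabla\eta|^2\,dv$, giving $0\leq-\frac{1}{n-p}\int_{B_{x_0}(r)}|\nabla|\omega||^2\,dv+\frac{1}{r^2}\int_{B_{x_0}(2r)}|\omega|^2\,dv$ directly. Your concluding step (constancy of $|\omega|$, then $\mbox{vol}(B_{x_0}(r))\geq Cr^n$ forcing $\omega=0$) is correct and is what the paper does; so the fix is to replace $(\ref{laplacian3})$ by $(\ref{F0})$ at the outset, after which the rest of your plan goes through.
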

\begin{proof}
Let $\omega $ be a harmonic $p$-form on $M$  with $\underset{r\rightarrow \infty}{\liminf} \, \frac{1}{r^2} \int_{B_{x_0}(r)} |\omega|^2 dv =0$. Substituting (\ref{Ric-lower}) into (\ref{F0}), and using (\ref{Koto}), we have
\begin{align}
\nonumber  |\omega| \triangle |\omega| \geq& \frac{1}{n-p}|\nabla |\omega||^2 +\frac{p(n-2p)}{n-2}\frac{(n-2)^2 - 4p(p-1)}{4p(n-1)(n-2p)} R |\omega|^2\\
\nonumber &+\frac{p(p-1)}{(n-1)(n-2)}R|\omega|^2\\
\geq& \frac{1}{n-p}|\nabla|\omega||^2 + \frac{n-2}{4(n-1)}R|\omega|^2. \label{4.24}
\end{align}
Let $\eta$ be the cut-off function defined by (\ref{cut-off}).
Choosing $\phi= \eta |\omega|$ in (\ref{weighted-P}), using (\ref{4.24}) and integrating by parts, we compute
\begin{align}
\nonumber0 \leq& \int_M \Big(|\nabla (\eta|\omega|)|^2 -\frac{n-2}{4(n-1)}|R| \eta^2 |\omega|^2 \Big)dv\\
\nonumber =& \int_M \Big(- \eta |\omega|\triangle (\eta |\omega|) -\frac{n-2}{4(n-1)}|R| \eta^2 |\omega|^2\Big)dv\\
\nonumber =& -\int_M \eta |\omega|(|\omega|\triangle \eta + \eta \triangle |\omega| +2 \langle \nabla \eta, \nabla |\omega|\rangle )dv -\frac{n-2}{4(n-1)}\int_M|R| \eta^2 |\omega|^2dv\\
\nonumber =& -\int_M \Big[\eta^2 |\omega| \triangle |\omega| +\frac{n-2}{4(n-1)}|R||\omega|^2 \Big]dv-2 \int_M \eta |\omega| \langle \nabla \eta, \nabla |\omega|\rangle dv\\
\nonumber & - \int_M |\omega|^2\eta \triangle \eta dv \\
\nonumber \leq& -\frac{1}{n-p}\int_M  \eta^2 |\nabla |\omega||^2dv +\int_M |\omega|^2|\nabla \eta|^2 dv \\
\leq& -\frac{1}{n-p}\int_{B_{x_0}(r)}   |\nabla |\omega||^2dv
+\frac{1}{r^2}\int_{B_{x_0}(2r)} |\omega|^2 dv.
\end{align}
Letting $r \rightarrow \infty$ and  using  $\underset{r\rightarrow \infty}{\liminf} \, \frac{1}{r^2} \int_{B_{x_0}(r)} |\omega|^2 dv =0$, we infer
\begin{equation*}
\nabla |\omega|=0.
\end{equation*}
Hence $\omega$ is constant. Since
$\frac{\mbox{vol}(B_{x_0}(r))}{r^2} \geq  C r^{n-2} \rightarrow \infty$ as $r \rightarrow \infty$ by the assumption  $R \leq 0$, we conclude that $\omega=0$.
\end{proof}

\section{Liouville theorems of $p$-harmonic functions on LCF manifolds
 with negative scalar curvature}

We recall a real-valued $C^{3}$ function $u$ on a Riemannian  $M$  is said to be \textit{strongly
}$p$-\textit{harmonic} if $u$ is a (strong) solution of the $p$-Laplace
equation
\begin{equation}
\begin{array}{l}
\Delta _{p}u:=\text{\textrm{div}}\left( |\nabla u|^{p-2}\nabla u\right) =0
\end{array}
\label{0.1}
\end{equation}%
for $p>1$.
A function $u\in W_{loc}^{1,p}\left( M\right) $ is said to be \textit{weakly
}$p$-\textit{harmonic} if
\begin{equation*}
\int_{M}\left\vert \nabla u\right\vert ^{p-2}\left\langle \nabla u,\nabla
\phi \right\rangle dv=0, \ \ \forall \phi \in C_{0}^{\infty }\left( M\right).
\end{equation*}%
It is well known that the $p$-Laplace equation (\ref{0.1}) arises as the Euler-Lagrange equation
of the $p$-energy  functional  $E_{p}(u) = \int_{M} |\nabla
u|^{p}\, dv\, . $

We say that $M$ supports a weighted Poincar\'{e}
inequality $\left( P_{\rho }\right) $, if there exists a positive function $%
\rho (x)$ a.e. on $M$ such that
\begin{equation*}
\quad  \left( P_{\rho }\right) \quad \quad \quad \int_{M}\rho\left( x\right) f^{2}\left( x\right) dv  \leq
\int_{M}\left\vert \nabla f \left( x\right) \right\vert ^{2}dv, \ \ \forall f \in W_{0}^{1.2}\left(
M\right).
\label{WP}
\end{equation*}

In \cite{CCW},  Chang,  Chen and  Wei introduce and study an approximate solution of the $p$-Laplace equation,
and a linearlization $\mathcal{L}_{\epsilon }$ of a perturbed $p$-Laplace
operator. They prove a Liouville type theorem for weakly $p$%
-harmonic functions with finite $p$-energy on a complete noncompact manifold
$M$ which supports a weighted Poincar\'{e} inequality $\left( P_{\rho }\right) $ and satisfies a
curvature assumption. This nonexistence result, when combined with an
existence theorem, implies that such an
$M$ has at most one $p$-hyperbolic end.  More precisely, the following is proved:
\medskip

\noindent
{\bf Theorem A} \cite{CCW}
\label{T1}{\it Let $M$ be a complete non-compact Riemannian $n$-manifold, $n\geq
2, $ supporting a weighted Poincar\'{e}
inequality $\left( P_{\rho }\right)$ with Ricci curvature
\begin{equation}
\begin{array}{lll}
\mbox{Ric}(x)  \geq  -\tau \rho (x)%
\end{array}
\label{Rs}
\end{equation}%
for all $x\in M,$ where $\tau $ is a constant such that
\begin{equation}
\begin{array}{l}
\tau <\frac{p-1+\kappa}{p^{2}},\text{ }%
\end{array}
\label{1.7}
\end{equation}%
in which $p>1\, ,$ and

\begin{equation}\kappa = \begin{cases}
 \max \{ \frac {1}{n-1}, \min \{ \frac{\left( p-1\right) ^{2}}{n},1\}\}\qquad & \operatorname{if}\quad p>2, \\
 \frac{\left( p-1\right) ^{2}}{n-1} \qquad & \operatorname{if}\quad 1 < p \le 2.
 \end{cases}
\end{equation}
Then every weakly $p$-harmonic function $u$ with finite $p$-energy $E_{p}$
is constant. Moreover, $M$ has at most one $p$-hyperbolic end.}
\bigskip

Moreover, a Liouville type
theorem for strongly $p$-harmonic functions with finite $q$-energy on
Riemannian manifolds is obtained:
\medskip

\noindent
{\bf Theorem B} \cite{CCW}
\label{T2}{\it Let $M$ be a complete non-compact Riemannian $n$-manifold, $n\geq
2, $ satisfying $\left( P_{\rho }\right),$ with Ricci curvature
\begin{equation}
\begin{array}{lll}
\mbox{Ric}(x)  \geq -\tau \rho (x)%
\end{array}
\label{Rs}
\end{equation}%
for all $x\in M,$ where $\tau $ is a constant such that
\begin{equation}
\begin{array}{l}
\tau <\frac{4\left( q-1+\kappa +b\right) }{q^{2}},\text{ }%
\end{array}
\label{1.7}
\end{equation}%
in which
\begin{equation}
\begin{array}{l}
\kappa =\min \{\frac{\left( p-1\right) ^{2}}{n-1},1\}\text{ and}\text{ }%
b=\min \{0,(p-2)(q-p)\},\text{ where }p>1.%
\end{array}%
\end{equation}%
Let $u\in C^{3}\left( M\right) $ be a strongly $p$-harmonic function with
finite $q$-energy $E_{q}\left( u\right) <\infty .$ \newline
$(I)$ Then $u$ is constant under each one of the following conditions:
\newline
$(1)$ $p=2$ and $q>\frac{n-2}{n-1},$ \newline
$(2)$ $p=4,$ $q>\max \left\{ 1,1-\kappa -b\right\} ,$ \newline
$(3)$ $p>2,$ $p\neq 4,$ and either%
\begin{equation*}
\begin{array}{l}
\max \left\{ 1,p-1-\frac{\kappa }{p-1}\right\} <q\leq \min \left\{ 2,p-\frac{%
\left( p-4\right) ^{2}n}{4\left( p-2\right) }\right\}%
\end{array}%
\end{equation*}%
or%
\begin{equation*}
\begin{array}{l}
\max \left\{ 2,1-\kappa -b\right\} <q,%
\end{array}%
\end{equation*}%
\newline
$(II)$ $u$ does not exist for $1<p<2$ and $q>2.$}
\bigskip

We recall in Sect.4, if $M$ is a locally conformal flat manifold
with scalar curvature $R < 0\, ,$ a.e., then $M$ supports a weighted Poincar\'{e} inequality (\ref{weighted-P}) or $\left( P_{\rho }\right) $ in which
$\rho = -\frac {n-2}{4(n-1)}R\, .$  Applying Theorems A and B, we have

\begin{theorem}
Let $(M^n, g)$, $n\geq 3$, be a complete non-compact, simply connected, locally conformal flat Riemannian manifold with scalar curvature $R < 0\, ,$ a.e. and Ricci curvature satisfying
\begin{equation}
\begin{array}{lll}
\mbox{Ric}(x)  \geq  a R(x)%
\end{array}
\label{Rs}
\end{equation}%
for all $x\in M,$ where $a $ is a constant such that
\begin{equation}
\begin{array}{l}
a <\frac {n-2}{4(n-1)} \cdot \frac{p-1+\kappa}{p^{2}},\text{ }%
\end{array}
\label{1.7}
\end{equation}%
in which $p>1\, ,$ and $\kappa$ is as in $(5.4)\, .$
Then every weakly $p$-harmonic function $u$ with finite $p$-energy $E_{p}$
is constant. Moreover, $M$ has at most one $p$-hyperbolic end.\end{theorem}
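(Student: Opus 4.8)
The plan is to derive this theorem as a direct consequence of Theorem A. To invoke Theorem A I must verify its two hypotheses for $(M,g)$: that $M$ supports a weighted Poincar\'e inequality $(P_\rho)$ for a suitable positive weight $\rho$, and that the Ricci curvature obeys a lower bound of the form $\mbox{Ric}\ge-\tau\rho$ with $\tau<\frac{p-1+\kappa}{p^2}$. The whole argument thus reduces to identifying the correct $\rho$ and $\tau$ and checking that the admissibility condition on $\tau$ translates precisely into the stated hypothesis on $a$.

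First I would produce the weighted Poincar\'e inequality. Since $M$ is simply connected and locally conformally flat, the Yamabe-type Sobolev inequality (\ref{Yamabe}) holds with the sphere's Yamabe constant; discarding its nonnegative left-hand side and using $R=-|R|$ (valid because $R<0$ a.e.) yields exactly (\ref{weighted-P}), i.e.\ the inequality $(P_\rho)$ with the positive weight
\[
\rho(x)=-\frac{n-2}{4(n-1)}R(x)>0,
\]
which is the weight recalled just before the statement.

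Next I would match the curvature hypotheses. Solving $\rho=-\frac{n-2}{4(n-1)}R$ for $R$ gives $R=-\frac{4(n-1)}{n-2}\rho$, so the assumption (\ref{Rs}) becomes
\[
\mbox{Ric}(x)\ge aR(x)=-\frac{4(n-1)}{n-2}\,a\,\rho(x)=-\tau\rho(x),\qquad \tau:=\frac{4(n-1)}{n-2}a.
\]
Multiplying the admissibility requirement $\tau<\frac{p-1+\kappa}{p^2}$ of Theorem A by $\frac{n-2}{4(n-1)}$ shows it is identical to the hypothesis (\ref{1.7}) on $a$. With $\kappa$ as in $(5.4)$, all the hypotheses of Theorem A are then in force, and its conclusion---that every weakly $p$-harmonic function of finite $p$-energy is constant and that $M$ carries at most one $p$-hyperbolic end---is precisely the assertion to be proved.

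Because the argument is a clean reduction, there is no genuine analytic obstacle; the only points deserving care are the sign conventions. I would verify that $\rho>0$ exactly because $R<0$, so that $(P_\rho)$ is a bona fide weighted Poincar\'e inequality, and I would note that Theorem A constrains $\tau$ only from above and imposes no sign restriction, so that the value $\tau=\frac{4(n-1)}{n-2}a$---which may itself be negative---is admissible.
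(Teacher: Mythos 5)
Your proposal is correct and follows essentially the same route as the paper: both reduce the theorem to Theorem A by taking $\rho=-\frac{n-2}{4(n-1)}R$ in the weighted Poincar\'e inequality (\ref{weighted-P}) and checking that $\mbox{Ric}\ge aR$ with $a<\frac{n-2}{4(n-1)}\cdot\frac{p-1+\kappa}{p^2}$ is exactly $\mbox{Ric}\ge-\tau\rho$ with $\tau=\frac{4(n-1)}{n-2}a<\frac{p-1+\kappa}{p^2}$. Your extra remarks on deriving $(P_\rho)$ from the Yamabe inequality and on the sign of $\tau$ are consistent with what the paper recalls in Section 4.
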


\begin{proof}
Since $M$ supports a weighted Poincar\'{e} inequality (\ref{weighted-P}) or $\left( P_{\rho }\right) $ in which
$\rho = -\frac {n-2}{4(n-1)}R\, ,$
the inequalities $(5.8)$ and $(5.9)$ are equivalent to the inequalities $(5.2)$ and $(5.3)$  respectively.
Indeed, $\mbox{Ric} \ge - \tau \rho = \frac {n-2}{4(n-1)}\tau R\, = a R\, ,$ $(5.2) \Longleftrightarrow (5.8)\, ,$ in which $a=\frac {n-2}{4(n-1)}\tau\, ,$ and $$(5.3)\quad \tau <\frac{p-1+\kappa}{p^{2}}\quad \Longleftrightarrow \quad (5.9)\quad a <\frac {n-2}{4(n-1)} \cdot \frac{p-1+\kappa}{p^{2}}.$$ Now the assertion follows immediately from Theorem A. \end{proof}
\begin{theorem}
\label{T2} Let $(M^n, g)$, $n\geq 3$, be a complete non-compact, simply connected, locally conformal flat Riemannian manifold with scalar curvature $R < 0\, ,$ a.e. and Ricci curvature satisfying
\begin{equation}
\begin{array}{lll}
\mbox{Ric}(x)  \geq  a R(x)%
\end{array}
\label{Rs}
\end{equation}%
for all $x\in M,$ where $a $ is a constant such that
\begin{equation}
\begin{array}{l}
a <\frac {n-2}{n-1} \cdot \frac{ q-1+\kappa +b }{q^{2}},\text{ }%
\end{array}
\label{1.7}
\end{equation}%
in which $p>1\, ,$ and $\kappa$ is as in $(5.7)\, .$

Let $u\in C^{3}\left( M\right) $ be a strongly $p$-harmonic function with
finite $q$-energy $E_{q}\left( u\right) <\infty .$ \newline
Then the conclusions $(I)$ and $(II)$ as in Theorem B hold.
\end{theorem}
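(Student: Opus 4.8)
The plan is to deduce this result as a direct application of Theorem B, by verifying that each of its hypotheses is met under the stated assumptions. The whole argument parallels the proof of the preceding theorem, replacing Theorem A by Theorem B and the exponent $p$ by $q$ in the relevant smallness bound.

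First I would invoke the fact recalled at the beginning of this section: a locally conformally flat manifold with $R < 0$ a.e. supports the weighted Poincar\'{e} inequality (\ref{weighted-P}), that is, $(P_\rho)$ with the explicit weight $\rho = -\frac{n-2}{4(n-1)}R$. Since $R < 0$ a.e., this weight is strictly positive a.e., so $(P_\rho)$ is a genuine weighted Poincar\'{e} inequality and Theorem B is applicable with this choice of $\rho$. Next I would match the two Ricci conditions. Writing $-\tau\rho = \frac{n-2}{4(n-1)}\tau R$, the lower bound $\mbox{Ric} \geq -\tau\rho$ required by Theorem B becomes literally the same as the stated bound $\mbox{Ric} \geq aR$ once one sets
\[
a = \frac{n-2}{4(n-1)}\tau \qquad\Longleftrightarrow\qquad \tau = \frac{4(n-1)}{n-2}a.
\]
Because the two right-hand sides coincide as functions on $M$, the equivalence holds with no subtlety about the sign of $R$ entering the inequality direction.

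It then remains to check that the smallness condition on $a$ matches the smallness condition on $\tau$ demanded by Theorem B. As $\frac{n-2}{4(n-1)} > 0$, multiplying Theorem B's bound $\tau < \frac{4(q-1+\kappa+b)}{q^2}$ by this positive factor and using $a = \frac{n-2}{4(n-1)}\tau$ gives
\[
a < \frac{n-2}{4(n-1)} \cdot \frac{4(q-1+\kappa+b)}{q^2} = \frac{n-2}{n-1}\cdot\frac{q-1+\kappa+b}{q^2},
\]
which is precisely the stated hypothesis on $a$; here $\kappa$ and $b$ are the same quantities defined in Theorem B. With both hypotheses of Theorem B thus verified, its conclusions $(I)$ and $(II)$ apply verbatim to any strongly $p$-harmonic $u \in C^3(M)$ with finite $q$-energy $E_q(u) < \infty$. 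Since the entire argument is a translation of constants, there is no genuine analytic obstacle; the only point requiring care is confirming $\rho > 0$, which is exactly where the hypothesis $R < 0$ enters, simultaneously legitimizing the weighted Poincar\'{e} inequality and the sign bookkeeping in $a = \frac{n-2}{4(n-1)}\tau$.
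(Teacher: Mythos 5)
Your proposal is correct and follows essentially the same route as the paper: both identify the weight $\rho = -\frac{n-2}{4(n-1)}R$ from the weighted Poincar\'e inequality (\ref{weighted-P}), translate the Ricci bound via $a = \frac{n-2}{4(n-1)}\tau$, check that the smallness condition on $a$ is exactly Theorem B's condition on $\tau$ after the factor of $4$ cancels, and then quote Theorem B. The only difference is that you spell out the positivity of $\rho$ and the constant bookkeeping slightly more explicitly than the paper, which simply says the argument is as in the preceding theorem.
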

\begin{proof}
Arguing as before, the inequalities $(5.10)$ and $(5.11)$ are equivalent to the inequalities $(5.5)$ and $(5.6)$  respectively, and the assertion follows immediately from Theorem B.
\end{proof}

\section{Topology of LCF Riemannian manifolds}
According to the vanishing theorem in Sect.4, we can study the topology at infinity of LCF manifolds.
\begin{theorem}
\label{one-end-Ric}
 Let $(M^n, g)$, $n \geq 3$,  be a complete, simply connected,    locally conformally flat Riemannian manifold.
Then there exists a constant $C>0$ such that if
\begin{equation}
\int_M |\mbox{Ric}|^{\frac{n}{2}}dv< C, \label{Lp-Ric2}
\end{equation}
then $M$ has only one end.
\end{theorem}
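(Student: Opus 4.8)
The plan is to deduce the topological statement from the vanishing of $L^2$ harmonic $1$-forms already supplied by Theorem \ref{thm-Ric}, combined with Li--Tam's theory relating bounded harmonic functions of finite energy to the ends of $M$. I would take $C$ to be exactly the constant furnished by Theorem \ref{thm-Ric} in the case $p=1$. Since $R^2\leq n|\mbox{Ric}|^2$, the hypothesis (\ref{Lp-Ric2}) gives $\int_M|R|^{\frac n2}\,dv<\infty$, so the $L^2$-Sobolev inequality (\ref{finite-scalar}) holds and yields the uniform volume lower bound (\ref{v-growth}). This lower bound shows that $M$ has infinite volume, hence is non-compact and possesses at least one end; moreover each end has infinite volume and, since the Sobolev inequality descends to each end, every end of $M$ is non-parabolic.

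With this preparation, applying Theorem \ref{thm-Ric} with $p=1$ gives $H^1(L^2(M))=\{0\}$, that is, $M$ carries no nontrivial $L^2$ harmonic $1$-form.

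Next I would argue by contradiction. Suppose $M$ had at least two ends. Because every end is non-parabolic, Li--Tam's theory of harmonic functions produces a nonconstant bounded harmonic function $u$ on $M$ with finite Dirichlet integral $\int_M|\nabla u|^2\,dv<\infty$ (the dimension of the space of such functions is at least the number of non-parabolic ends). Setting $\omega=du$, harmonicity of $u$ makes $\omega$ closed, $d\omega=0$, and co-closed, $\delta\omega=\Delta u=0$, hence a harmonic $1$-form; and $\int_M|\omega|^2\,dv=\int_M|\nabla u|^2\,dv<\infty$ places $\omega$ in $H^1(L^2(M))$. Since $u$ is nonconstant, $\omega\neq 0$, contradicting $H^1(L^2(M))=\{0\}$. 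Therefore $M$ has at most one end, and combined with the existence of at least one end we conclude that $M$ has exactly one end.

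The main obstacle is organizing the Li--Tam machinery correctly: the implication ``two ends $\Rightarrow$ a nonconstant bounded harmonic function of finite energy'' genuinely requires all ends to be non-parabolic, since a purely parabolic end contributes no such function. This is exactly why the Sobolev inequality (\ref{finite-scalar}) and the resulting Euclidean-type volume growth (\ref{v-growth}) are indispensable, and why the simple connectivity hypothesis (entering through the sharp Yamabe constant behind (\ref{Yamabe})) cannot be dropped. Once non-parabolicity of every end is secured, converting the harmonic function into an $L^2$ harmonic $1$-form and reaching the contradiction is routine.
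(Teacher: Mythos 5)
Your argument is correct and follows essentially the same route as the paper: invoke Theorem \ref{thm-Ric} with $p=1$ to get $H^1(L^2(M))=\{0\}$, use the bound $R^2\leq n|\mbox{Ric}|^2$ and the resulting Sobolev inequality to conclude that every end is non-parabolic, and then apply Li--Tam's theory to turn a hypothetical second end into a nonconstant finite-energy harmonic function whose differential would be a nontrivial $L^2$ harmonic $1$-form. The only cosmetic difference is that you phrase the last step as a contradiction, whereas the paper states directly that the number of non-parabolic ends is bounded by the dimension of the space of harmonic functions with finite Dirichlet integral.
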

\begin{proof}
By the hypothesis, it follows from Theorem \ref{thm-Ric} that $H^1(L^2(M))=\{0\}$.
The assumption (\ref{Lp-Ric2}) implies that the following Sobolev inequality
\begin{equation*}
C_s \big(\int_M |f|^{\frac{2n}{n-2}}dv \big)^{\frac{n-2}{n}}  \leq  \int_M |\nabla f|^2dv, \ \ \forall f \in C^\infty_0(M)
\end{equation*}
holds for some $C_s>0$. Hence $M$ has infinite volume. According to Corollary 4 of \cite{LW1},
each end of $M$ is non-parabolic. By the important result in \cite{LT}, the number of non-parabolic ends of $M$ is at most the dimension of the space of  harmonic functions with finite Dirichlet integral.
 Observe that if $f$ is a harmonic function with finite Dirichlet integral then its exterior $df$ is an $L^2$ harmonic $1$-form. Therefore,  $M$ has only one end.
 \end{proof}

Considering the case of $p=1$ in Theorem \ref{thm-global-pinching}, using an analogous method as above, we  have
\begin{theorem}
\label{one-end-E}
 Let $(M^n, g)$, $n\geq 3$, be a complete non-compact, simply connected, locally conformally flat Riemannian manifold with $R \geq 0$. Assume that
 \begin{equation}
\Big(\int_M |E|^{\frac{n}{2}}dv\Big)^{\frac{2}{n}} < C(n), \label{gap2}
\end{equation}
where $C(n)=\frac{(n-2)\sqrt{n}}{|n-2|\sqrt{n-1}}  \min \Big\{\frac{n}{n-1}, \frac{4(n-1)}{n(n-2)}\Big\} Q(\mathbb{S}^n)$, then $M$ has only one non-parabolic end.
\end{theorem}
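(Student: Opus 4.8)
The plan is to mirror the proof of Theorem \ref{one-end-Ric}, merely swapping the input vanishing theorem: in place of Theorem \ref{thm-Ric} (which rests on an $L^{n/2}$ bound on $\mbox{Ric}$) I would invoke the $p=1$ case of Theorem \ref{thm-global-pinching}. The first step is purely bookkeeping: one checks that the constant $C(n)$ in the hypothesis (\ref{gap2}) is exactly the constant $C(p)$ of Theorem \ref{thm-global-pinching} evaluated at $p=1$. Indeed, for $p=1$ one has $K_1=\frac{1}{n-1}$, so $1+K_1=\frac{n}{n-1}$, while $\frac{4p(n-p)}{n(n-2)}=\frac{4(n-1)}{n(n-2)}$ and $p|n-2p|=n-2$; substituting these into $C(p)$ reproduces $C(n)$ term for term. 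Since $n\geq 3$ guarantees $p=1\neq\frac{n}{2}$, Theorem \ref{thm-global-pinching} applies and its ``in particular'' clause yields $H^1(L^2(M))=\{0\}$.

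With the vanishing in hand, I would turn to the structure of ends via Li--Tam theory. By \cite{LT}, the number of non-parabolic ends of $M$ is bounded above by the dimension of the space of harmonic functions with finite Dirichlet integral. The key bridge, exactly as in Theorem \ref{one-end-Ric}, is that every harmonic function $f$ with finite Dirichlet integral produces an element of $H^1(L^2(M))$: the $1$-form $df$ is closed because $d^2=0$, it is co-closed because $f$ is harmonic, and $\int_M|df|^2\,dv=\int_M|\nabla f|^2\,dv<\infty$. Hence $df\in H^1(L^2(M))=\{0\}$, which forces $df=0$ and $f\equiv\text{const}$. Thus the space of harmonic functions with finite Dirichlet integral collapses to the constants, so it is one-dimensional, and $M$ can have at most one non-parabolic end.

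The point I would emphasize, and the reason this conclusion is genuinely weaker than that of Theorem \ref{one-end-Ric}, is that here I cannot upgrade ``at most one non-parabolic end'' to ``only one end.'' In Theorem \ref{one-end-Ric} the hypothesis $\int_M|\mbox{Ric}|^{n/2}\,dv<\infty$ forces the global $L^2$-Sobolev inequality (\ref{finite-scalar}), hence the uniform volume lower bound (\ref{v-growth}) and infinite total volume, so that by Corollary 4 of \cite{LW1} \emph{every} end is non-parabolic. Under the present hypotheses (only $R\geq 0$ together with the small-$\|E\|_{L^{n/2}}$ bound) no such global Sobolev inequality is available, so parabolic ends cannot be ruled out and the statement must be phrased for non-parabolic ends alone. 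Apart from this loss, the argument is essentially a verbatim transcription of Theorem \ref{one-end-Ric}, so I expect no serious technical obstacle once the constant-matching of the first step is verified; that verification is the only place where any real computation enters.
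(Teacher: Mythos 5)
Your argument is correct and coincides with the paper's intended proof: the paper gives no separate argument for this theorem, saying only that it follows by applying the $p=1$ case of Theorem \ref{thm-global-pinching} and then repeating the Li--Tam argument of Theorem \ref{one-end-Ric}, which is exactly what you carry out (and your verification that $C(n)=C(1)$, using $K_1=\tfrac{1}{n-1}$, is the only computation needed). Your closing observation --- that the conclusion weakens to ``at most one non-parabolic end'' because the hypotheses here furnish no global $L^2$-Sobolev inequality and hence no way to exclude parabolic ends --- correctly makes explicit a point the paper leaves unstated.
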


\begin{remark}
In $\cite{Li}$, H.Z. Lin proved a  one-end theorem for LCF manifolds by assuming that $R \leq 0$ and $(\int_M |E|^{n}dv)^{\frac{2}{n}} < C(n)$ for some explicit constant $C(n)>0$.

\end{remark}

From Theorem \ref{thm-pinching} and the Sobolev inequality (\ref{R-negative1}), we have the following one end theorem under pointwise condition.
\begin{theorem}
Let $(M^n, g)$, $n\geq 4$, be a complete, simply connected, locally conformally flat Riemannian manifold with $R \leq 0$.  Suppose  that
\begin{equation}
\mbox{Ric}(x) \geq \frac{n-2}{4(n-1)} R(x)
\end{equation}
for all $x \in M$.  Then   $M$ has only one end.
\end{theorem}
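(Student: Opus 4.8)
The plan is to reduce the statement to the $p=1$ instance of Theorem~\ref{thm-pinching} and then run the same Li--Tam end-counting argument used in the proof of Theorem~\ref{one-end-Ric}. The first observation is that the pointwise hypothesis $\mbox{Ric}(x) \geq \frac{n-2}{4(n-1)} R(x)$ is exactly the Ricci lower bound $(\ref{Ric-lower})$ specialized to $p=1$: substituting $p=1$ into $\frac{(n-2)^2 - 4p(p-1)}{4p(n-1)(n-2p)}$ gives $\frac{(n-2)^2}{4(n-1)(n-2)} = \frac{n-2}{4(n-1)}$, and since $n\geq 4$ we have $1 < [\frac{n}{2}]$, so the range restriction $1 \leq p < [\frac{n}{2}]$ is met. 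Hence Theorem~\ref{thm-pinching} applies with $p=1$ and yields $H^1(L^2(M)) = \{0\}$.

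Next I would exploit the sign condition $R \leq 0$ to control the geometry at infinity. By $(\ref{R-negative1})$ the manifold supports the Sobolev inequality $Q(\mathbb{S}^n)\big(\int_M f^{\frac{2n}{n-2}}dv\big)^{\frac{n-2}{n}} \leq \int_M |\nabla f|^2 dv$, which, as recorded in Section~2, forces the uniform Euclidean volume lower bound $(\ref{v-growth})$, namely $\mbox{vol}(B_x(\rho)) \geq C\rho^n$. In particular every end of $M$ has infinite volume, and by Corollary~4 of \cite{LW1} (exactly as in Theorem~\ref{one-end-Ric}) each end of $M$ is then non-parabolic.

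Finally, I would invoke the Li--Tam theory \cite{LT}: the number of non-parabolic ends of $M$ is bounded above by the dimension of the space of harmonic functions with finite Dirichlet integral. If $f$ is such a function, then $df$ is an $L^2$ harmonic $1$-form, so $df \in H^1(L^2(M)) = \{0\}$ by the first step; hence $df = 0$ and $f$ is constant. Thus the relevant space is one-dimensional, so $M$ has at most one non-parabolic end. Combined with the fact that every end is non-parabolic, this gives exactly one end.

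The argument is essentially a template copy of Theorem~\ref{one-end-Ric}, with the $L^{n/2}$-integral hypothesis on $\mbox{Ric}$ there replaced here by the pointwise hypothesis together with $R \leq 0$. The only point demanding genuine care is the middle step: verifying that the Sobolev inequality $(\ref{R-negative1})$ really does make every end non-parabolic, i.e.\ correctly importing the volume-growth-to-non-parabolicity implication of \cite{LW1} and matching it with the Li--Tam bound so that ``at most one non-parabolic end'' upgrades to ``at most one end.'' The curvature/vanishing input $H^1(L^2(M)) = \{0\}$ is immediate from Theorem~\ref{thm-pinching}, so no new analytic estimate is required.
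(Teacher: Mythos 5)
Your proposal is correct and follows essentially the same route as the paper: both rest on the observation that the hypothesis is exactly the $p=1$ case of Theorem \ref{thm-pinching}, giving $H^1(L^2(M))=\{0\}$, and then count ends via Li--Tam theory. The only cosmetic difference is that the paper argues by contradiction, citing the construction of a nonconstant bounded finite-energy harmonic function from two ends, whereas you run the direct dimension count and spell out the non-parabolicity of every end via the Sobolev inequality coming from $R\leq 0$ — a detail the paper leaves implicit.
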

\begin{proof}
Suppose contrary, there were at least two ends, then by the method
in [27, p.681-683], there would exist a nonconstant bounded harmonic function $f$
with finite energy on $M$. Hence $df$ would be a nonconstant $L^2$ harmonic $1$-form on $M$. That is,
 $H^1(L^2(M))\neq \{0\}$, contradicting Theorem \ref{thm-pinching} in which $p=1$.
\end{proof}

\begin{remark}
In $\cite{LW2}$, Li-Wang proved that for a complete, simply connected, LCF manifold $M^n$ $(n\geq 4)$ with  $R \leq 0$, if the Ricci curvature $\mbox{Ric} \geq \frac{1}{4} R$ and the scalar curvature satisfies some decay condition, then either  $M$ has only one end, or   $M= \mathbb{R}\times N$ with a warped product metric for some compact manifold $N$.
\end{remark}

\noindent Yuxin Dong\\
 School of Mathematical Science, Fudan University, Shanghai, 200433,  China.\\
E-mail: yxdong@fudan.edu.cn\\

\noindent Hezi Lin\\
 School of Mathematics and Computer Science,
 Fujian Normal University, Fuzhou,  350108,
 China.  E-mail: lhz1@fjnu.edu.cn\\

\noindent Shihshu Walter Wei\\
Department of Mathematics, University of Oklahoma, Norman, Oklahoma 73019-0315, USA. E-mail: wwei@ou.edu

\end{document}